\theoremstyle{plain}
\newtheorem{theorem}{Theorem}
\newtheorem{proposition}{Proposition}
\newtheorem{lemma}{Lemma}
\newtheorem{corollary}{Corollary}
\theoremstyle{remark}
\newtheorem{definition}{Definition}
\newtheorem{remark}{Remark}
\newtheorem{example}{Example}
\newtheoremstyle{erw}							
  {3.25ex plus1ex minus.2ex}						
  {}									
  {}									
  {}									
  {\bfseries\sffamily}							
  {}									
{\newline}								
  {\thmnumber{#1}\thmnote{ #2'.}\vspace{1.5ex plus.2ex}}		
\theoremstyle{erw}
\newtheorem{singletheorem}{Theorem}
\newcommand{\SO}{\mathrm{SO}}
\newcommand{\so}{\mathfrak{so}}
\newcommand{\Sym}{\mathrm{Sym}}
\newcommand{\g}[2]{\langle #1,#2\rangle}
\renewcommand{\gg}{\g{\,\cdot\,}{\,\cdot\,}}
\newcommand{\R}{\mathrm{I\!R}}
\newcommand{\C}{\mathbb{C}}
\newcommand{\dt}{\mathrm{dt}}
\newcommand{\Id}{\mathrm{Id}}
\newcommand{\End}{\mathrm{End}}
\newcommand{\trace}{\mathrm{tr}}
\newcommand{\Hom}{\mathrm{Hom}}
\newcommand{\rmS}{\mathrm{S}}
\newcommand{\scrC}{\mathcal{C}}
\newcommand{\scrN}{\mathcal{N}}
\newcommand{\scrR}{\mathcal{R}}
\newcommand{\scrS}{\mathcal{S}}
\renewcommand{\sl}{\mathfrak{sl}}
\newcommand{\ric}{\mathrm{ric}}
\newcommand{\Ric}{\mathrm{Ric}}
\renewcommand{\d}{\mathrm{d}}
\newcommand{\scal}{\mathrm{s}}
\title{The two-jet of the curvature tensor of an Einstein manifold}
\author{Tillmann Jentsch}
\date{}
\begin{document}\sloppy

\maketitle
\begin{abstract}
The two-jet $(R_p\nabla R_p,\nabla^{2)} R_p)$ of the curvature tensor at some
point $p$ of a pseudo-Riemannian manifold is called Einstein if the Ricci tensor is a multiple of the metric tensor $g_p$
and  additionally its first two covariant derivatives vanish at $p$\,. Following the
Jet Isomorphism Theorem of pseudo-Riemannian geometry, we derive necessary
and sufficient conditions for the Einstein property in terms of the symmetrization of  $(R_p,\nabla R_p,\nabla^{2)} R_p)$ (i.e. in terms of the
Jacobi operator and its first two covariant derivatives along arbitrary geodesics emanating from $p$).  A central role is played by the Weitzenböck
formula for the Laplacian $\d^\nabla \delta^\nabla + \delta^\nabla \d^\nabla$
acting on sections of the vector bundle of algebraic curvature tensors. As an application, we study linear Jacobi relations of
order two on Einstein manifolds.
\end{abstract}

\section{Introduction}
\label{se:introduction}
Let $(M,\gg)$ be a pseudo-Riemannian manifold and $p\in M$\,. We denote by $\nabla$ and $R$ the Levi-Civita connection and the curvature tensor. 
It was shown in~\cite{NO} that the condition $\nabla^{k)} R = 0$ already implies that $M$ is locally symmetric.
In order to find less restrictive conditions on the curvature tensor, recall that the Jacobi operator along some geodesic $\gamma$ is defined by $\scrR_\gamma(t;x,y) := R(x,\dot\gamma(t),\dot\gamma(t),y)$ for all $t\in \R$ and $x,y\in T_{\gamma(t)}M$\,. 
Further, let $\Sym^kV$ denote the  $k$-th symmetric power of an arbitrary vector space $V$\,. Thus the Jacobi operator is a section of $\Sym^2 TM^*$ along $\gamma$\,. 

Furthermore, let $\nabla^{k)}R$ denote the $k$-th covariant derivative of the curvature tensor. Then the $k$-th covariant derivative of $\scrR_{\gamma}$ is given by
\begin{equation}\label{eq:k-ter_Jacobi_on_a_geodesic}
\scrR^{k)}_\gamma(x,y) := \frac{\nabla^{k}}{\dt^k} \scrR_\gamma(x,y) = \nabla^{k)}_{\dot \gamma,\cdots,\dot \gamma}R(\dot \gamma,x ,y ,\dot \gamma)
\end{equation}
which is a section of $\Sym^2 TM^*$ along $\gamma$ again.
However, because of~\eqref{eq:k-ter_Jacobi_on_a_geodesic}, we can define the
$k$-th covariant derivative of the Jacobi operator without reference to a special geodesic as follows:

\bigskip
\begin{definition}
Let $\scrR^{k)}$ denote the section of $\Sym^{k+2}TM^*\otimes \Sym^2 TM^*$ which
is uniquely characterized  by
\begin{equation}\label{eq:k-ter_Jacobi}
\scrR^{k)}(\underbrace{\xi,\cdots,\xi}_{k+2};x,y) := \nabla^{k)}_{\xi,\cdots,\xi} R(x,\xi,\xi,y)
\end{equation}
 for all $p\in M$ and $\xi,x,y\in T_pM$ via the polarization formula. We will call $\scrR^{k)}$ the
symmetrized $k$-th covariant derivative of the curvature tensor. \end{definition}

\subsection{The Einstein condition on the symmetrized two-jet of the  curvature tensor}
\label{se:Einstein_condition}
Since we are interested in local properties of the metric, from now on we assume that $M$ is a
vector space $V$ and $p$ is the origin. Thus $\gg := g|_0$ equips $V$ with the structure of a pseudo-euclidean vector
space. The {\em $k$-jet} associated with $g$ is the collection $(R|_0,\nabla R|_0,\cdots,\nabla^{k)}R|_0)$ obtained by evaluating the
curvature tensor and its first $k$ covariant derivatives at the origin. 
A $k$-jet can also be defined in a purely algebraic manner, see Def.~\ref{de:algebraic_two-jet}. 
In the following an (algebraic) $k$-jet will be denoted by $(R,\nabla
R,\cdots,\nabla^{k)}R)$ (the symbol $|_0$ for evaluation at the origin gets dropped
only in order to keep the notation more easy.)

\bigskip
\begin{definition}
We say that an algebraic $k$-jet $(R,\nabla R,\cdots,\nabla^{k)}R)$  is Einstein if the
Ricci tensor is a multiple of $\gg$ and the vanishing conditions $\nabla\ric =
0,\cdots,\nabla^{k)}\ric = 0$ together hold.
\end{definition}

It is known that $\nabla\ric = 0$ implies that $\nabla R$ is totally trace-free, see Lemma~\ref{le:hirachy}.

Let $\alpha\in \Sym^k V^*$ and $\beta\in \Sym^\ell V^*$\,. 
We define the symmetric product $\alpha \odot \beta \in \Sym^{k + \ell} V^*$ 
as the product of the corresponding polynomial functions, i.e.
\begin{equation}\label{eq:symmetric_product}
\alpha \odot \beta (\xi,\cdots,\xi) := \alpha(\xi)\beta(\xi)\;.
\end{equation}

\bigskip
\begin{theorem}\label{th:main}
An algebraic two-jet $(R,\nabla R,\nabla^{2)} R)$ is Einstein if and only if
$\ric \in \R\,\gg$\,, $\nabla \ric = 0$ and
\begin{equation}\label{eq:traceless_part_alternativ}
\scrR^{2)} - \frac{1}{n + 4}\scrR*\scrR \odot \gg\;.
\end{equation} 
is totally trace-free. Here $\scrR*\scrR$ denotes the algebraic Jacobi operator
associated with the algebraic curvature tensor $R*R$ (the latter will be defined
in~\eqref{eq:def_of_R*R} below.) 
\end{theorem}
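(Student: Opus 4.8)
The plan is to deduce the statement from one universal identity relating the symmetrized second covariant derivative $\scrR^{2)}$ to the unsymmetrized two‑jet $\nabla^{2)}R$, and then to contract it with the metric. Since $\ric\in\R\,\gg$ and $\nabla\ric=0$ occur on both sides, the content is the equivalence — under these two standing assumptions — of $\nabla^{2)}\ric=0$ with total trace‑freeness of $\scrR^{2)}-\tfrac1{n+4}\,(\scrR*\scrR)\odot\gg$. I would first record two preliminary facts. By Lemma~\ref{le:hirachy}, $\nabla\ric=0$ forces $\nabla R$, hence also $\scrR^{1)}$, to be totally trace‑free, so no $\scrR^{1)}$‑traces can enter the computation. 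And when $\ric|_0\in\R\,\gg$ and $\nabla\ric|_0=0$, the Ricci identity makes $\nabla^{2)}\ric$ symmetric in its two differentiation slots, because the curvature operator annihilates the parallel tensor $g$; thus $\nabla^{2)}\ric$ is genuinely a bi‑symmetric element of $\Sym^2V^*\otimes\Sym^2V^*$, whose vanishing is detected by its $\gg$‑contractions modulo the contracted second Bianchi identity.

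Next I would produce the universal decomposition. Polarizing the defining identity~\eqref{eq:k-ter_Jacobi} and commuting covariant derivatives via the Ricci identity expresses $\nabla^{2)}R$ as a universal linear function of $\scrR^{2)}$ plus a universal term quadratic in $R$; a weight count in normal coordinates (equivalently, the Jet Isomorphism Theorem at order two, which guarantees that $\scrR^{2)}$ together with $R,\nabla R$ determines the two‑jet) shows that there is no $\scrR^{1)}$‑contribution at this order, so the quadratic correction is of ``$R*R$'' type. To pin it down I would invoke the Weitzenböck formula for $\d^\nabla\delta^\nabla+\delta^\nabla\d^\nabla$ on the bundle of algebraic curvature tensors: the second Bianchi identity gives $\d^\nabla R=0$ and, together with $\nabla\ric=0$ and the contracted second Bianchi identity, $\delta^\nabla R=0$; hence $\Delta R$ vanishes and the rough Laplacian $\nabla^*\nabla R$ at the origin equals minus the Weitzenböck curvature term. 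This identifies the curvature‑quadratic part of $\nabla^{2)}R$, after contracting out the two differentiation slots, with $R*R$, and hence in the $\Sym^2$‑Jacobi slot with the algebraic Jacobi operator $\scrR*\scrR$.

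Now I would apply to this decomposition the $\gg$‑contractions that compute $\nabla^{2)}\ric$. Because $\ric|_0\in\R\,\gg$, the curvature tensor kills the pure‑trace parts that would otherwise obstruct the computation, and because $\scrR^{1)}$ is totally trace‑free all $\scrR^{1)}$‑terms drop out; each contraction entering $\nabla^{2)}\ric$ therefore becomes the corresponding contraction of $\scrR^{2)}$ minus a fixed multiple of that of $(\scrR*\scrR)\odot\gg$. The constant $\tfrac1{n+4}$ is then forced by the elementary fact that, for $B\in\Sym^2V^*$, the relevant $\gg$‑trace of $B\odot\gg$ is $(n+4)\,B$ up to a multiple of $(\tr B)\,\gg$ — that is, $n+4$ is the eigenvalue of the trace operator on the summand $\Sym^2V^*\odot\gg\subseteq\Sym^4V^*$, with $n=\dim V$ (the overall rational factor being fixed by the paper's normalization of traces). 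It follows that $\nabla^{2)}\ric=0$ holds if and only if every $\gg$‑contraction of $\scrR^{2)}-\tfrac1{n+4}(\scrR*\scrR)\odot\gg$ vanishes, i.e. if and only if this tensor is totally trace‑free; both implications of the theorem are obtained at once.

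I expect the main obstacle to be the second step: writing out the universal decomposition of $\nabla^{2)}R$ explicitly and checking, via the Weitzenböck formula, that after contraction the curvature‑quadratic part is precisely $\tfrac1{n+4}(\scrR*\scrR)\odot\gg$ with the stated constant. This requires careful bookkeeping of the Ricci‑identity commutators, of the (contracted) second Bianchi identities, and of the symmetrization combinatorics on $\Sym^4V^*\otimes\Sym^2V^*$, and it is exactly here that the hypotheses $\ric\in\R\,\gg$ (making $R$ annihilate the offending pure‑trace tensors) and total trace‑freeness of $\nabla R$ are indispensable for the identity to close up. The remaining bookkeeping — matching the independent components of the bi‑symmetric tensor $\nabla^{2)}\ric$ with the independent $\gg$‑traces of a tensor in $\Sym^4V^*\otimes\Sym^2V^*$ carrying the symmetries of a second curvature derivative — is then routine, for instance via the decomposition of $\Sym^4V^*\otimes\Sym^2V^*$ into $\rmO(V)$‑irreducibles.
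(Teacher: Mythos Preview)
Your outline shares the right ingredients with the paper --- the Weitzenb\"ock formula, trace computations, the $\rmO(V)$-decomposition of $\nabla^{2)}\ric$, and the constant $n+4$ --- but there is a genuine error in your second step. You argue that under the standing hypotheses $\ric\in\R\,\gg$ and $\nabla\ric=0$ one has $(\d^\nabla\delta^\nabla+\delta^\nabla\d^\nabla)R=0$, and hence $\nabla^*\nabla R=-\tfrac12\,R*R$. This is not correct: while $\d^\nabla R=0$ and $\delta^\nabla R|_0=0$, the quantity $\d^\nabla\delta^\nabla R|_0$ involves the \emph{derivative} of $\delta^\nabla R$, which by~\eqref{eq:hirachy_1} is built from $\nabla^{2)}\ric|_0$ --- precisely what you are trying to prove vanishes in the backward implication. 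The correct universal identity is~\eqref{eq:Weitzenböck_spezial}, which retains the $\nabla^{2)}\ric$ term:
\[
\nabla^*\nabla R \;=\; \tfrac14\,\begin{array}{|c|c|}\cline{1-2}1&3\\\cline{1-2}2&4\\\cline{1-2}\end{array}\;\nabla^{2)}_{x_1,x_3}\ric(x_2,x_4)\;-\;\tfrac12\,R*R\,.
\]
It is exactly this version that closes the argument for the $[2,2]$-component of $\nabla^{2)}\ric$ in the backward direction; your simplified version would make that step circular.

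Relatedly, the paper does not package things as a single ``universal decomposition of $\nabla^{2)}R$ in terms of $\scrR^{2)}$'' and then contract. It works with the Young-symmetrized two-jet $\tilde\nabla^{2)}\tilde R\in\scrC_2$ and computes its two essential traces directly via explicit tableau calculations (Propositions~\ref{p:Young1} and~\ref{p:rough_Laplacian_of_the_associated_two-jet}); the constant $n+4$ comes out of the parallel computation for the embedding $\iota(R*R)$ in Corollary~\ref{co:Ricci_tensor}, not from a one-line eigenvalue argument on $\Sym^2V^*\odot\gg$. In the backward direction the three $\sl(V)$-components of $\nabla^{2)}\ric\in\Sym^2V^*\otimes\Sym^2V^*$ are killed separately, and with \emph{different} traces: the $\{1,3\}$-trace (evaluated on diagonal arguments) handles the $[4]$ and $[3,1]$ pieces via Proposition~\ref{p:Young1}, while the $\{5,6\}$-trace (the associated rough Laplacian) handles the $[2,2]$ piece via Proposition~\ref{p:rough_Laplacian_of_the_associated_two-jet} together with~\eqref{eq:Weitzenböck_spezial}. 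Calling this ``routine bookkeeping'' underestimates the work and obscures that the Weitzenb\"ock formula re-enters here with its $\nabla^{2)}\ric$ term intact.
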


\subsection{Linear  Jacobi relations}
\bigskip
\begin{definition}\label{de:linear_Jacobi_relation}
A linear Jacobi relation of order $k$ is a dependency relation
\begin{equation}\label{eq:linear_Jacobi_relation}
\scrR^{k + 1)} = c_{k - 1}\, \gg\, \odot\, \scrR^{k - 1)}\, +  c_{k - 3}\, \gg\, \odot\, \gg\,\odot\, \scrR^{k - 3)} + \cdots
 \end{equation}
in the space of sections of the vector bundle $\Sym^{k + 3}TM^*\otimes\Sym^2TM^*$\,.
\end{definition}

In other words, this means that there exist constants $c_{k-1},c_{k-3},\cdots$ such that 
\begin{equation}\label{eq:linear_Jacobi_relation_for_geodesics}
\scrR^{k+1)}_{\gamma} = c_{k-1}\,\g{\dot \gamma}{\dot \gamma}\, \scrR^{k-1)}_\gamma + c_{k-3}\,\g{\dot \gamma}{\dot \gamma}^2\, \scrR^{k-3)}_\gamma + \cdots\;.
\end{equation}
for all geodesics $\gamma$\,. 


For example, a linear Jacobi relation of order zero means that $\nabla R = 0$\,, i.e. $M$ is locally
symmetric. Further, examples with (minimal) linear Jacobi relations of order  two and four are known, see~\cite{A,J2}. Among them there are certain homogeneous Einstein spaces. 
On the other hand it is an open question whether there exist pseudo-Riemannian spaces with a (minimal) linear Jacobi relation of uneven order at all.
Using Theorem~\ref{th:main} in combination with~\eqref{eq:Weitzenböck_spezial} below, we will prove the following result on linear Jacobi relations of order one:

\bigskip
\begin{corollary}\label{co:main}
Suppose for a pseudo-Riemannian Einstein manifold there exists a constant $c$ such that
\begin{equation}\label{eq:Relation_of_order_two}
\scrR^{2)}_\gamma = c\, \g{\dot \gamma}{\dot \gamma}\, \scrR^{0)}_\gamma
\end{equation}
for all geodesics $\gamma$\,. Then the curvature tensor of $M$ satisfies
\begin{equation}\label{eq:main}
\nabla^*\nabla R = - \frac{(n+4)c}{2} R\;.
\end{equation}
In particular, the constant $c$ is negative on a compact Riemannian manifold.
\end{corollary}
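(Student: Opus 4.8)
The plan is to feed the hypothesis~\eqref{eq:Relation_of_order_two} into Theorem~\ref{th:main}. Since $M$ is Einstein by assumption, we already know $\ric\in\R\,\gg$ and $\nabla\ric=0$, so the content of the theorem is that
\[
\scrR^{2)} - \frac{1}{n+4}\,\scrR*\scrR\odot\gg
\]
is totally trace-free. On the other hand, the relation~\eqref{eq:Relation_of_order_two} says precisely that $\scrR^{2)} = c\,\gg\odot\scrR^{0)}$ as a section of $\Sym^4TM^*\otimes\Sym^2TM^*$ (using the polarization dictionary between the geodesic formulation~\eqref{eq:linear_Jacobi_relation_for_geodesics} and the tensorial one, with $\scrR^{0)}=\scrR$ the Jacobi operator of $R$ itself). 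Substituting, the trace-free condition becomes
\[
c\,\gg\odot\scrR - \frac{1}{n+4}\,\scrR*\scrR\odot\gg \;=\; \gg\odot\!\left(c\,\scrR - \frac{1}{n+4}\,\scrR*\scrR\right)
\]
is totally trace-free. The first step, then, is to analyze when a section of the form $\gg\odot\Theta$, with $\Theta\in\Sym^2TM^*\otimes\Sym^2TM^*$ arising from an algebraic curvature tensor, is totally trace-free: taking the relevant metric trace of $\gg\odot\Theta$ should, up to a nonzero universal constant depending on $n$, recover $\Theta$ itself plus lower-order trace terms, forcing $\Theta=0$. I expect the trace of $\gg\odot\Theta$ to vanish identically only if $\Theta$ vanishes, since $\gg$ is nondegenerate; this is the one genuinely computational lemma and I would isolate it, being careful that ``totally trace-free'' here refers to all contractions in the $\Sym^4$ and $\Sym^2$ slots simultaneously.

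Granting that, we conclude $c\,\scrR = \frac{1}{n+4}\,\scrR*\scrR$ as algebraic Jacobi operators, hence (since passing from a curvature tensor to its Jacobi operator is injective on the relevant space, by the polarization identity that defines $\scrR$) we get $c\,R = \frac{1}{n+4}\,R*R$ as algebraic curvature tensors at every point. The next step is to bring in the Weitzenböck formula~\eqref{eq:Weitzenböck_spezial} referenced in the introduction: this formula expresses the rough Laplacian $\nabla^*\nabla R$ of the curvature tensor in terms of $\d^\nabla\delta^\nabla + \delta^\nabla\d^\nabla$ acting on $R$ together with a curvature term that, on an Einstein manifold, should be exactly (a constant multiple of) $R*R$ plus a multiple of $R$. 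For an Einstein manifold the second Bianchi identity gives $\delta^\nabla R = 0$ (equivalently the contracted Bianchi identity with $\nabla\ric=0$), and $\d^\nabla R=0$ is the differential Bianchi identity, so the Laplacian term $\d^\nabla\delta^\nabla R + \delta^\nabla\d^\nabla R$ drops out entirely. What remains of the Weitzenböck formula is an identity of the shape $\nabla^*\nabla R = \lambda\,R*R + \mu\,R$ for universal constants $\lambda,\mu$ (with $\mu$ involving the Einstein constant), and substituting $R*R = c(n+4)R$ collapses the right-hand side to a scalar multiple of $R$.

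The final step is bookkeeping of the constants to land exactly on $\nabla^*\nabla R = -\frac{(n+4)c}{2}R$; here I would normalize $\scrR*\scrR$ and $R*R$ via~\eqref{eq:def_of_R*R}, track the Einstein-constant contribution, and check it against the known trace-algebra constants in the Weitzenböck identity~\eqref{eq:Weitzenböck_spezial}. For the last sentence, on a compact Riemannian manifold one integrates $\langle\nabla^*\nabla R,R\rangle$ over $M$: the left side gives $\int_M\|\nabla R\|^2\,\mathrm{dvol}\ge 0$ while the right side is $-\frac{(n+4)c}{2}\int_M\|R\|^2\,\mathrm{dvol}$, and since $\|R\|^2>0$ somewhere (else $M$ is flat and the relation is vacuous or forces $c$ arbitrary — a case to handle separately or exclude) we get $c\le 0$, and $c=0$ only in the flat case. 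The main obstacle I anticipate is not conceptual but the constant-chasing in the Weitzenböck step: making sure the curvature term there really is $\propto R*R$ with no extra Weyl-type pieces surviving on an Einstein manifold, and getting the sign and the factor $(n+4)/2$ to match, which requires committing to the precise normalizations of $R*R$ and of the Weitzenböck formula as set up earlier in the paper.
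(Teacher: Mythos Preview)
Your proposal is correct and follows essentially the same route as the paper: combine Theorem~\ref{th:main} with the Einstein Weitzenb\"ock identity and an injectivity argument for the map $\Theta\mapsto\gg\odot\Theta$. Two clarifications will streamline your write-up: the paper dispatches your ``key lemma'' (that $\gg\odot\Theta$ totally trace-free forces $\Theta=0$) by Schur's Lemma---since $\scrC_0=[0]\oplus[2]\oplus[2,2]$ shares no $\SO(V)$-summand with $[4,2]$, the embedding $\iota$ of~\eqref{eq:iota} misses the trace-free part of $\scrC_2$---which is cleaner than a direct trace computation; and on an Einstein manifold the Weitzenb\"ock formula~\eqref{eq:Weitzenböck_spezial_fuer_Einstein} reads simply $\nabla^*\nabla R=-\tfrac{1}{2}R*R$ with no separate $\mu R$ term (the Ricci contributions in~\eqref{eq:Weitzenböck} vanish because $R_{x,y}\cdot\gg=0$), so substituting $R*R=c(n+4)R$ gives the claimed constant immediately.
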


\section{Symmetrized/anti-symmetrized covariant derivatives of the curvature tensor}
\label{se:reformulation}
Instead of working directly with the symmetrization of the covariant derivatives of the curvature tensor defined
in~\eqref{eq:k-ter_Jacobi}, there is no flaw in passing to the
symmetrized/anti-symmetrized covariant derivative, since this contains exactly
the same information. In fact, this has even some advantages for the calculations. The construction is as follows:

Consider the Young diagram of shape $(k+2,2)$ given by 
\begin{equation}\label{eq:Young_symmetrizer}
\begin{array}{|c|c|c|c|c|}
\cline{1-5}
1 & 3 & 5 & \cdots & k+4\\
\cline{1-5}
2 & 4 & \multicolumn{2}{c}{\;\;\;}\\
\cline{1-2}
\end{array}
\end{equation}
for any $k\geq 0$ \,. The corresponding Young symmetrizer defines an endomorphism on covariant
tensors of degree $k + 4$ which can roughly be described as a specific
symmetrization/anti-symmetrization operation (see~\cite[Ch.~6]{FH}). The {\em Schur functor} associated with~\eqref{eq:Young_symmetrizer} assigns to each
vector space $V$ the image $\scrC_k(V)$ of this endomorphism in $\bigotimes^{k+4}V^*$\,. The complexification $\scrC_k(V)\otimes\C$ is known to be an irreducible representation of $\sl(V,\C)$ of highest weight $(k+2) L_1 + 2 L_2$ in the notation from~\cite{FH}, called Weyls construction. Once the vector space $V$ is kept fix, we write $\scrC_k$ instead of
$\scrC_k(V)$\,. For a proof of the following Lemma see for example~\cite[Sec.~3]{FKWC}:

\bigskip
\begin{lemma}\label{le:scrC}
The linear spaces $\scrC_k\subset \bigotimes^{k+4} V^*$ can also be described as follows:
\begin{align}\label{eq:first_Bianchi_identity}
& \scrC_0 := \{R\in \Sym^2(\Lambda^2 V^*)|R\ \text{satisfies the first Bianchi-identity}\}\;,\\
\label{eq:second_Bianchi_identity}
& \scrC_1 := \{\nabla R\in V^*\otimes\scrC_0|\nabla R\ \text{satisfies the second Bianchi identity}\}\;,\\
\label{eq:trivial_Ricci_identity}
&\scrC_k := \Sym^k V^*\otimes \scrC_0 \cap \Sym^{k-1} V^*\otimes \scrC_1\
\text{for $k\geq 2$}\;.
\end{align}
\end{lemma}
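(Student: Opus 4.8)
The plan is to establish the three equivalences in Lemma~\ref{le:scrC} by comparing the abstract Schur-functor definition of $\scrC_k$ (as the image of the Young symmetrizer for the diagram~\eqref{eq:Young_symmetrizer}) with the concrete Bianchi-type descriptions on the right-hand side. First I would treat the base case $k=0$. The Young diagram of shape $(2,2)$ has a Young symmetrizer whose image in $\bigotimes^4 V^*$ is, by Weyl's construction (see~\cite[Ch.~6]{FH}), the irreducible $\GL(V)$-module of highest weight $2L_1+2L_2$. The standard fact I would invoke is that this space coincides with the space of algebraic curvature tensors: tensors in $\Sym^2(\Lambda^2 V^*)$ satisfying the first Bianchi identity. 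One direction is immediate because the row-antisymmetrization part of the Young symmetrizer forces antisymmetry in the pairs $(1,2)$ and $(3,4)$, the column structure forces the pair-symmetry, and the residual symmetrization kills exactly the ``cyclic'' component, which is the content of the first Bianchi identity; the reverse containment follows by a dimension count, or by checking that the Young symmetrizer acts as (a nonzero multiple of) the identity on the space on the right. This is classical and I would simply cite~\cite[Sec.~3]{FKWC} and~\cite[Ch.~6]{FH}.

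For $k=1$ I would argue that $\scrC_1$, being the image of the Young symmetrizer for shape $(3,2)$, sits inside $V^*\otimes\scrC_0$ (the first tensor slot corresponds to the box $3$ added to the first row, which is only symmetrized, not antisymmetrized, with the boxes of the $(2,2)$-subdiagram). Within $V^*\otimes\scrC_0$ the Young symmetrizer additionally imposes the symmetrization over the first row's three entries, and the claim is that, combined with the curvature symmetries already present, this is equivalent to the second Bianchi identity $\nabla_{[a}R_{bc]de}=0$. Again one inclusion is a direct consequence of the skew-symmetrization built into the symmetrizer, and the other follows by irreducibility: $V^*\otimes\scrC_0$ decomposes into $\scrC_1$ plus complementary irreducibles, and the second-Bianchi condition cuts out precisely the summand of highest weight $3L_1+2L_2$ (the remaining pieces, of highest weights $2L_1+2L_2+L_3$ etc., are exactly the ``non-Bianchi'' directions). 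I would make this decomposition explicit enough to see that the second Bianchi identity is not automatically satisfied on all of $V^*\otimes\scrC_0$ but does hold on the $\scrC_1$-summand, which pins it down.

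The case $k\geq 2$ I would handle by a stabilization argument. The Young diagram of shape $(k+2,2)$ is obtained from that of shape $(k+1,2)$, and inductively from shape $(3,2)$, by appending boxes to the end of the first row; since those boxes are only ever symmetrized among themselves and with the rest of the first row, the associated Schur functor satisfies $\scrC_k\subseteq \Sym^{k-j}V^*\otimes\scrC_j$ for every $0\le j\le k$, and in particular $\scrC_k\subseteq (\Sym^k V^*\otimes\scrC_0)\cap(\Sym^{k-1}V^*\otimes\scrC_1)$. For the reverse inclusion, one takes a tensor $T$ lying in both $\Sym^k V^*\otimes\scrC_0$ and $\Sym^{k-1}V^*\otimes\scrC_1$: the first condition says $T$ is totally symmetric in its first $k$ slots and is an algebraic curvature tensor in the last four, while the second says that, grouping the first $k-1$ slots symmetrically, the remaining $V^*\otimes\scrC_0$-part obeys the second Bianchi identity. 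One must show that the full symmetrizer for shape $(k+2,2)$ acts as a nonzero scalar on such $T$, equivalently that $T$ has no component in any irreducible summand of $\bigotimes^{k+4}V^*$ other than the $(k+2,2)$-type. The branching/decomposition bookkeeping here — verifying that the two intersecting conditions are jointly strong enough to eliminate all other Littlewood–Richardson components of $\Sym^k V^*\otimes\Lambda^2 V^*\otimes\Lambda^2 V^*$ — is the main obstacle, and I would either carry it out via explicit Young-symmetrizer manipulations or defer to the representation-theoretic computation in~\cite[Sec.~3]{FKWC}. Finally I would remark that all statements are about $\GL(V)$-modules over $V^*$, so the passage between $V$ and its complexification $V\otimes\C$ (used to invoke Weyl's construction) is harmless since the symmetrizers have rational coefficients.
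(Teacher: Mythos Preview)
Your outline is sound and in fact goes considerably further than the paper: the paper does not prove Lemma~\ref{le:scrC} at all but simply refers the reader to \cite[Sec.~3]{FKWC}. Your strategy --- base cases $k=0,1$ via the explicit action of the Young symmetrizer plus irreducibility, then $k\geq 2$ by checking that the two intersecting conditions kill every Littlewood--Richardson summand of $\Sym^kV^*\otimes\scrC_0$ other than the one of type $(k+2,2)$ --- is exactly the approach taken in that reference, so there is nothing to compare beyond noting that you are sketching what \cite{FKWC} carries out in detail. The one place where you are honest about a gap (``the main obstacle'') is precisely the branching computation for $k\geq 2$; that step is genuine work, not a formality, and if you intend a self-contained proof you should either do the Littlewood--Richardson bookkeeping explicitly or, as the paper does, simply cite \cite[Sec.~3]{FKWC}.
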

Thus $\scrC_0$ is and $\scrC_1$ are the spaces of (algebraic) curvature tensors and their covariant derivatives, respectively. More generally, $\scrC_k$ is the space of (algebraic) linear $k$-jets of curvature tensors on $V$\,, see also~\cite[Def.~4]{J1}. In fact, according to the Jet Isomorphism Theorem of pseudo-Riemannian geometry (see~\cite{J1}), the prefix ``algebraic'' can actually be omitted: let some pseudo-Riemannian metric 
on the differentiable manifold $V$ be given whose curvature tensor satisfies
$R|_0 =0, \ldots, \nabla^{k-1)}R|_0 = 0$ (where again $|_0$  denotes evaluation at the origin). Then $\nabla^{k)}R|_0\in\scrC_k$ by means of the Ricci identity. Further, any element of $\scrC_k$ is obtained in this way.

The {\em Hook Length Formula}~\cite[4.12]{FH} in
combination with~\cite[Lemma~4.26]{FH}  implies that 
\begin{equation}\label{eq:eigenvalue}
\begin{array}{|c|c|c|c|c|}
\cline{1-5}
1 & 3 & 5 & \cdots & k+4\\
\cline{1-5}
2 & 4 & \multicolumn{2}{c}{\;\;\;}\\
\cline{1-2}
\end{array}\; \nabla^{k)}_{x_4,\cdots,x_{k+4}}R(x_1,x_2,x_3,x_4) = 2(k + 3)(k + 2)k!\; \nabla^{k)}_{x_4,\cdots,x_{k+4}}R(x_1,x_2,x_3,x_4)
\end{equation}
for all $\nabla^{k)}R\in\scrC_k$ and $k\geq 0$\,. For example, the factor $ 2(k + 3)(k + 2)k!$  occurring on the right hand side of~\eqref{eq:eigenvalue} is $12$ for $k=0$\,, $24$ for $k=1$ and $80$ for $k=2$\,, respectively. It is not very difficult to check~\eqref{eq:eigenvalue} from~\eqref{eq:first_Bianchi_identity}-\eqref{eq:trivial_Ricci_identity} via explicit calculations, see~\cite[Sec.~3]{J1}. In fact, in case of $R=0$ the calculations given in Sec.~\ref{se:some_traces} below reduce to the previously mentioned ones.

\bigskip
\begin{example}
Let $\nabla^{k - 2\,\ell)}R\in\scrC_{k -2\,\ell}$ and consider the tensor $\scrR^{k - 2\,\ell)}\odot \gg^{\ell}\in \Sym^{k+2}V^*\otimes\Sym^2$\,, characterized by
\begin{equation}
\scrR^{k - 2\,\ell)}\odot \gg^{\ell}(x,x,x,\cdots,x;y,y) :=  \scrR^{k)}(x;y,y)\g{x}{x}^\ell\ (\text{see also}~\eqref{eq:symmetric_product})\;.
\end{equation}
These are the tensors which occur in~\eqref{eq:linear_Jacobi_relation}.
Further, recall that the Kulkarni-Nomizu product is the linear map 
\[
\owedge:\Sym^{k + 2} V^*\otimes \Sym^2 V^*\to \scrC_k
\]
 given by
\begin{equation}\label{eq:Kulkarni1} \owedge(h_1\otimes h_2)(x_1,\cdots,x_k;x_a,x_b,x_c,x_d) :=  \left\lbrace \begin{array}{l}
\ h_1(x_1,\cdots,x_{k},x_a,x_c)h_2(x_b,x_d)\\ - h_1(x_1,\cdots,x_k,x_b,x_c)h_2(x_a,x_d)\\ 
- h_1(x_1,\cdots,x_{k},x_a,x_d)h_2(x_b,x_c)\\ + h_1(x_1,\cdots,x_k,x_b,x_d)h_2(x_a,x_c)\;.
\end{array}\right . 
\end{equation}
Then, by the very definition of the Young symmetrizer associated with~\eqref{eq:Young_symmetrizer}
\begin{equation}\label{eq:example_for_symmetrized-antisymmetrized}
\begin{array}{c}
\begin{array}{|c|c|c|c|c|}
\cline{1-5}
1 & 3 & 5 & \cdots & k+4\\
\cline{1-5}
2 & 4 & \multicolumn{2}{c}{\;\;\;} \\
\cline{1-2}
\end{array}\; \nabla^{k - 2\,\ell)}R\otimes \gg^\ell(x_1,\cdots,x_{k+4})
= - 2(k+2)!\owedge \scrR^{k-2\,\ell}\odot\gg^\ell(x_1,\cdots,x_{k+4})
\end{array}\;.
\end{equation}\end{example}

\bigskip
\begin{lemma}\label{le:linear_jacobi_relation_alternativ}
A pseudo-Riemannian manifold satisfies a linear Jacobi relation~\eqref{eq:linear_Jacobi_relation} if and only if
\begin{equation}
\begin{array}{l}
\label{eq:linear_Jacobi_relation_alternativ}
\begin{array}{|c|c|c|c|c|}
\cline{1-4}
1 & 2 & \cdots & k+3\\
\cline{1-4}
a & b & \multicolumn{2}{c}{\;\;\;}\\
\cline{1-2}
\end{array}\; \nabla^{(k+1)}_{x_3,\cdots,x_{k+3}}R(x_1,x_a,x_b,x_2)\\
= a_{k - 1}\,\begin{array}{|c|c|c|c|c|}
\cline{1-4}
1 & 2 & \cdots & k+3\\
\cline{1-4}
a & b & \multicolumn{2}{c}{\;\;\;}\\
\cline{1-2}
\end{array}\; \nabla^{(k-1)}_{x_3,\cdots,x_{k+1}}R(x_1,x_a,x_b,x_2)\,\g{x_{k+2}}{x_{k+3}}\\
+ a_{k - 3}\,\begin{array}{|c|c|c|c|c|}
\cline{1-4}
1 & 2 & \cdots & k+3\\
\cline{1-4}
a & b & \multicolumn{2}{c}{\;\;\;}\\
\cline{1-2}
\end{array}\; \nabla^{(k-3)}_{x_3,\cdots,x_{k-1}}R(x_1,x_a,x_b,x_2)\,\g{x_{k}}{x_{k+1}}\g{x_{k+2}}{x_{k+3}}  + \cdots \;.
\end{array}
\end{equation}
\end{lemma}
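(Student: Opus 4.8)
The plan is to regard the identity~\eqref{eq:example_for_symmetrized-antisymmetrized} as a dictionary between the two pictures and to transport the relation across it in both directions. Throughout put $m:=k+1$, let $\scrJ\colon\scrC_m\to\Sym^{m+2}V^*\otimes\Sym^2V^*$ denote the Jacobi‑operator map $\nabla^{m)}R\mapsto\scrR^{m)}$ obtained from~\eqref{eq:k-ter_Jacobi} by polarisation, and write $W_m:=\scrJ(\scrC_m)$ for its image. Since both~\eqref{eq:linear_Jacobi_relation} and~\eqref{eq:linear_Jacobi_relation_alternativ} are identities between sections of natural tensor bundles, by polarisation it suffices to establish, at each point, the corresponding purely algebraic equivalence among the evaluated tensors $\nabla^{j)}R$ and $\gg$. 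The feature of~\eqref{eq:example_for_symmetrized-antisymmetrized} that makes everything work is that the conversion factor $-2(m+2)!$ does not depend on $\ell$.

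For the direction~\eqref{eq:linear_Jacobi_relation}$\Rightarrow$\eqref{eq:linear_Jacobi_relation_alternativ} I would apply the Kulkarni--Nomizu product $\owedge$ to both sides of~\eqref{eq:linear_Jacobi_relation} and invoke~\eqref{eq:example_for_symmetrized-antisymmetrized} termwise, which rewrites $\owedge\big(\scrR^{m-2\ell)}\odot\gg^\ell\big)$ as $-\tfrac{1}{2(m+2)!}$ times the Young‑symmetrizer expression occurring in~\eqref{eq:linear_Jacobi_relation_alternativ}. Since this prefactor is the same in every term it cancels, leaving exactly~\eqref{eq:linear_Jacobi_relation_alternativ} with $a_{m-2\ell}=c_{m-2\ell}$. (The tableau in~\eqref{eq:linear_Jacobi_relation_alternativ} differs from the one in~\eqref{eq:Young_symmetrizer} only by a permutation of the tensor slots together with a single use of the antisymmetry of $R$, hence defines the same relation once the arguments are free.)

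For the converse, running the same computation backwards turns~\eqref{eq:linear_Jacobi_relation_alternativ} into $\owedge(E)=0$ with $E:=\scrR^{m)}-\sum_{\ell\ge1}a_{m-2\ell}\,\scrR^{m-2\ell)}\odot\gg^\ell$, and it remains to deduce $E=0$. I would first observe that $\owedge$ is injective on $W_m$: combining~\eqref{eq:eigenvalue} (so that the Young symmetrizer of~\eqref{eq:Young_symmetrizer} acts on $\scrC_m$ as the nonzero scalar $2(m+3)(m+2)m!$) with~\eqref{eq:example_for_symmetrized-antisymmetrized} at $\ell=0$ gives $\owedge\circ\scrJ=-\tfrac{m+3}{m+1}\,\Id_{\scrC_m}$, so $\scrJ\colon\scrC_m\to W_m$ is an isomorphism and $\owedge|_{W_m}$ is a nonzero multiple of its inverse. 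Next I would show $E\in W_m$: since $\scrC_m$ is $\GL(V)$‑irreducible and, by the Pieri rule, occurs with multiplicity one in $\Sym^{m+2}V^*\otimes\Sym^2V^*$, while the symmetrization map into $\Sym^{m+3}V^*\otimes V^*$ vanishes on the $\scrC_m$‑summand and is injective on the sum of the other two, the submodule $W_m$ equals the kernel of that symmetrization, i.e.\ $\{h\mid h(\xi,\dots,\xi;\xi,y)=0\text{ for all }\xi,y\}$; and each summand of $E$ lies in this set because $\scrR^{m)}(\xi,\dots,\xi;\xi,y)=\nabla^{m)}_{\xi,\dots,\xi}R(\xi,\xi,\xi,y)=0$ and $(\scrR^{m-2\ell)}\odot\gg^\ell)(\xi,\dots,\xi;\xi,y)=\scrR^{m-2\ell)}(\xi,\dots,\xi;\xi,y)\,\g{\xi}{\xi}^\ell=0$, both by the antisymmetry of $R$. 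With $E\in W_m$ and $\owedge|_{W_m}$ injective, $\owedge(E)=0$ forces $E=0$, which is~\eqref{eq:linear_Jacobi_relation} with $c_{m-2\ell}=a_{m-2\ell}$.

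The forward implication is essentially bookkeeping. The step I expect to be the real obstacle is the inclusion $\scrR^{m-2\ell)}\odot\gg^\ell\in W_m$ in the converse --- equivalently, the fact that $\owedge$ loses no information on the span of the tensors occurring in~\eqref{eq:linear_Jacobi_relation}. This is the one point that genuinely uses representation theory (the Pieri decomposition of $\Sym^{m+2}V^*\otimes\Sym^2V^*$ together with the eigenvalue identity~\eqref{eq:eigenvalue}); everything else reduces to linear algebra via the dictionary~\eqref{eq:example_for_symmetrized-antisymmetrized}.
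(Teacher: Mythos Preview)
Your proof is correct and follows essentially the same route as the paper: both directions go through the dictionary~\eqref{eq:example_for_symmetrized-antisymmetrized}, and the converse hinges on the injectivity of the Kulkarni--Nomizu product on the subspace $\{h\in\Sym^{m+2}V^*\otimes\Sym^2V^*\mid h(\xi,\ldots,\xi;\xi,\,\cdot\,)=0\}$, which is exactly the paper's $\scrN_{m+2}$. The only real difference is that the paper outsources this injectivity to~\cite[Eq.~(48)]{J1} (where $\owedge$ is identified with the linearised jet isomorphism), whereas you supply a self-contained argument by showing $\owedge\circ\scrJ=-\tfrac{m+3}{m+1}\Id_{\scrC_m}$ from~\eqref{eq:eigenvalue} and~\eqref{eq:example_for_symmetrized-antisymmetrized}, and then identifying $\scrN_{m+2}$ with the image $W_m=\scrJ(\scrC_m)$ via the Pieri decomposition.
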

\begin{proof}
Set
\begin{equation}\label{eq:formal_Jacobi_operator}
\scrN_k := \{h\in\Sym^kV^*\otimes\Sym^2V^*\mid\forall\xi \in V:h(\underbrace{\xi,\cdots,\xi}_{k+1},\,\cdot\, ) = 0\}
\end{equation} Because of the first Bianchi identity, $\scrR^{k - 2\,\ell)}\odot\gg^\ell \in\scrN_{k + 2}$\,. 
Furthermore, the Kulkarni-Nomizu product $\owedge:\scrN_{k+2}\to \scrC_k$ is a non-zero multiple of the  linearization of some isomorphism of
affine jet-spaces appearing in the Jet Isomorphism Theorem of pseudo-Riemannian geometry, see~\cite[Eq.~(48)]{J1}. In particular, the Kulkarni-Nomizu product has a trivial kernel on $\scrN_{k+2}$ for any $k\geq 0$\,.
The assertion follows from~\eqref{eq:example_for_symmetrized-antisymmetrized}.
\end{proof}

\section{Some identities for algebraic curvature tensors}
For simplicity, we restrict our considerations to the Riemannian case. The
generalization to the pseudo-Riemannian case is straightforward. Let $V$ be a Euclidean vector space and $\{e_1,\cdots,e_n\}$ be an orthonormal basis. Given a curvature tensor and some $A\in \bigotimes^k V^*$ we define
\begin{equation}\label{eq:def_of_R*R}
R*A(x_1,\cdots,x_k) := - \sum_{i=1}^k\sum_{j=1}^n R_{x_i,e_j}\cdot A(x_1,\cdots,\overbrace{e_j}^{i},\cdots, x_k)
\end{equation}
Here $R_{x,y}\in \so(V)$ means the curvature endomorphism $R(x,y,\, \cdot\, )$ for all $x,y\in V$\,. Further, the dot $\cdot$ means the usual action of skew-adjoint endomorphisms on arbitrary tensors through algebraic derivation, i.e.
\[
- B\cdot A (x_1,\cdots,x_k) = \sum_{i=1}^k A(x_1,\cdots, B\, x_i,\cdots, x_k)
\] 
for any endomorphism $A$ and $B\in \bigotimes^k V^*$\,. By definition, the tensor $R*A$ has the same symmetries as $A$\,. In particular, if $A$ is a curvature tensor, then  $R*A$ is a curvature tensor again. 

\bigskip
\begin{remark}
A straightforward calculation shows that $R*A$ is the zeroth order term $\Gamma$ which appears
in the definition of the Lichnerowicz Laplacian on covariant tensor fields of valence $k$\,, see~\cite[1.143]{Be}. In~\cite{SW} the operator $R*$ is denoted by $q(R)$\,, see Formula (3.9) there. For example, $R*\alpha (x) = \alpha(\Ric(x))$ for every 1-form $\alpha$\,, e.g. $R* = n\, \Id$ on the vector bundle of 1-forms on the $n$-dimensional round sphere of radius one. 
\end{remark}

\bigskip
\begin{lemma}\label{le:Jacobioperator_of_R*R}
Let two algebraic curvature tensors $R$ and $R'$ be given. Then:
\begin{align}
\label{eq:R*R_alternativ}
& R*R'(x_1,x_2,x_3,x_4) = - \sum_{i=1}^n\left \lbrace 
\begin{array}{l}
2\,R_{x_1,e_i}\cdot R'(e_i,x_2,x_3,x_4) - 2\,R_{x_2,e_i}\cdot R'(e_i,x_1,x_3,x_4) \\
+ R'_{x_2,x_4}\cdot \ric(x_1,x_3) - R'_{x_2,x_3}\cdot \ric(x_1,x_4)\\
+ R'_{x_1,x_3}\cdot \ric(x_2,x_4) - R'_{x_1,x_4}\cdot\ric(x_2,x_3) \\
\end{array}\right .\\
\label{eq:Jacobioperator_of_R*R_alternativ}
& R*R'(x,y,x,y) = - 2\, R'_{x,y}\cdot\ric(x,y) - 4\,\sum_{i=1}^n R_{x,e_i} \cdot R'(e_i,y,x,y)\;,\\
\label{eq:Ricci_tensor_of_R*R}
& \sum_{i=1}^nR*R'(x,e_i,y,e_i) = - R*\ric'(x,y)\;,\\
\label{eq:ScalarCurvature_of_R*R}
& \sum_{i=1}^nR*\ric'(e_i,e_i) = 0\;.
\end{align}
\end{lemma}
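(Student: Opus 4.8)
The plan is to prove the four identities \eqref{eq:R*R_alternativ}--\eqref{eq:ScalarCurvature_of_R*R} in sequence, each one feeding into the next. For \eqref{eq:R*R_alternativ} I would start from the definition \eqref{eq:def_of_R*R} applied to the four-tensor $R'$: expanding $R*R'(x_1,x_2,x_3,x_4) = -\sum_{j}\bigl(R_{x_1,e_j}\cdot R'(e_j,x_2,x_3,x_4) + R_{x_2,e_j}\cdot R'(x_1,e_j,x_3,x_4) + R_{x_3,e_j}\cdot R'(x_1,x_2,e_j,x_4) + R_{x_4,e_j}\cdot R'(x_1,x_2,x_3,e_j)\bigr)$, where the dot acts on $R'$ as a derivation in all four slots. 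The idea is to move the two ``bad'' terms (those with $e_j$ in the third or fourth slot of $R'$) to the first two slots using the pair symmetry $R'(\,\cdot\,,\cdot\,,\cdot\,,\cdot\,)=R'(\cdot\,,\cdot\,,\cdot\,,\cdot\,)$ on the first/second pair versus third/fourth pair, together with the first Bianchi identity. The sums $\sum_j R_{x_3,e_j}\cdot R'(\ldots,e_j,\ldots)$ etc.\ contract an index of $R$ against an index of $R'$; the terms where the derivation hits the $e_j$ slot itself collapse to a Ricci contraction $\sum_j R'(R_{x_3}e_j,\ldots,e_j,\ldots)$, which by the standard curvature--Ricci contraction becomes a term of the form $R'_{\bullet,\bullet}\cdot\ric(\bullet,\bullet)$. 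Collecting everything and using the symmetries of $R$ and $R'$ should yield exactly the six-term right-hand side; I expect this bookkeeping to be the main obstacle, since one must carefully track signs and make sure every contraction of the form $\sum_j R(x,e_j,\cdot,\cdot)R'(e_j,\cdot,\cdot,\cdot)$ is either kept as a ``$2\,R_{x,e_i}\cdot R'$'' term or converted to a Ricci term without double-counting.

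Once \eqref{eq:R*R_alternativ} is established, \eqref{eq:Jacobioperator_of_R*R_alternativ} is just the specialization $x_1=x_3=x$, $x_2=x_4=y$: the two derivation terms $2R_{x,e_i}\cdot R'(e_i,y,x,y)$ and $-2R_{y,e_i}\cdot R'(e_i,x,x,y)$ combine — using that the derivation $R_{y,e_i}\cdot$ acting on $R'(e_i,x,x,y)$ and reindexing via the $R'$-symmetries matches $R_{x,e_i}\cdot R'(e_i,y,x,y)$ up to sign — into the single coefficient $-4$, while the four Ricci terms collapse pairwise (by $\ric(x,y)$ symmetry and the antisymmetries of $R'_{\cdot,\cdot}$) into $-2R'_{x,y}\cdot\ric(x,y)$. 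For \eqref{eq:Ricci_tensor_of_R*R} I would take \eqref{eq:R*R_alternativ} with $x_2=x_4=e_i$ (or more directly contract the second and fourth slots), sum over $i$, and recognize each resulting term: $\sum_i\sum_j R_{x,e_j}\cdot R'(e_j,e_i,y,e_i)$ is, after contracting the $e_i$'s in $R'$, a $R_{x,e_j}\cdot\ric'(e_j,y)$ term, i.e.\ part of $R*\ric'(x,y)$; similarly $\sum_i R_{e_i,e_i}\cdot R'(\ldots)=0$ and the remaining pieces reorganize into the second contribution to $R*\ric'$. The Bianchi identity for $R'$ is what makes the two surviving terms equal so they add up to the full $R*\ric'$.

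Finally \eqref{eq:ScalarCurvature_of_R*R} follows by tracing \eqref{eq:Ricci_tensor_of_R*R} once more: $\sum_i R*R'(e_i,e_j,e_i,e_j)$ summed over both indices gives $-\sum_j R*\ric'(e_j,e_j)$, but the left-hand side is the Ricci-type contraction of the curvature tensor $R*R'$, and tracing a Ricci tensor gives scalar curvature; alternatively, expand $\sum_i R*\ric'(e_i,e_i)$ directly from the definition \eqref{eq:def_of_R*R} applied to the symmetric $2$-tensor $\ric'$: it equals $-\sum_i\sum_j\bigl(2\,\ric'(R_{e_i,e_j}e_i,e_j)\bigr) = -2\sum_{i,j}R(e_i,e_j,e_i, \Ric' e_j)$-type expression, which vanishes because $R_{e_i,e_j}$ is skew-adjoint and $\ric'$ is symmetric (the summand is antisymmetric in $i\leftrightarrow$ the contracted index). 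The key point throughout is that $R*$ is a derivation, so it annihilates the ``constant'' invariant pairing; the hard part remains the sign-tracking in \eqref{eq:R*R_alternativ}, after which the remaining three identities are short formal consequences.
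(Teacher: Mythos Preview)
Your overall plan is reasonable and the treatments of \eqref{eq:Ricci_tensor_of_R*R} and \eqref{eq:ScalarCurvature_of_R*R} are fine. For \eqref{eq:R*R_alternativ} you take a different route from the paper: you expand the four-term definition directly and push the $e_j$'s from the third and fourth slots back to the first two via pair symmetry and Bianchi. The paper instead first writes down the diagonal value $R*R'(x,y,x,y)$ from the definition, then checks that the right-hand side of \eqref{eq:R*R_alternativ} has pair symmetry and agrees with $R*R'$ on the diagonal, and invokes the fact that an algebraic curvature tensor is determined by its sectional-curvature values. Your approach is more laborious but perfectly valid; the paper's is slicker but leans on that polarization principle.

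There is, however, a real gap in your derivation of \eqref{eq:Jacobioperator_of_R*R_alternativ}. When you specialize \eqref{eq:R*R_alternativ} to $x_1=x_3=x$, $x_2=x_4=y$, the four Ricci terms do \emph{not} collapse to $-2\,R'_{x,y}\cdot\ric(x,y)$; they cancel to zero, because $R'_{x,x}=R'_{y,y}=0$ and the two surviving terms $-R'_{y,x}\cdot\ric(x,y)$ and $-R'_{x,y}\cdot\ric(y,x)$ have opposite signs. Correspondingly, the two derivation sums $\sum_i R_{x,e_i}\cdot R'(e_i,y,x,y)$ and $\sum_i R_{y,e_i}\cdot R'(e_i,x,x,y)$ are \emph{not} related by a mere reindexing or sign flip; their sum is exactly $-R'_{x,y}\cdot\ric(x,y)$, and that is where the Ricci term in \eqref{eq:Jacobioperator_of_R*R_alternativ} actually originates. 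Establishing this requires a genuine argument: one expands the derivation, uses the first Bianchi identity for $R'$, and then shows that the quantities $\sum_i\g{R(e_i,x,y)}{R'(e_i,y,x)}$ and $\sum_i\g{R(e_i,x,y)}{R'(e_i,x,y)}$ are symmetric in $x\leftrightarrow y$ via the transpose trick $\sum_i\g{Ae_i}{Be_i}=\sum_i\g{A^*e_i}{B^*e_i}$. This is precisely the content of the paper's equation \eqref{eq:warum_R*R_nicht_so_einfach_hinzuschreiben_ist}, and it is the step your sketch glosses over.
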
 
\begin{proof}
For~\eqref{eq:R*R_alternativ}: Since $R'$ has the algebraic properties of a curvature tensor it follows immediately from~\eqref{eq:def_of_R*R} that 
\begin{equation}\label{eq:Jacobioperator_of_R*R}
R*R'(x,y,x,y) = - 2\, \sum_{i=1}^n R_{x,e_i} \cdot R'(e_i,y,x,y) + R_{y,e_i} \cdot R'(e_i,x,y,x)\;.
\end{equation}
Further, let us denote r.h.s. of this equation by $\tilde R(x_1,x_2,x_3,x_4)$\,. 
Thus it is easy to see that $\tilde R(x,y,y,x) = R*R'(x,y,y,x)$\,. Further, a straightforward calculation (using the first Bianchi identity for $R'$) shows that $\tilde R(x_1,x_2,x_3,x_4) = \tilde R(x_3,x_4,x_1,x_2)$ (pair symmetry). Thus~\eqref{eq:Jacobioperator_of_R*R} already implies~\eqref{eq:R*R_alternativ}.

For~\eqref{eq:Jacobioperator_of_R*R_alternativ}: we have
\begin{align}
& R_{x,e_i} \cdot R'(e_i,y,y,x) + R'(\Ric(x),y,y,x) - \g{R(e_i,x,x,)}{R'(e_i,y,y)} =\\
&   - \sum_{i=1}^n\underbrace{R'(e_i,R(x,e_i,y),y,x)}_{=\g{R'(x,y,e_i)}{R(e_i,x,y)}} + \underbrace{R'(e_i,y,R(x,e_i,y),x)}_{=\g{R'(e_i,y,x)}{R(e_i,x,y}} \stackrel{\text{1.~Bianchi}}{=}   \\
& - 2\,\sum_{i=1}^n  \g{R(e_i,x,y)}{R'(e_i,y,x)} + \sum_{i=1}^n \g{R(e_i,x,y)}{R'(e_i,x,y)}\;.
\end{align} 
Further, recall the formula
\[
\sum_{i=1}^n \g{A\,e_i}{B\, e_i} = \trace(B^*\circ A) = \trace(A\circ B^*) = \sum_{i=1}^n \g{A^*\,e_i}{B^*\, e_i}
\]
for any pair of endomorphisms $(A,B)$ on $V$\,. Here $A^*$ means the transpose endomorphism. Applying this formula to $A(u) := R(u,x,y)$ and  $B(u) := R'(u,y,x)$ and using that
$A^*(u) = R(u,y,x)$ and $B^*(u) = R'(u,x,y)$\,, we see that both $\g{R(e_i,x,y)}{R'(e_i,y,x)}$ and $\g{R(e_i,x,y)}{R'(e_i,x,y)}$ are symmetric in $\{x,y\}$\,. 
Hence, 
\begin{equation}\label{eq:warum_R*R_nicht_so_einfach_hinzuschreiben_ist}
\sum_{i=1}^n R_{x,e_i} \cdot R(e_i,y,y,x) - R_{y,e_i} \cdot R(e_i,x,x,y) =
\underbrace{R'(y,x,x,\Ric(y))- R'(x,y,y,\Ric(x))}_{= 2\,R'_{x,y}\cdot \ric(x,y)}\;.
\end{equation}
Therefore, by means of~\eqref{eq:Jacobioperator_of_R*R},
\begin{equation*}
R*R'(x,y,y,x) =  2\, R'_{x,y}\cdot \ric(x,y) - 4\, \sum_{i=1}^n R_{x,e_i} \cdot R'(e_i,y,y,x) 
\end{equation*}
Now~\eqref{eq:Jacobioperator_of_R*R} follows.

For~\eqref{eq:Ricci_tensor_of_R*R}, we can suppose that $x = y$\,.  Then the equation immediately follows from~\eqref{eq:Jacobioperator_of_R*R} and the 
fact that the trace commutes with the action of a skew-symmetric endomorphism. For~\eqref{eq:ScalarCurvature_of_R*R}, we have
\[
\sum_{i,j} R_{e_i,e_j}\cdot \ric(e_j,e_i) = - \sum_{i,j} R_{e_j,e_i}\cdot \ric(e_j,e_i) = - \sum_{i,j} R_{e_i,e_j}\cdot \ric(e_j,e_i)\;.
\]
The result follows.
\end{proof}
Recall that an algebraic curvature tensor $R$ on a Euclidean vector space
$(V,\gg)$ is called Einstein if $\ric = c\,\gg$ for some $c\in \R$. 

\bigskip
\begin{corollary}\label{co:Jacobioperator_of_R*R}
Suppose $R$ is Einstein. Then
\begin{equation}
R*R(x,y,y,x) = -4\,\sum_{i=1}^n R_{x,e_i} \cdot R(e_i,y,y,x) = -4\,\sum_{i=1}^n R_{y,e_i} \cdot R(e_i,x,x,y)\;.
\end{equation}
Further, $R*R$ has vanishing Ricci tensor.
\end{corollary}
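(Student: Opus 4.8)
The plan is to read everything off directly from Lemma~\ref{le:Jacobioperator_of_R*R} by specializing to $R' = R$ and using the Einstein hypothesis $\ric = c\,\gg$. The only auxiliary input needed is the elementary observation that a skew-adjoint endomorphism acts trivially on the metric tensor under the algebraic derivation action, i.e. $B\cdot\gg = 0$ for every $B\in\so(V)$ (immediate from $\g{Bu}{v}+\g{u}{Bv}=0$), together with the fact already recorded after~\eqref{eq:def_of_R*R} that $R*R$ inherits the algebraic symmetries of a curvature tensor.

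First I would substitute $R' = R$ into~\eqref{eq:Jacobioperator_of_R*R_alternativ}. Since $R_{x,y}\in\so(V)$ and $\ric = c\,\gg$, the term $R'_{x,y}\cdot\ric = c\,(R_{x,y}\cdot\gg)$ vanishes, leaving $R*R(x,y,x,y) = -4\sum_i R_{x,e_i}\cdot R(e_i,y,x,y)$. Rewriting both sides using the antisymmetry in the last two slots (which $R*R$ and each $R_{x,e_i}\cdot R$ share) converts this into the first claimed identity $R*R(x,y,y,x) = -4\sum_i R_{x,e_i}\cdot R(e_i,y,y,x)$. The second equality then follows by combining the pair symmetry $R*R(x,y,y,x) = R*R(y,x,x,y)$ with the first identity applied after interchanging $x$ and $y$ (alternatively, one may read it off the symmetric form~\eqref{eq:Jacobioperator_of_R*R}).

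For the last assertion I would invoke~\eqref{eq:Ricci_tensor_of_R*R} with $R' = R$, whose right-hand side becomes $-R*\ric(x,y) = -c\,R*\gg(x,y)$. By~\eqref{eq:def_of_R*R} the tensor $R*\gg$ is a sum of terms of the form $R_{x_i,e_j}\cdot\gg$, each of which is zero by the observation above; hence $R*\gg = 0$ and therefore $\sum_i R*R(x,e_i,y,e_i) = 0$ for all $x,y$, which says precisely that $R*R$ is Ricci-flat.

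No genuine obstacle is expected: the proof is a short specialization argument, and the only point requiring attention is careful bookkeeping of the sign changes produced by the curvature symmetries when passing between the slot patterns $(x,y,x,y)$ and $(x,y,y,x)$ and between $R(e_i,y,x,y)$ and $R(e_i,y,y,x)$, together with fixing the sign convention of the $\so(V)$-action so that $B\cdot\gg$ indeed comes out to zero.
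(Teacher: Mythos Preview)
Your proposal is correct and is exactly the intended argument: the corollary is stated in the paper without proof because it is a direct specialization of Lemma~\ref{le:Jacobioperator_of_R*R} to $R'=R$ under the Einstein hypothesis, and your handling of the sign/slot bookkeeping and of $R*\gg=0$ via $B\cdot\gg=0$ for $B\in\so(V)$ is precisely what is needed.
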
 

\subsection{The Weitzenb\"ock formula for the Laplacian on the vector bundle of algebraic curvature tensors}
\label{se:Weitzenböck}
In~\cite[Prop.~4.2]{Bou} there was shown a ``Weitzenböck formula''  for the Laplacian
$\d^\nabla\delta^\nabla + \delta^\nabla\d^\nabla$ acting on sections of $\Hom(\Lambda^2 T^*M,\Lambda^2T^*M)$\,, the vector bundle of endomorphisms of the
second exterior power of the cotangent bundle. In
Proposition~\ref{p:Weitzenböck} and Corollary~\ref{co:Weitzenböck_spezial} below we will rewrite this formula
for sections of the subbundle $\scrC_0(TM)\subset \Hom(\Lambda^2 T^*M,\Lambda^2T^*M)$\,, the vector bundle
of algebraic curvature tensors, and specifically for the curvature tensor itself. This will be the cornerstone for our further calculations.

\bigskip
\begin{definition}\label{de:algebraic_two-jet}
Let $(V,\gg)$ be a pseudo Euclidean vector space. 
\begin{enumerate}
\item
A triple $(R',\nabla R',\nabla^{2)}R')\in \scrC_0\oplus (V^*\otimes \scrC_0)\oplus (V^*\otimes V^* \otimes \scrC_0)$ is briefly called an (algebraic) two-jet if
\begin{align*}
& \text{the Ricci-identity:}\ \forall x,y\in V: \nabla^{2)}_{x,y}R - \nabla^{2)}_{y,x}R = R_{x,y} \cdot R\\
& \ \ \ \ \ \ \ \ \ \ \ \ \ \ \ \ \ \ \ \ \ \ \ \ \ \ \ \ \ \ \ \ \ \ \ \text{and} \\
& \text{the second Bianchi identity:}\ \nabla R\in\scrC_1\;, \nabla^{2)}R \in V^*\otimes \scrC_1
\end{align*}
together hold, see~\eqref{eq:second_Bianchi_identity}.
\item More generally, given some $R\in\scrC_0$ (see~\eqref{eq:first_Bianchi_identity}), a triple $(R',\nabla R',\nabla^{2)}R')\in \scrC_0\oplus (V^*\otimes \scrC_0)\oplus (V^*\otimes V^* \otimes \scrC_0)$ is called the (algebraic)
two-jet of a section of the vector bundle of algebraic curvature tensors if the Ricci-identity $\nabla^{2)}_{x,y}R' - \nabla^{2)}_{y,x}R' = R_{x,y} \cdot R'$ holds for all $x,y\in V$\,. 
\end{enumerate}
\end{definition} 
According to the Jet Isomorphism Theorem, a triple  $(R,\nabla R,\nabla^{2)}R)$ is an algebraic two-jet if and only if there exists some
pseudo-Riemannian metric on $V$ with $g_0 = \gg$ and such that $(R,\nabla
R,\nabla^{2)}R)$ is the two-jet of the curvature tensor evaluated at the origin, see Sec.~\ref{se:Einstein_condition}.
Similarly, an algebraic curvature tensor $R$ together with a triple  $(R',\nabla R',\nabla^{2)}R')$ defines the algebraic two-jet of a section of the vector bundle of algebraic curvature tensors
if and only if there exists some pseudo-Riemannian metric with $g_0 = \gg$ and whose curvature tensor at the origin is $R$ such that $(R',\nabla R',\nabla^{2)}R')$ is the two-jet of a section of the vector bundle of algebraic curvature tensors evaluated at the origin.

Further, recall that the divergence of $R$  and the exterior derivative of the Ricci tensor are defined as follows:
\begin{align}\label{eq:divergence_of_R}
& \delta^\nabla_x R(y,z) := - \sum_{i=1}^n \nabla_{e_i} R(e_i,x,y,z)\;,\\
& \d^\nabla \ric(x,y,z) := \nabla_x\ric (y,z) - \nabla_y\ric(x,z)
\end{align}
for all $x,y,z\in V$\,. Then the second Bianchi identity implies
\begin{equation}\label{eq:contracted_second_Bianchi_identity}
\delta^\nabla_z R(x,y) = \d^\nabla \ric(x,y,z)\;.
\end{equation}

Next we state the Weitzenböck formula for the Laplacian $\d^\nabla\delta^\nabla + \delta^\nabla\d^\nabla$ acting on the vector bundle of algebraic curvature tensors over some pseudo-Riemannian manifold, see also Remark~\ref{re:Weitzenböck} below.

\bigskip
\begin{proposition}\label{p:Weitzenböck}
Let an algebraic curvature tensor $R$ and the two-jet $(R',\nabla R',\nabla^{2)}R')$ of a section of the vector bundle of algebraic curvature tensors be given. Then for all $x_1,x_2,x_3,x_4\in V$\,:
\begin{equation}\label{eq:Weitzenböck}
(\d^\nabla\delta^\nabla + \delta^\nabla\d^\nabla) R'(x_1,x_2,x_3,x_4) = \left \lbrace\begin{array}{l}
\nabla^*\nabla R'(x_1,x_2,x_3,x_4) + \frac{1}{2}  R*R'(x_1,x_2,x_3,x_4)\\
+ \frac{1}{2} \left ( \begin{array}{cc} R'_{x_2,x_4}\cdot \ric(x_1,x_3) & - R'_{x_2,x_3}\cdot \ric(x_1,x_4)\\
+ R'_{x_1,x_3}\cdot \ric(x_2,x_4) & -  R'_{x_1,x_4}\cdot\ric(x_2,x_3)
\end{array} \right )
\end{array}\right .\;.
\end{equation}
\end{proposition}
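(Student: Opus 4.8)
The plan is to derive the formula~\eqref{eq:Weitzenböck} as a direct specialization of the classical Weitzenböck formula for the Laplacian $\d^\nabla\delta^\nabla + \delta^\nabla\d^\nabla$ acting on $\Hom(\Lambda^2T^*M,\Lambda^2T^*M)$ established in~\cite[Prop.~4.2]{Bou}. First I would recall that the general Weitzenböck identity for a tensor bundle with the induced connection has the shape $\d^\nabla\delta^\nabla + \delta^\nabla\d^\nabla = \nabla^*\nabla + \mathcal{W}$, where the curvature term $\mathcal{W}$ is built from $R$ acting by algebraic derivation in each tensor slot together with Ricci contractions in the one-form slots. The content of~\cite[Prop.~4.2]{Bou} is precisely that for sections of $\Hom(\Lambda^2T^*M,\Lambda^2T^*M)$ this curvature term can be written in terms of the operator $q(R) = R*$ (acting in the two $\Lambda^2$-factors) plus the remaining contributions coming from the two ``outer'' one-form indices of each $\Lambda^2$-slot. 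I would extract from that reference the explicit expression of $\mathcal{W}$ and then observe that, since $R*R'$ was defined in~\eqref{eq:def_of_R*R} as exactly this $q(R)$-type operator (see the Remark after Lemma~\ref{le:Jacobioperator_of_R*R}), the $R*R'$-term appears directly, while the leftover part is a sum of four terms each of the form $R'_{\,\cdot\,,\,\cdot\,}\cdot\ric(\,\cdot\,,\,\cdot\,)$; collecting signs according to the curvature-tensor symmetries of $R'$ yields the displayed bracket with the prefactor $\tfrac12$.

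Concretely, the key steps are: (i) write $R'$ as a section of $\bigotimes^4 T^*M$ and apply the generic Weitzenböck formula, identifying the zeroth-order operator as $\sum_{i<j} R_{e_i,e_j}\cdot(\,\cdot\,)$ contracted appropriately in the four slots — this is the Lichnerowicz-type term $\Gamma$ mentioned in the Remark; (ii) split $\Gamma$ into the ``internal'' part that pairs the first two indices with the last two the way $q(R)=R*$ does, recognizing it as $\tfrac12 R*R'$ (the factor $\tfrac12$ coming from the normalization of $\d^\nabla,\delta^\nabla$ on $\Lambda^2$ versus the full tensor product), and the ``mixed'' part in which one derivation index is paired via $R$ with an index from the complementary $\Lambda^2$-factor; (iii) contract the mixed part: each such contraction produces a term $\sum_i R(x_a,e_i,\,\cdot\,,\,\cdot\,)$ meeting $R'(e_i,\,\cdot\,,\,\cdot\,,\,\cdot\,)$, which collapses via the definition of the Ricci tensor of $R$ and the symmetries of $R'$ to $R'_{x_b,x_d}\cdot\ric(x_a,x_c)$ and its three sign-permuted companions; (iv) assemble (ii) and (iii) and read off~\eqref{eq:Weitzenböck}. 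Throughout, one must use only that $R'$ has the algebraic symmetries of a curvature tensor (which is part of the hypothesis, since $\nabla^{2)}R'\in V^*\otimes V^*\otimes\scrC_0$), and that the metric is parallel so $\nabla^*\nabla$ commutes with all contractions; the curvature tensor $R$ entering $R*$ is the genuine curvature of the ambient metric, exactly as needed for the operators $\d^\nabla,\delta^\nabla$ to satisfy the Ricci identity in Def.~\ref{de:algebraic_two-jet}(b).

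I expect the main obstacle to be bookkeeping of constants and signs rather than any conceptual difficulty: the formula in~\cite{Bou} is stated for $\Hom(\Lambda^2,\Lambda^2)$ with its own conventions for the wedge pairing, the Laplacian, and the sign of $R$, and one must carefully match these to the conventions fixed in~\eqref{eq:def_of_R*R}, \eqref{eq:divergence_of_R} and~\eqref{eq:contracted_second_Bianchi_identity} of the present paper. In particular the relative factor of $2$ between "sum over $i<j$" and "sum over all $i,j$" in the curvature term, and the sign in $\delta^\nabla_xR(y,z) = -\sum_i\nabla_{e_i}R(e_i,x,y,z)$, are the places where an error is most likely; I would verify the final normalization by evaluating both sides on a rank-one trace test, e.g.\ contracting~\eqref{eq:Weitzenböck} over $(x_2,x_4)$ and comparing with the known Weitzenböck formula for the Ricci tensor (which is consistent with~\eqref{eq:Ricci_tensor_of_R*R} and~\eqref{eq:contracted_second_Bianchi_identity}), and on the model space $R'=R$ with $R$ Einstein, where Corollary~\ref{co:Jacobioperator_of_R*R} pins down $R*R$ and the Ricci terms collapse to $c\,R$. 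Once the constants check out on these test cases, the identity~\eqref{eq:Weitzenböck} follows in full generality by linearity.
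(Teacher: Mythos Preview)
Your overall strategy --- invoke Bourguignon's Weitzenb\"ock formula and then identify the zeroth-order curvature term --- is the same as the paper's, but you are making the second step much harder than it needs to be and your description of it is not quite right.

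The paper's proof is a two-liner. First one records (as in~\cite[Prop.~4.2]{Bou}) the raw identity
\[
(\d^\nabla\delta^\nabla + \delta^\nabla\d^\nabla) R'(x_1,x_2,x_3,x_4) = \nabla^*\nabla R'(x_1,x_2,x_3,x_4) - \sum_{i=1}^n \big ( R_{x_1,e_i}\cdot R'(e_i,x_2,x_3,x_4) - R_{x_2,e_i}\cdot R'(e_i,x_1,x_3,x_4) \big ),
\]
where the curvature term involves only the two ``form'' indices $x_1,x_2$ (with $R_{x_j,e_i}$ acting by derivation on all four slots of $R'$). Then one simply applies equation~\eqref{eq:R*R_alternativ} from Lemma~\ref{le:Jacobioperator_of_R*R}, which says precisely that twice this two-term sum equals $R*R'$ plus the displayed Ricci block. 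That lemma has already done all the work; no splitting into ``internal'' and ``mixed'' parts, and no test-case verification, is needed.

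Where your plan goes astray is in step~(i): you propose to ``write $R'$ as a section of $\bigotimes^4 T^*M$ and apply the generic Weitzenb\"ock formula, identifying the zeroth-order operator as'' the Lichnerowicz term $\Gamma$. But $\Gamma = R*R'$ is the curvature term of the \emph{Lichnerowicz} Laplacian, not of the Hodge-type Laplacian $\d^\nabla\delta^\nabla + \delta^\nabla\d^\nabla$ on $\Lambda^2$-valued $2$-forms; these are different operators with different curvature terms. The Bourguignon curvature term is the asymmetric two-slot expression above, not the symmetric four-slot $R*R'$. Your steps~(ii)--(iii) then amount to rediscovering~\eqref{eq:R*R_alternativ} from scratch in order to pass from one to the other. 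That would work, but it duplicates Lemma~\ref{le:Jacobioperator_of_R*R}; just cite it.
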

\begin{proof}
It is straightforward that 
\begin{equation*}
(\d^\nabla\delta^\nabla + \delta^\nabla\d^\nabla) R'(x_1,x_2,x_3,x_4) = \nabla^*\nabla R'(x_1,x_2,x_3,x_4) - \sum_{i=1}^n \big ( R_{x_1,e_i}\cdot R'(e_i,x_2,x_3,x_4) - R_{x_2,e_i}\cdot R'(e_i,x_1,x_3,x_4) \big )
\end{equation*}
(cf. the proof of~\cite[Prop.~4.2]{Bou}.) Now~\eqref{eq:Weitzenböck} follows from~\eqref{eq:R*R_alternativ}.
\end{proof}

Note, the term in~\eqref{eq:Weitzenböck} involving the Ricci-curvature vanishes after projection to the vector bundle of algebraic curvature tensors.

\bigskip
\begin{remark}\label{re:Weitzenböck}
For every two-jet $(R',\nabla R',\nabla^{2)} R)$ of a section of the vector bundle of algebraic curvature tensors
\begin{equation}\label{eq:strict_Weitzenböck_formula}
\frac{1}{12}\,\begin{array}{|c|c|}
\cline{1-2}
1 & 3 \\
\cline{1-2}
2 & 4 \\
\cline{1-2}
\end{array}\; (\d^\nabla\delta^\nabla + \delta^\nabla\d^\nabla) R'(x_1,x_2,x_3,x_4) = \nabla^*\nabla R'(x_1,x_2,x_3,x_4) + \frac{1}{2}  R*R'(x_1,x_2,x_3,x_4)\;.
\end{equation}
Here the Laplacian $(\d^\nabla\delta^\nabla + \delta^\nabla\d^\nabla)$ followed by the Young projector is a parallel second order differential operator on the vector bundle of algebraic curvature tensors  $\scrC_0(TM)$  for every pseudo-Riemannian manifold $M$\,, see~\cite[Sec.~3]{SW}. Further, recall the decomposition of a curvature tensor
\begin{equation}\label{eq:decomposition_of_algebraic_curvature_tensors}
R = \frac{\scal}{2n(n-1)} \gg \owedge \gg + \frac{1}{n-2} g\owedge \ric + W
\end{equation}
(where $\owedge$ denotes the Kulkarni-Nomizu product, $\scal$ the scalar curvature, $\ric$ the Ricci tensor and $W$ is the conformal Weyl tensor.) It follows from~\eqref{eq:Ricci_tensor_of_R*R} and~\eqref{eq:ScalarCurvature_of_R*R} that any term occurring in~\eqref{eq:strict_Weitzenböck_formula} respects the corresponding splitting of $\scrC_0(TM)$ into subbundles. Thus~\eqref{eq:strict_Weitzenböck_formula} is a Weitzenböck formula  on any of these vector bundles in the strict sense of~\cite[Sec.~3]{SW}.
\end{remark}

Specifically, we obtain the following formula for the curvature tensor of some pseudo-Riemannian manifold:

\bigskip
\begin{corollary}\label{co:Weitzenböck_spezial}
Let a two-jet $(R,\nabla R,\nabla^{2)}R)$ be given. Then
\begin{equation}\label{eq:Weitzenböck_spezial}
\nabla^*\nabla R(x_1,x_2,x_3,x_4) = \frac{1}{4}\;\begin{array}{|c|c|}
\cline{1-2}
1 & 3 \\
\cline{1-2}
2 & 4 \\
\cline{1-2}
\end{array}\; \nabla^{2)}_{x_1,x_3}\ric(x_2,x_4)  - \frac{1}{2}\,R*R(x_1,x_2,x_3,x_4)\;.
\end{equation}
In particular, if $\nabla^{2)}\ric = 0$ (e.g. the two-jet is Einstein), then
\begin{equation}\label{eq:Weitzenböck_spezial_fuer_Einstein}
\nabla^*\nabla R(x_1,x_2,x_3,x_4) = - \frac{1}{2}\,R*R(x_1,x_2,x_3,x_4)\;.
\end{equation}
\end{corollary}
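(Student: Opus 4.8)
The plan is to specialize the Weitzenböck formula of Proposition~\ref{p:Weitzenböck} (in the projected form of Remark~\ref{re:Weitzenböck}) to the case where the section of the curvature bundle is the curvature tensor $R$ itself, i.e.\ $R'=R$, and then to rewrite the left-hand side $(\d^\nabla\delta^\nabla+\delta^\nabla\d^\nabla)R$ by means of the Bianchi identities. Since $(R,\nabla R,\nabla^{2)}R)$ is a two-jet, one has $\nabla R\in\scrC_1$, i.e.\ the second Bianchi identity holds; equivalently $\d^\nabla R=0$, so that $\delta^\nabla\d^\nabla R=0$ and $(\d^\nabla\delta^\nabla+\delta^\nabla\d^\nabla)R=\d^\nabla\delta^\nabla R$.

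The first step is to compute $\d^\nabla\delta^\nabla R$ in terms of the Ricci tensor. By the contracted second Bianchi identity~\eqref{eq:contracted_second_Bianchi_identity}, $\delta^\nabla_z R(x,y)=\d^\nabla\ric(x,y,z)=\nabla_x\ric(y,z)-\nabla_y\ric(x,z)$; differentiating once more (which is legitimate because $\nabla^{2)}R\in V^*\otimes\scrC_1$) one finds that $\d^\nabla\delta^\nabla R(x_1,x_2,x_3,x_4)$ is obtained from $\nabla^{2)}_{x_1,x_3}\ric(x_2,x_4)$ by antisymmetrizing separately over the pairs of arguments $\{x_1,x_2\}$ and $\{x_3,x_4\}$, that is, over the two columns of the Young diagram~\eqref{eq:Young_symmetrizer} with $k=0$. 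Hence $(\d^\nabla\delta^\nabla+\delta^\nabla\d^\nabla)R$ equals the column-antisymmetrization of the tensor $(x_1,x_2,x_3,x_4)\mapsto\nabla^{2)}_{x_1,x_3}\ric(x_2,x_4)$.

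The second step is to apply the Young symmetrizer of~\eqref{eq:Young_symmetrizer}. By~\eqref{eq:strict_Weitzenböck_formula} with $R'=R$, the Young symmetrizer applied to $(\d^\nabla\delta^\nabla+\delta^\nabla\d^\nabla)R$ and divided by $12$ equals $\nabla^*\nabla R+\frac{1}{2}R*R$. Inserting the result of the first step, and using~\eqref{eq:eigenvalue} (for $k=0$ the Young symmetrizer acts on $\scrC_0$ as multiplication by $12$), reduces the identity to a routine computation of how the Young symmetrizer acts on a tensor that is already antisymmetric in its columns; carrying out this bookkeeping and solving for $\nabla^*\nabla R$ yields~\eqref{eq:Weitzenböck_spezial}. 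Alternatively one may argue directly from~\eqref{eq:Weitzenböck} with $R'=R$: the Ricci-curvature term appearing there is rewritten, by means of the Ricci identity $\nabla^{2)}_{x,y}\ric-\nabla^{2)}_{y,x}\ric=R_{x,y}\cdot\ric$ (the contraction of the Ricci identity built into the two-jet), as differences of $\nabla^{2)}\ric$ in the two orders of differentiation, and is then recombined with the column-antisymmetrization found above. Finally, if $\nabla^{2)}\ric=0$, in particular when the two-jet is Einstein, the first term on the right-hand side of~\eqref{eq:Weitzenböck_spezial} vanishes, which is~\eqref{eq:Weitzenböck_spezial_fuer_Einstein}.

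The main obstacle is the index bookkeeping in the second step: keeping track of which arguments are differentiation directions and which are Ricci slots while applying the Young symmetrizer, and getting the normalization constants right. As a consistency check, by~\eqref{eq:Ricci_tensor_of_R*R}--\eqref{eq:ScalarCurvature_of_R*R} every term in~\eqref{eq:Weitzenböck_spezial} respects the decomposition~\eqref{eq:decomposition_of_algebraic_curvature_tensors} of $\scrC_0$, and on a space form (where $\nabla R=0$) all three terms vanish, consistently with~\eqref{eq:Weitzenböck_spezial_fuer_Einstein}.
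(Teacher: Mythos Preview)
Your proposal is correct and follows essentially the same route as the paper: both arguments combine the second Bianchi identity $\d^\nabla R=0$, the contracted Bianchi identity~\eqref{eq:contracted_second_Bianchi_identity}, the Ricci identity, and the Weitzenb\"ock formula to relate the Young-symmetrized $\nabla^{2)}\ric$ to $\nabla^*\nabla R+\tfrac12 R*R$. The paper simply runs the computation in the opposite direction---it starts by expanding the Young symmetrizer applied to $\nabla^{2)}_{x_1,x_3}\ric(x_2,x_4)$, uses the Ricci and Bianchi identities to recognise $4(\d^\nabla\delta^\nabla+\delta^\nabla\d^\nabla)R$ plus the Ricci-curvature terms, and then invokes~\eqref{eq:Weitzenböck} (your ``alternative'' is in fact exactly this).
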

\begin{proof}
\begin{align*}
&\begin{array}{|c|c|}
\cline{1-2}
1 & 3 \\
\cline{1-2}
2 & 4 \\
\cline{1-2}
\end{array}\; \nabla^{2)}_{x_1,x_3}\, \ric(x_2,x_4)
& = \left \lbrace \begin{array}{ll} 
+ 2\, \nabla^{2)}_{x_1,x_3}\, \ric(x_2,x_4) & + 2\, \nabla^{2)}_{x_3,x_1}\cdot \ric(x_2,x_4)\\
- 2\, \nabla^{2)}_{x_2,x_3}\, \ric(x_1,x_4) & - 2\, \nabla^{2)}_{x_3,x_2}\cdot \ric(x_1,x_4)\\
- 2\, \nabla^{2)}_{x_1,x_4}\, \ric(x_2,x_3) & - 2\, \nabla^{2)}_{x_4,x_1}\cdot \ric(x_2,x_3)\\
+ 2\, \nabla^{2)}_{x_2,x_4}\, \ric(x_1,x_3) & + 2\, \nabla^{2)}_{x_4,x_2}\cdot \ric(x_1,x_3)
\end{array}\right.
\end{align*}
We use the Ricci identity and the second Bianchi identity
via~\eqref{eq:contracted_second_Bianchi_identity} to see that the last expression is
\begin{align*}
&\left . \begin{array}{ll}
+ 4\, \nabla^{2)}_{x_3,x_1}\, \ric(x_2,x_4) & + 2\, R_{x_1,x_3}\cdot \ric(x_2,x_4)\\
- 4\, \nabla^{2)}_{x_3,x_2}\, \ric(x_1,x_4) & - 2\, R_{x_2,x_3}\cdot\ric(x_1,x_4) \\
- 4\, \nabla^{2)}_{x_4,x_1}\, \ric(x_2,x_3) & - 2\, R_{x_1,x_4}\cdot \ric(x_2,x_3)\\
+ 4\, \nabla^{2)}_{x_4,x_2}\, \ric(x_1,x_3) & + 2\, R_{x_2,x_4}\cdot \ric(x_1,x_3)\\
\end{array}\right \rbrace
\stackrel{\eqref{eq:contracted_second_Bianchi_identity}}{=} \left \lbrace \begin{array}{ll} 
 + 4\, \d^\nabla\delta^\nabla R(x_3,x_4,x_1,x_2) & + 4\, \delta^\nabla\underbrace{\d^\nabla R}_{\stackrel{\text{2.\ Bianchi}}= 0}(x_3,x_4,x_1,x_2)\\
+ 2\, R_{x_2,x_4}\cdot \ric(x_1,x_3) & - 2\, R_{x_1,x_4}\cdot \ric(x_2,x_3)\\
+ 2\, R_{x_1,x_3}\cdot \ric(x_2,x_4) & - 2\, R_{x_2,x_3}\cdot\ric(x_1,x_4) \\
\end{array}\right .\\
& \stackrel{\eqref{eq:Weitzenböck}}= 4 \nabla^*\nabla R(x_1,x_2,x_3,x_4) + 2 R*R(x_1,x_2,x_3,x_4)\;.
\end{align*}
The result follows.
\end{proof}

\section{Traces of the associated linear two-jet}
Let $V$ be a Euclidean vector space  and $\{e_1,\ldots,e_n\}$ be an orthonormal basis of $V$\,. Let $A\in \bigotimes^k V^*$ be a covariant $k$-tensor. We put
\begin{equation}
\trace_{i,j} A(x_1,\cdots,\hat x_i,\cdots ,\hat x_j, \cdots x_k) := \sum_{i,j=1}^n A(x_1,\cdots,e_i,\cdots ,e_j, \cdots x_k)\;,
\end{equation}
the trace of $A$ with respect to the variables $x_i$ and $x_j$\,.

Given $\nabla^{2)}R\in \scrC_2$ there exist exactly three
different traces: the second covariant derivative of the Ricci tensor
\begin{equation}\label{eq:Ricci_tensor_of_a_linear_two-jet}
  \nabla^{2)}_{x_1,x_2}\ric(y_1,y_2) =  -\sum_{i=1}^n\nabla^{2)}_{x_1,x_2}R(y_1,e_i,y_2,e_i)\;,\\
\end{equation}
the covariant derivative of the divergence
\begin{equation}\label{eq:divergence_of_the_curvature_tensor_of_a_linear_two-jet}
\nabla_x\delta_{y_1}^\nabla R (y_2,y_3) := - \sum_{i=1}^n\nabla^{2)}_{x,e_i}R(e_i,y_1,y_2,y_3)\;,\\
\end{equation}
and the rough Laplacian given by
\begin{equation}\label{eq:rough_Laplacian_of_a_linear_two-jet}
\nabla^*\nabla R (x_1,x_2,x_3,x_4) := - \sum_{i=1}^n\nabla^{2)}_{e_i,e_i}R(x_1,x_2,x_3,x_4)\;.\;
\end{equation}

Moreover, there are the following relations between the three traces:

\bigskip
\begin{lemma}\label{le:hirachy}
Let  $\nabla^{2)}R\in\scrC_2$ be given. Then
\begin{align}
\label{eq:hirachy_1}
&\nabla_{x_1}\delta^\nabla_{x_2} R(x_3,x_4)  & = &\nabla^{2)}_{x_1,x_3}\ric(x_2,x_4) - \nabla^{2)}_{x_1,x_4}\ric(x_2,x_3)\;,\\
\label{eq:hirachy_2}
&\nabla^*\nabla R(x_1,x_2,x_3,x_4) & = & \frac{1}{4}\, \begin{array}{|c|c|}
\cline{1-2}
1 & 3 \\
\cline{1-2}
2 & 4 \\
\cline{1-2}
\end{array}\;\nabla^{2)}_{x_1,x_3}\ric(x_2,x_4)\\
\label{eq:hirachy_3}
& & = & \nabla_{x_1}\delta^\nabla_{x_2}
R(x_3,x_4)- \nabla_{x_2}\delta^\nabla_{x_1} R(x_3,x_4)\;.
\end{align}
In particular,
\begin{equation}\label{eq:hirachy_4}
\nabla^{2)} \ric = 0 \stackrel{\eqref{eq:hirachy_1}}\Rightarrow \nabla\delta^{\nabla}R =
0 \stackrel{\eqref{eq:hirachy_2}}\Rightarrow \nabla^*\nabla R = 0\;.
\end{equation}
\end{lemma}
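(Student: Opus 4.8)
The plan is to prove the three displayed identities in Lemma~\ref{le:hirachy} directly from the definitions of the traces and the two Bianchi identities, and then read off~\eqref{eq:hirachy_4} as an immediate formal consequence. The starting point is that $\nabla^{2)}R\in\scrC_2=\Sym^2V^*\otimes\scrC_0\cap V^*\otimes\scrC_1$, so $\nabla^{2)}R$ is symmetric in its first two (differentiation) slots, has curvature symmetries in its last four slots, and satisfies the second Bianchi identity in the appropriate triple of slots. For~\eqref{eq:hirachy_1} I would start from the definition~\eqref{eq:divergence_of_the_curvature_tensor_of_a_linear_two-jet} of $\nabla_{x_1}\delta^\nabla_{x_2}R(x_3,x_4)=-\sum_i\nabla^{2)}_{x_1,e_i}R(e_i,x_2,x_3,x_4)$ and apply the (differentiated) second Bianchi identity $\nabla^{2)}_{x_1,e_i}R(e_i,x_2,x_3,x_4)+\nabla^{2)}_{x_1,x_3}R(x_4,x_2,e_i,\dots)$-type cyclic sum to rewrite the contraction over $e_i$ as a sum of two uncontracted terms; then contracting the remaining $e_i$ against the curvature slots produces $\ric$, giving exactly $\nabla^{2)}_{x_1,x_3}\ric(x_2,x_4)-\nabla^{2)}_{x_1,x_4}\ric(x_2,x_3)$ after using~\eqref{eq:Ricci_tensor_of_a_linear_two-jet}. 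This is essentially the differentiated version of the usual contracted second Bianchi identity~\eqref{eq:contracted_second_Bianchi_identity}.

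For~\eqref{eq:hirachy_3} I would again use the second Bianchi identity, this time applied to the pair of outer contractions: $\nabla^*\nabla R$ is a double trace, and commuting one covariant derivative past the contraction via the Bianchi relation turns $-\sum_i\nabla^{2)}_{e_i,e_i}R(x_1,x_2,x_3,x_4)$ into the antisymmetrization $\nabla_{x_1}\delta^\nabla_{x_2}R(x_3,x_4)-\nabla_{x_2}\delta^\nabla_{x_1}R(x_3,x_4)$; since $\nabla^{2)}R$ is symmetric in its first two slots (this is the Ricci-identity content of $\scrC_2$, in the form with $R=0$), there is no curvature correction term and the identity drops out cleanly. Combining~\eqref{eq:hirachy_3} with~\eqref{eq:hirachy_1} gives the middle expression in~\eqref{eq:hirachy_2}: substituting~\eqref{eq:hirachy_1} into $\nabla_{x_1}\delta^\nabla_{x_2}R(x_3,x_4)-\nabla_{x_2}\delta^\nabla_{x_1}R(x_3,x_4)$ yields a sum of four $\nabla^{2)}\ric$ terms, which one checks equals $\frac14$ times the Young-symmetrized $\nabla^{2)}_{x_1,x_3}\ric(x_2,x_4)$ attached to the shape $(2,2)$ diagram — the bookkeeping here is the same eight-term expansion of that Young symmetrizer that already appears in the proof of Corollary~\ref{co:Weitzenböck_spezial}, except that now no Ricci-identity commutators are needed since we are differentiating $\ric$, not $R$.

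Finally~\eqref{eq:hirachy_4} is purely formal: if $\nabla^{2)}\ric=0$ then the right side of~\eqref{eq:hirachy_1} vanishes for all arguments, hence $\nabla\delta^\nabla R=0$; and then the right side of~\eqref{eq:hirachy_2} (in its middle form, a sum of $\nabla^{2)}\ric$ terms) vanishes, hence $\nabla^*\nabla R=0$. I expect the only real obstacle to be the combinatorial verification that the four-term expression coming from~\eqref{eq:hirachy_1}--\eqref{eq:hirachy_3} matches $\frac14$ of the Young-symmetrized second derivative of $\ric$; this requires care with signs and with which slots the diagram~\eqref{eq:Young_symmetrizer} (here in its $(2,2)$ special case) symmetrizes and antisymmetrizes, but it is a finite check of the type already carried out in the excerpt, and the symmetry of $\nabla^{2)}\ric$ in its last two slots plus its symmetry in its first two differentiation slots removes all the potential curvature-correction terms that complicate the analogous computation for $R$ itself.
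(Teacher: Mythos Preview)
Your argument is correct, and the ingredients are the same as the paper's, but you order the derivation of \eqref{eq:hirachy_2} and \eqref{eq:hirachy_3} differently. The paper obtains \eqref{eq:hirachy_2} first, by observing that $(0,0,\nabla^{2)}R)$ is an algebraic two-jet whenever $\nabla^{2)}R\in\scrC_2$ and then specializing Corollary~\ref{co:Weitzenböck_spezial} to this two-jet (so that the $R*R$ term vanishes); only afterwards does it read off \eqref{eq:hirachy_3} by combining \eqref{eq:hirachy_1} with \eqref{eq:hirachy_2} and the symmetry $\nabla^{2)}\ric\in\Sym^2V^*\otimes\Sym^2V^*$. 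You instead establish \eqref{eq:hirachy_3} directly from the second Bianchi identity and the symmetry of $\nabla^{2)}R$ in its two differentiation slots, and then recover \eqref{eq:hirachy_2} by substituting \eqref{eq:hirachy_1} into \eqref{eq:hirachy_3} and matching the resulting four-term expression against the $(2,2)$ Young-symmetrizer expansion. Your route is slightly more self-contained in that it does not appeal to the Weitzenb\"ock corollary; the paper's route is more economical given that Corollary~\ref{co:Weitzenböck_spezial} is already in hand. Either way the substance is the same: the second Bianchi identity and the $\Sym^2\otimes\Sym^2$ symmetry of $\nabla^{2)}\ric$ do all the work.
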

\begin{proof}
The first equation uses the second Bianchi identity. Further note that~\eqref{eq:hirachy_2} is a special case
of~\eqref{eq:Weitzenböck_spezial} since $(0,0,\nabla^{2)}R)$ is a two-jet by
definition of $\scrC_2$\,. Then~\eqref{eq:hirachy_3} follows from~\eqref{eq:hirachy_1} since
$\nabla^{2)}\ric\in \Sym^2 V^*\otimes \Sym^2 V^*$\, .
\end{proof}
 
Thus $\nabla^{2)} \ric$ should be seen as the {\em essential trace} of
$\nabla^{2)}R$ because its vanishing already implies that $\nabla^{2)}R$ is totally
trace-free according to~\eqref{eq:hirachy_1}-\eqref{eq:hirachy_4}.

\subsection{The associated second covariant derivative of the Ricci tensor} 
\label{se:some_traces}
 For every algebraic two-jet $(R,\nabla R,\nabla^{2)} R)$ we consider the symmetrized/anti-symmetrized second covariant derivative of the curvature tensor
\begin{equation}\label{eq:associated_linear_two-jet_1}
\tilde \nabla^{2)}_{x_5,x_6}\tilde R (x_1,x_2,x_3,x_4) := \begin{array}{|c|c|c|c|}
\cline{1-4}
1 & 3 & 5 & 6 \\
\cline{1-4}
2 & 4 \\
\cline{1-2}
\end{array}\;\nabla^{2)}_{x_5,x_6}R(x_1,x_2,x_3,x_4)\;.
\end{equation}
Then $\tilde \nabla^{2)}\tilde \in \scrC_2$\,. The associated second covariant derivative of the 
Ricci tensor is thus given by 
\begin{equation}\label{eq:definition_of_the_second_covariant_derivative_of_the_Ricci_tensor_of_the_associated_linear_two-jet}
\tilde \nabla^{2)}_{x_5,x_6}\tilde \ric (x_2,x_4) := -\trace_{1,3}\, \begin{array}{|c|c|c|c|}
\cline{1-4}
1 & 3 & 5 & 6 \\
\cline{1-4}
2 & 4 \\
\cline{1-2}
\end{array}\;\nabla^{2)}_{x_5,x_6}R(x_1,x_2,x_3,x_4)\;.
\end{equation}
In view of~\eqref{eq:eigenvalue} and the remarks after that, the Ricci identity implies that there
exists some expression $f(R)$ quadratic in $R$ such that
\begin{equation}\label{eq:difference_1}
\tilde \nabla^{2)}_{x_5,x_6}\ric (x_2,x_4) - 80\, \nabla^{2)}_{x_5,x_6}\ric
(x_2,x_4) = f(R)\;.
\end{equation}
In fact, the explicit expression for $f(R)$ will be given
in~\eqref{eq:difference_2} below.

\bigskip
\begin{lemma}\label{le:Young1}
For every algebraic curvature tensor $R$ on $V$  and all $x_1,\ldots,x_6\in V$
\begin{align}\label{eq:Young10}
&\trace_{1,3}\;\begin{array}{|c|c|c|}
\cline{1-2}
1 & 3 \\
\cline{1-2}
2 & 4 \\
\cline{1-2}
\end{array}\; R_{x_5,x_3}\cdot R(x_1,x_2,x_4,x_6)  = \trace_{1,3}\; \begin{array}{|c|c|c|}
\cline{1-2}
1 & 3 \\
\cline{1-2}
2 & 4 \\
\cline{1-2}
\end{array}\; R_{x_5,x_1}\cdot R(x_3,x_2,x_4,x_6)\\ & = 3\, \left (\begin{array}{ll}
 \sum_{i=1}^n\; \big (R_{x_5,e_i}\cdot R(e_i,x_2,x_4,x_6) & + R_{x_5,e_i}\cdot R(e_i,x_4,x_2,x_6) \big )\\
 + R_{x_5,x_2}\cdot \ric(x_4,x_6) & + R_{x_5,x_4}\cdot \ric(x_2,x_6)
\end{array}\right )\;.
\end{align}
\end{lemma}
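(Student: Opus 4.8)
The plan is to reduce everything to an explicit expansion of the Young symmetrizer for the shape $(2,2)$, and then take the trace $\trace_{1,3}$. First I would write out the Young symmetrizer attached to the tableau with rows $\{1,3\},\{2,4\}$ applied to $R_{x_5,x_3}\cdot R(x_1,x_2,x_4,x_6)$ (which, as a tensor in the slots $x_1,x_2,x_3,x_4$, has the symmetries of an algebraic curvature tensor because $R*$-type operations preserve curvature symmetries — cf.\ the remark following \eqref{eq:def_of_R*R}). The symmetrizer first symmetrizes in the column pairs $\{1,2\}$ and $\{3,4\}$ and then antisymmetrizes in the rows $\{1,3\}$ and $\{2,4\}$; since $R$ is already skew in $\{1,2\}$ and in $\{3,4\}$, the column-symmetrization kills those and only the row-antisymmetrization survives, producing the familiar factor $12$ from \eqref{eq:eigenvalue} when applied to a genuine curvature tensor. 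The content to check is that the argument $R_{x_5,x_3}\cdot R(x_1,x_2,x_4,x_6)$ is \emph{not} in $\scrC_0$ as a function of $(x_1,x_2,x_3,x_4)$ because the Bianchi identity fails, so the symmetrizer does not simply rescale it; instead one gets genuinely new terms after applying it.

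The first equality in \eqref{eq:Young10} — that swapping the roles of the slots $x_1$ and $x_3$ inside $R_{x_5,\,\cdot\,}\cdot R(\cdot,x_2,x_4,x_6)$ gives the same result after applying the symmetrizer — is immediate: the Young symmetrizer for this tableau is (anti)symmetric under the transposition $1\leftrightarrow 3$ up to sign combined with $2\leftrightarrow 4$, and together with the pair symmetry of $R$ this forces the two expressions to coincide. Concretely I would observe that the row-antisymmetrization in $\{1,3\}$ makes the dependence on which of the two skew slots carries the index $x_3$ versus the summed index immaterial. So the real work is only the second equality.

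For the second equality I would apply the expanded symmetrizer to $R_{x_5,x_3}\cdot R(x_1,x_2,x_4,x_6)$, obtaining a sum of terms of the form $\pm R_{x_5,x_\sigma}\cdot R(x_{\sigma'},\cdots)$ over the relevant coset representatives, then set $x_1=x_3=e_i$ and sum over $i$. At that point the derivation operator $R_{x_5,e_i}\cdot$ acting on $R(\cdots,e_i,\cdots)$ with a repeated $e_i$ produces two kinds of contributions: those where the summed index stays inside $R$, giving the terms $\sum_i R_{x_5,e_i}\cdot R(e_i,x_2,x_4,x_6)$ and $\sum_i R_{x_5,e_i}\cdot R(e_i,x_4,x_2,x_6)$; and those where the contraction of $R$ over a repeated slot collapses to the Ricci tensor, giving $R_{x_5,x_2}\cdot\ric(x_4,x_6)$ and $R_{x_5,x_4}\cdot\ric(x_2,x_6)$. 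Careful bookkeeping of signs and of which terms survive the row-antisymmetrization should collect each of the four surviving terms with the same coefficient $3$; the first Bianchi identity for $R$ is used once to merge a pair of would-be distinct terms into the symmetric combination $R_{x_5,e_i}\cdot R(e_i,x_2,x_4,x_6)+R_{x_5,e_i}\cdot R(e_i,x_4,x_2,x_6)$. I expect the main obstacle to be purely combinatorial: keeping track of the roughly eight signed terms coming out of the symmetrizer, correctly identifying which ones contract to Ricci and which ones cancel against each other, and verifying that the Bianchi identity is applied in exactly the right place so that the overall coefficient is $3$ and not, say, $2$ or $4$. Once the expansion is written down this is a finite check; I would organize it by grouping the symmetrizer's terms according to the position of the slot that receives $x_5$'s derivation versus the two contracted slots.
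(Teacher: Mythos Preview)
Your overall strategy --- expand the Young symmetrizer explicitly, then take $\trace_{1,3}$ --- is exactly what the paper does, and there is no shortcut. But two concrete errors in your setup would derail the computation if carried out as written.

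First, you have the Young symmetrizer convention reversed. For the tableau with rows $\{1,3\},\{2,4\}$ and columns $\{1,2\},\{3,4\}$, the Fulton--Harris symmetrizer used in the paper (cf.~\eqref{eq:eigenvalue}) first \emph{symmetrizes over the rows} $\{1,3\}$ and $\{2,4\}$ and then \emph{antisymmetrizes over the columns} $\{1,2\}$ and $\{3,4\}$. Your description (symmetrize over $\{1,2\},\{3,4\}$, then antisymmetrize over $\{1,3\},\{2,4\}$) would annihilate any genuine curvature tensor, contradicting the factor $12$ in~\eqref{eq:eigenvalue}. Second, the tensor $R_{x_5,x_3}\cdot R(x_1,x_2,x_4,x_6)$, regarded in the slots $(x_1,x_2,x_3,x_4)$, does \emph{not} have curvature-tensor symmetries: $x_3$ lives inside the endomorphism $R_{x_5,x_3}$ while $x_4$ sits in the third argument of $R$, so there is no skew-symmetry in $\{x_3,x_4\}$, and the $R*$-remark after~\eqref{eq:def_of_R*R} is irrelevant since no basis is being summed here. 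Once you use the correct symmetrizer, the expansion yields sixteen terms (this is what the paper writes out); after setting $x_1=x_3=e_i$ and summing, the terms collapse using only the skew-symmetries of $R$ and the definition of $\ric$ --- pairs like $-R(x_2,x_4,e_i,x_6)-R(x_4,x_2,e_i,x_6)$ cancel by skewness alone, so the first Bianchi identity is not actually invoked, and the coefficient $3$ drops out from a straight count.
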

\begin{proof}
Equation~\ref{eq:Young1} is clear from~\eqref{eq:Young_symmetrizer}
and either side of the equation is given by
 \begin{align*}
\sum_{i=1}^n\;&\left \lbrace \begin{array}{llll}
 + R_{x_5,e_i}\cdot R(e_i,x_2,x_4,x_6) & + R_{x_5,e_i}\cdot R(e_i,x_2,x_4,x_6)&  + R_{x_5,e_i}\cdot R(e_i,x_4,x_2,x_6) & + R_{x_5,e_i}\cdot R(e_i,x_4,x_2,x_6)\\
 - R_{x_5,x_2}\cdot R(e_i,e_i,x_4,x_6) & - R_{x_5,e_i}\cdot R(x_2,e_i,x_4,x_6) & - R_{x_5,x_2}\cdot R(e_i,x_4,e_i,x_6) & - R_{x_5,e_i}\cdot R(x_2,x_4,e_i,x_6)\\
 - R_{x_5,e_i}\cdot R(x_4,x_2,e_i,x_6) & - R_{x_5,x_4}\cdot R(e_i,x_2,e_i,x_6) & - R_{x_5,e_i}\cdot R(x_4,e_i,x_2,x_6) & - R_{x_5,x_4}\cdot R(e_i,e_i,x_2,x_6)\\
 + R_{x_5,x_2}\cdot R(x_4,e_i,e_i,x_6) & + R_{x_5,x_4}\cdot R(x_2,e_i,e_i,x_6) & + R_{x_5,x_2}\cdot R(x_4,e_i,e_i,x_6) & + R_{x_5,x_4}\cdot R(x_2,e_i,e_i,x_6)
\end{array}\right .\;.
\end{align*}
from which the result follows.
\end{proof}

\bigskip
\begin{corollary}\label{co:Young1}
Let $R$ be an algebraic curvature tensor.
\begin{equation}\label{eq:Young1}
\trace_{1,3}\;\begin{array}{|c|c|c|}
\cline{1-3}
1 & 3 & 6\\
\cline{1-3}
2 & 4 & \multicolumn{1}{c}{\;\;\;} \\
\cline{1-2}
\end{array}\;R_{x_5,x_6}\cdot R(x_1,x_2,x_3,x_4) = 6\;\left (\begin{array}{l}
-  2\,R_{x_5,x_6}\cdot \ric(x_2,x_4)\\
+ \sum_{i=1}^n  R_{x_5,e_i}\cdot R(e_i,x_2,x_6,x_4)\\ + \sum_{i=1}^nR_{x_5,e_i}\cdot R(e_i,x_4,x_6,x_2)\\
- R_{x_5,x_2}\cdot \ric(x_4,x_6)\\ - R_{x_5,x_4}\cdot \ric(x_2,x_6)
\end{array}\right )\;.
\end{equation}
\end{corollary}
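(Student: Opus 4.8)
The plan is to derive Corollary~\ref{co:Young1} directly from Lemma~\ref{le:Young1} by applying the Young symmetrizer associated with shape $(3,2)$ in two stages: first antisymmetrize/symmetrize in the columns and first row to the extent dictated by the positions $1,3$ and $2,4$, then incorporate the extra column box $6$. Concretely, the Young symmetrizer for the diagram $\begin{array}{|c|c|c|}\cline{1-3}1 & 3 & 6\\\cline{1-3}2 & 4 & \multicolumn{1}{c}{\;\;\;}\\\cline{1-2}\end{array}$ can be written as a composition of the symmetrizer for $\begin{array}{|c|c|}\cline{1-2}1 & 3\\\cline{1-2}2 & 4\\\cline{1-2}\end{array}$ (acting on the slots $1,2,3,4$) followed by the symmetrization over the first row, now including slot $6$. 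Thus I would start from the left-hand side of~\eqref{eq:Young1}, expand the outer symmetrization over $\{1,3,6\}$ as a sum of terms in which $6$ is cyclically permuted into the positions occupied by $1$ or $3$, and recognize each resulting term as an instance of the left-hand side of~\eqref{eq:Young10}, with the roles of the $x_i$ suitably relabelled.

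The key steps, in order, are: (i) write the shape-$(3,2)$ symmetrizer applied to $R_{x_5,x_6}\cdot R(x_1,x_2,x_3,x_4)$ as $\mathrm{Sym}_{\{1,3,6\}}\circ(\text{shape-}(2,2)\text{ symmetrizer on }1,2,3,4)$ up to the appropriate combinatorial constant; (ii) after taking the trace $\trace_{1,3}$, note that the trace over the pair $(x_1,x_3)$ kills most of the symmetrization over $\{1,3,6\}$ and leaves only the two ``transpositions'' moving $x_6$ into the first row, so that the expression collapses to $2\,\trace_{1,3}$ of the shape-$(2,2)$-symmetrized tensor with $x_6$ playing the role previously played by one of $x_1,x_3$; (iii) substitute the evaluated right-hand side of~\eqref{eq:Young10} from Lemma~\ref{le:Young1}; (iv) use the first Bianchi identity and the pair symmetry of $R$ to bring the inner curvature arguments into the ordering $(e_i,x_2,x_6,x_4)$ and $(e_i,x_4,x_6,x_2)$ displayed on the right-hand side of~\eqref{eq:Young1}, and collect the Ricci terms $R_{x_5,x_6}\cdot\ric(x_2,x_4)$, $R_{x_5,x_2}\cdot\ric(x_4,x_6)$, $R_{x_5,x_4}\cdot\ric(x_2,x_6)$; (v) keep careful track of the numerical factors ($3$ from Lemma~\ref{le:Young1}, a factor $2$ from step (ii)) to land on the overall constant $6$.

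The main obstacle I anticipate is step (ii) together with the bookkeeping in step (iv): one must be sure exactly which permutations in the outer row-symmetrization survive contraction over the symmetric pair $(x_1,x_3)$, and then repeatedly apply the first Bianchi identity and pair symmetry to a moderately large sum of terms of the form $R_{x_5,e_i}\cdot R(\,\cdot\,,\cdot\,,\cdot\,,\cdot\,)$ without losing or double-counting a term. The identity $R_{x_5,e_i}\cdot R(e_i,x_2,x_4,x_6)+R_{x_5,e_i}\cdot R(e_i,x_4,x_2,x_6)$ appearing on the right of~\eqref{eq:Young10} has to be re-expressed, via the first Bianchi identity applied in the last three slots, in terms of the combination $R_{x_5,e_i}\cdot R(e_i,x_2,x_6,x_4)+R_{x_5,e_i}\cdot R(e_i,x_4,x_6,x_2)$ that occurs in~\eqref{eq:Young1}; checking that the discrepancy is absorbed exactly by the Ricci terms is the delicate point. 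Since this is purely a matter of expanding a Young symmetrizer and applying the Bianchi identity to algebraic curvature tensors — the same type of computation already carried out in Lemma~\ref{le:Young1} — I expect no conceptual difficulty beyond this combinatorial care, and the corollary then follows by collecting terms.
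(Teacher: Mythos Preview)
Your overall strategy is exactly the paper's: decompose the shape-$(3,2)$ Young symmetrizer as the shape-$(2,2)$ symmetrizer on slots $1,2,3,4$ followed by the symmetrization over the row $\{1,3,6\}$, then invoke Lemma~\ref{le:Young1}. However, step~(ii) of your plan contains a genuine error. The row symmetrization over $\{1,3,6\}$ decomposes into \emph{three} cosets of $\scrS_{\{1,3\}}$ (represented by $\mathrm{id}$, $(1\,6)$, $(3\,6)$), and the identity coset is \emph{not} killed by $\trace_{1,3}$. Indeed,
\[
\trace_{1,3}\;\begin{array}{|c|c|}
\cline{1-2}
1 & 3\\ \cline{1-2}
2 & 4\\ \cline{1-2}
\end{array}\; R_{x_5,x_6}\cdot R(x_1,x_2,x_3,x_4)
\stackrel{\eqref{eq:eigenvalue}}{=}-12\,R_{x_5,x_6}\cdot\ric(x_2,x_4),
\]
and this is precisely the source of the term $6\cdot(-2\,R_{x_5,x_6}\cdot\ric(x_2,x_4))$ in~\eqref{eq:Young1}. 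If you drop the identity coset as you propose, you will be missing this term with no way to recover it later.

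Your step~(iv) does not rescue the situation. The passage from $R_{x_5,e_i}\cdot R(e_i,x_2,x_4,x_6)$ to $R_{x_5,e_i}\cdot R(e_i,x_2,x_6,x_4)$ is simply a sign flip from the antisymmetry of $R$ in its last two arguments; no first Bianchi identity is needed, and in particular no extra Ricci term of the form $R_{x_5,x_6}\cdot\ric(x_2,x_4)$ is produced. So the ``delicate point'' you anticipate in~(iv) is a non-issue, while the actual missing contribution comes from the identity coset you discarded in~(ii). Correct the decomposition to keep all three coset terms (as the paper does): the identity coset gives $-12\,R_{x_5,x_6}\cdot\ric(x_2,x_4)$ via~\eqref{eq:eigenvalue}, and each of the two transposition cosets gives, after one sign flip, exactly the expression in Lemma~\ref{le:Young1} with coefficient $-3$; summing yields~\eqref{eq:Young1}.
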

\begin{proof}
\begin{align*}
\trace_{1,3}\;&\begin{array}{|c|c|c|}
\cline{1-3}
1 & 3 & 6\\
\cline{1-3}
2 & 4 & \multicolumn{1}{c}{\;\;\;} \\
\cline{1-2}
\end{array}\; R_{x_5,x_6}\cdot R(x_1,x_2,x_3,x_4)
= & \underbrace{\trace_{1,3}\;\begin{array}{|c|c|}
\cline{1-2}
1 & 3 \\
\cline{1-2}
2 & 4 \\
\cline{1-2}
\end{array}\; R_{x_5,x_6}\cdot  R(x_1,x_2,x_3,x_4)}_{\stackrel{\eqref{eq:eigenvalue}}=- 12\,R_{x_5,x_6}\cdot \ric(x_2,x_4)}\\
+&\; \underbrace{\trace_{1,3}\;\begin{array}{|c|c|}
\cline{1-2}
1 & 3 \\
\cline{1-2}
2 & 4 \\
\cline{1-2}
\end{array}\; R_{x_5,x_1}\cdot  R(x_3,x_2,x_6,x_4)}_{\stackrel{\eqref{eq:Young10}}= \;\left\lbrace\begin{array}{l}
- 3\, \sum_{i=1}^n R_{x_5,e_i}\cdot R(e_i,x_2,x_4,x_6)\\ - 3\, \sum_{i=1}^n R_{x_5,e_i}\cdot R(e_i,x_4,x_2,x_6) \\
- 3\, R_{x_5,x_2}\cdot \ric(x_4,x_6)\\ - 3\, R_{x_5,x_4}\cdot \ric(x_2,x_6)
\end{array}\right .}
&+ \underbrace{\trace_{1,3}\;\begin{array}{|c|c|}
\cline{1-2}
1 & 3 \\
\cline{1-2}
2 & 4 \\
\cline{1-2}
\end{array}\;R_{x_5,x_3}\cdot  R(x_6,x_2,x_1,x_4)}_{\stackrel{\eqref{eq:Young10}}= \left\lbrace\begin{array}{l}
- 3\, \sum_{i=1}^n R_{x_5,e_i}\cdot R(e_i,x_2,x_4,x_6)\\ - 3\, \sum_{i=1}^nR_{x_5,e_i}\cdot R(e_i,x_4,x_2,x_6) \\
- 3\, R_{x_5,x_2}\cdot \ric(x_4,x_6)\\ - 3\, R_{x_5,x_4}\cdot \ric(x_2,x_6)
\end{array}\right .}\;.
\end{align*}
\end{proof}

For the following lemma let $\scrS_I$ denote the Permutation group of some set $I$\,.


\bigskip
\begin{lemma}\label{le:Young2}
Let some algebraic two-jet $(R,\nabla R,\nabla^{2)}R)$ be given.
\begin{equation}\label{eq:Young2}
- \trace_{1,3}\; \begin{array}{|c|c|}
\cline{1-2}
1 & 3 \\
\cline{1-2}
2 & 4 \\
\cline{1-2}
\end{array}\; \nabla^{2)}_{x_1,x_3}\, R(x_5,x_2,x_6,x_4) =\!\!\!\!
\sum_{\begin{array}{c}(\sigma,\tau)\in\\\scrS_{\{2,4\}}\times\scrS_{\{5,6\}}\end{array}} \left \lbrace 
\begin{array}{l} 
\ \nabla^*\nabla R(x_{\tau(5)},x_{\sigma(2)},x_{\tau(6)},x_{\sigma(4)}) \\
+ \nabla^{2)}_{x_{\sigma(2)},x_{\sigma(4)}}\ric(x_{\tau(5)},x_{\tau(6)})\\
- 2\, \nabla_{x_{\sigma(2)}}\delta_{x_{\tau(5)}}R(x_{\sigma(4)},x_{\tau(6)})\\
- \sum_{i=1}^n R_{x_{\sigma(2)},e_i}\cdot R(e_i,x_{\tau(5)},x_{\sigma(4)},x_{\tau(6)})
\end{array}
\right .\;.
\end{equation}
\end{lemma}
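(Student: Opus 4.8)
The plan is to expand the left-hand side by commuting the two symmetrized covariant derivatives past the Young symmetrizer in the top row, so as to isolate the three genuine traces of a linear two-jet (namely $\nabla^*\nabla R$, $\nabla\delta^\nabla R$ and $\nabla^{2)}\ric$) together with a purely quadratic correction in $R$ coming from the Ricci identity. Concretely, in the tensor $\nabla^{2)}_{x_1,x_3}R(x_5,x_2,x_6,x_4)$ the box $(2,2)$-antisymmetrization in slots $\{1,2\}$ and $\{3,4\}$ forces the differentiation entries $x_1,x_3$ into the antisymmetrized positions; one then performs the trace $\trace_{1,3}$ and splits the result according to whether, after antisymmetrization, the two contracted indices land in derivative slots, in curvature slots, or are split between the two. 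The first case yields (a multiple of) $\nabla^*\nabla R$, the ``mixed'' case yields $\nabla\delta^\nabla R$-terms, and the case where both contracted indices sit on the curvature factor produces a $\nabla^{2)}\ric$-term; every reordering of a pair $(\nabla_a,\nabla_b)\mapsto(\nabla_b,\nabla_a)$ that one must carry out to bring the expression into these normal forms costs exactly one commutator $R_{a,b}\cdot R$ by the Ricci identity, and these accumulate into the quadratic term $\sum_i R_{x_{\sigma(2)},e_i}\cdot R(e_i,\dots)$.

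The key steps, in order: (i) write out the action of the Young symmetrizer $\begin{array}{|c|c|}\cline{1-2}1&3\\\cline{1-2}2&4\\\cline{1-2}\end{array}$ explicitly as the sum of products of the two column antisymmetrizations with the row symmetrization, applied to $\nabla^{2)}_{x_1,x_3}R(x_5,x_2,x_6,x_4)$; (ii) contract over $x_1=x_3=e_i$ and sum, using the first Bianchi identity for the curvature slots and skew-symmetry to collapse the many terms, exactly as in the proof of Lemma~\ref{le:Young1} and Corollary~\ref{co:Young1}; (iii) for each surviving term, commute covariant derivatives as needed via the Ricci identity $\nabla^{2)}_{x,y}R - \nabla^{2)}_{y,x}R = R_{x,y}\cdot R$ to express it through $\nabla^*\nabla R$, $\nabla\delta^\nabla R$ (definition~\eqref{eq:divergence_of_the_curvature_tensor_of_a_linear_two-jet}) and $\nabla^{2)}\ric$ (definition~\eqref{eq:Ricci_tensor_of_a_linear_two-jet}); (iv) symmetrize the bookkeeping: since the left-hand side is manifestly symmetric under $\{2,4\}$ and under $\{5,6\}$ (the latter because $\nabla^{2)}\ R\in V^*\otimes V^*\otimes\scrC_0$ is symmetric in its two derivative arguments, and $\{2,4\}$-symmetry because the trace was taken over the two antisymmetrized columns), organize the answer as the stated sum over $(\sigma,\tau)\in\scrS_{\{2,4\}}\times\scrS_{\{5,6\}}$ and check that the coefficients $1,1,-2,-1$ are the ones produced. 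Throughout, the second Bianchi identity ($\nabla^{2)}R\in V^*\otimes\scrC_1$, equivalently~\eqref{eq:contracted_second_Bianchi_identity}) is what allows one to identify certain traces of $\nabla^{2)}R$ with $\nabla\delta^\nabla R$ rather than leaving them as independent contractions.

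The main obstacle will be the combinatorial reduction in step (ii)–(iii): keeping track of all the terms produced by the Young symmetrizer (there are twelve signed terms before contraction, each splitting further after the trace) and correctly matching each to its normal form while collecting the right number of Ricci-identity commutators. This is the same bookkeeping that appears in Lemma~\ref{le:Young1}, but now one of the two factors carries two derivatives, so the commutator terms are genuinely present rather than absent; the danger is a sign or multiplicity error in the coefficient of the quadratic term. A useful internal check is to contract~\eqref{eq:Young2} once more (take $\trace$ over, say, $x_2=x_4$) and compare with the known hierarchy~\eqref{eq:hirachy_1}–\eqref{eq:hirachy_3} of Lemma~\ref{le:hirachy} together with Corollary~\ref{co:Young1}: this pins down all coefficients. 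Once the identity is assembled, no further argument is needed — the statement is the bare identity.
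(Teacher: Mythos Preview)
Your plan is the same as the paper's: expand the $(2,2)$ Young symmetrizer explicitly on $\nabla^{2)}_{x_1,x_3}R(x_5,x_2,x_6,x_4)$, set $x_1=x_3=e_i$ and sum, then use the Ricci identity to rewrite the mixed terms as $\nabla_{x}\delta^\nabla R$ plus a commutator $R_{x,e_i}\cdot R$, and finally package everything over $\scrS_{\{2,4\}}\times\scrS_{\{5,6\}}$. The paper does exactly this, with no additional ideas.

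Two small corrections to your write-up, neither fatal. First, the Young symmetrizer for the $(2,2)$ tableau produces $16$ signed terms, not $12$ (two row transpositions times two column transpositions in each factor: $2\cdot2\cdot2\cdot2=16$); the paper's proof displays all sixteen. Second, your justification of the $\{5,6\}$-symmetry is wrong on two counts: $x_5,x_6$ sit in the \emph{curvature} slots of $R(x_5,x_2,x_6,x_4)$, not in the derivative slots (those are $x_1,x_3$, traced away); and $\nabla^{2)}R$ is \emph{not} symmetric in its derivative arguments --- the failure of that symmetry is precisely the Ricci identity you invoke elsewhere. The actual reason the left-hand side is $\{5,6\}$-symmetric is the pair symmetry $R(x_5,x_2,x_6,x_4)=R(x_6,x_4,x_5,x_2)$ combined with the $\{2,4\}$-symmetry coming from the row symmetrization in the Young projector. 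Finally, the second Bianchi identity is not actually needed here: the identifications with $\nabla\delta^\nabla R$ follow directly from the definition~\eqref{eq:divergence_of_the_curvature_tensor_of_a_linear_two-jet} together with the symmetries of $R$; only the Ricci identity enters, to commute $\nabla^{2)}_{e_i,x}$ to $\nabla^{2)}_{x,e_i}$.
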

\begin{proof}
$\trace_{1,3}\; \begin{array}{|c|c|}
\cline{1-2}
1 & 3 \\
\cline{1-2}
2 & 4 \\
\cline{1-2}
\end{array}\; \nabla^{2)}_{x_1,x_3}\, R(x_5,x_2,x_6,x_4)$
\begin{align*}
& = \sum_{i=1}^n  \left \lbrace
 \begin{array}{llll}
+ \nabla^{2)}_{e_i,e_i}\, R(x_5,x_2,x_6,x_4) & + \nabla^{2)}_{e_i,e_i}\, R(x_5,x_2,x_6,x_4) & + \nabla^{2)}_{e_i,e_i}\, R(x_5,x_4,x_6,x_2) & + \nabla^{2)}_{e_i,e_i}\, R(x_5,x_4,x_6,x_2)\\
- \nabla^{2)}_{x_2,e_i}\, R(x_5,e_i,x_6,x_4) & - \nabla^{2)}_{e_i,x_2}\, R(x_5,e_i,x_6,x_4) &- \nabla^{2)}_{x_2,e_i}\, R(x_5,x_4,x_6,e_i) & - \nabla^{2)}_{e_i,x_2}\, R(x_5,x_4,x_6,e_i)\\
- \nabla^{2)}_{e_i,x_4}\, R(x_5,x_2,x_6,e_i) & - \nabla^{2)}_{x_4,e_i}\, R(x_5,x_2,x_6,e_i) &- \nabla^{2)}_{e_i,x_4}\, R(x_5,e_i,x_6,x_2) & - \nabla^{2)}_{x_4,e_i}\, R(x_5,e_i,x_6,x_2)\\
+ \nabla^{2)}_{x_2,x_4}\, R(x_5,e_i,x_6,e_i) & + \nabla^{2)}_{x_4,x_2}\, R(x_5,e_i,x_6,e_i) &+ \nabla^{2)}_{x_2,x_4}\, R(x_5,e_i,x_6,e_i) & + \nabla^{2)}_{x_4,x_2}\, R(x_5,e_i,x_6,e_i)
\end{array}
 \right .\\
& =\left \lbrace 
\begin{array}{ll} 
 - 2\,\nabla^*\nabla R(x_5,x_2,x_6,x_4) & - 2\,\nabla^*\nabla\, R(x_5,x_4,x_6,x_2) \\
 + 2\,\nabla_{x_2}\delta_{x_5} R(x_4,x_6) & + \sum_{i=1}^n R_{x_2,e_i}\cdot R(e_i,x_5,x_4,x_6)\\
 + 2\,\nabla_{x_2}\delta_{x_6} R(x_4,x_5) & + \sum_{i=1}^n  R_{x_2,e_i}\cdot R(e_i,x_6,x_4,x_5)\\
 + 2\,\nabla_{x_4}\delta_{x_6}R(x_2,x_5)&  + \sum_{i=1}^n R_{x_4,e_i}\cdot R(e_i,x_6,x_2,x_5) \\
 + 2\,\nabla_{x_4}\delta_{x_5}R(x_2,x_6) &  + \sum_{i=1}^n  R_{x_4,e_i}\cdot R(e_i,x_5,x_2,x_6) \\
 - 2\,\nabla^{2)}_{x_2,x_4}\ric(x_5,x_6) & - 2\, \nabla^{2)}_{x_4,x_2}\ric(x_5,x_6)
\end{array}
\right .\;.
\end{align*}
\end{proof}

\bigskip
\begin{proposition}\label{p:Young1}
For an algebraic two-jet $(R,\nabla R,\nabla^{2)} R)$ consider the associated second covariant derivative of the 
Ricci tensor~\eqref{eq:definition_of_the_second_covariant_derivative_of_the_Ricci_tensor_of_the_associated_linear_two-jet}. We have
\begin{equation}\label{eq:Ricci_tensor_of_the-associated_linear_two-jet_1}
\tilde\nabla^{2)}_{x_5,x_6}\tilde \ric(x_2,x_4) = 2\, \sum_{\begin{array}{c}
 (\sigma,\tau)\in\\
\scrS_{\{2,4\}}\times\scrS_{\{5,6\}}\end{array}}\left \lbrace\begin{array}{l}
 - R*R(x_{\tau(5)},x_{\sigma(2)},x_{\tau(6)},x_{\sigma(4)})\\
 + 2\, R_{x_{\tau(5)},x_{\sigma(2)}}\cdot\ric(x_{\tau(6)},x_{\sigma(4)})\\ 
+ 10\, \nabla^{2)}_{x_{\tau(5)},x_{\tau(6)}}\ric(x_{\sigma(2)},x_{\sigma(4)})
\end{array} \right .
\end{equation}
\end{proposition}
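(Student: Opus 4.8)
The goal is to evaluate the trace $\trace_{1,3}$ of the Young-symmetrized second covariant derivative $\nabla^{2)}_{x_1,x_3}R(x_5,x_2,x_6,x_4)$ and to identify the right-hand side with the stated symmetrization of $-R*R$, the curvature term $R_{\cdot,\cdot}\cdot\ric$, and $\nabla^{2)}\ric$. The natural strategy is to start from Lemma~\ref{le:Young2}, which already expresses $-\trace_{1,3}\,\young(13,24)\,\nabla^{2)}_{x_1,x_3}R(x_5,x_2,x_6,x_4)$ as a sum over $\scrS_{\{2,4\}}\times\scrS_{\{5,6\}}$ of four kinds of terms: a rough-Laplacian term $\nabla^*\nabla R$, a second-derivative-of-Ricci term $\nabla^{2)}\ric$, a divergence term $\nabla\delta^\nabla R$, and a quadratic curvature term $\sum_i R_{\cdot,e_i}\cdot R(e_i,\cdot,\cdot,\cdot)$. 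So the proof is essentially a matter of rewriting each of these four families in the variables symmetrized over $(\sigma,\tau)$ using the hierarchy of trace identities from Section~4 together with the Weitzenböck formula.

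\textbf{Key steps, in order.} First I would recall the Young symmetrizer identity~\eqref{eq:eigenvalue}, specialized to $k=2$, to relate $\tilde\nabla^{2)}_{x_5,x_6}\tilde\ric$ (defined via the trace of the symmetrized tensor in~\eqref{eq:definition_of_the_second_covariant_derivative_of_the_Ricci_tensor_of_the_associated_linear_two-jet}) to the expression in Lemma~\ref{le:Young2}; since the symmetrizer acts on $\scrC_2$ by the scalar $80$, and since $\tilde\nabla^{2)}_{x_5,x_6}\tilde R$ lies in $\scrC_2$, I can read off that $\tilde\nabla^{2)}_{x_5,x_6}\tilde\ric(x_2,x_4)$ equals a fixed multiple of the left-hand side of~\eqref{eq:Young2} after the trace. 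Second, I would handle the divergence terms: by Lemma~\ref{le:hirachy}, specifically~\eqref{eq:hirachy_1}, each $\nabla_{x_{\sigma(2)}}\delta^\nabla_{x_{\tau(5)}}R(x_{\sigma(4)},x_{\tau(6)})$ is a difference of two $\nabla^{2)}\ric$ terms; after summing over the symmetry group $\scrS_{\{2,4\}}\times\scrS_{\{5,6\}}$ these collapse into a multiple of the fully symmetrized $\nabla^{2)}\ric$. Third, I would handle the rough-Laplacian term: by~\eqref{eq:Weitzenböck_spezial} (Corollary~\ref{co:Weitzenböck_spezial}), $\nabla^*\nabla R$ is a combination of the Young-symmetrized $\nabla^{2)}\ric$ and of $R*R$; feeding this in replaces the $\nabla^*\nabla R$ terms by $R*R$ plus yet more $\nabla^{2)}\ric$. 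Fourth, I would convert the quadratic term $\sum_i R_{x_{\sigma(2)},e_i}\cdot R(e_i,x_{\tau(5)},x_{\sigma(4)},x_{\tau(6)})$ into the Jacobi-operator form of $R*R$ using~\eqref{eq:Jacobioperator_of_R*R_alternativ} together with~\eqref{eq:R*R_alternativ}; note that $\sum_i R_{x,e_i}\cdot R(e_i,y,z,w)$ is exactly the building block appearing in~\eqref{eq:R*R_alternativ}, so after symmetrizing it too becomes a combination of $R*R$ and $R_{\cdot,\cdot}\cdot\ric$. Finally I would collect all contributions, keeping careful track of the multiplicities produced by the four-element group $\scrS_{\{2,4\}}\times\scrS_{\{5,6\}}$, and check that the coefficients assemble into the asserted $2\bigl(-R*R + 2\,R_{\cdot,\cdot}\cdot\ric + 10\,\nabla^{2)}\ric\bigr)$.

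\textbf{Main obstacle.} The conceptual content is entirely contained in Lemmas~\ref{le:hirachy}, \ref{le:Young2} and Corollary~\ref{co:Weitzenböck_spezial}; the genuine difficulty is bookkeeping. The term in Lemma~\ref{le:Young2} is itself written after a messy Young-symmetrization, and substituting the Weitzenböck formula~\eqref{eq:Weitzenböck_spezial} (which carries its own Young symmetrizer $\young(13,24)$ applied to $\nabla^{2)}\ric$) risks double-counting or sign errors, because one must be careful that the symmetrization over $\scrS_{\{2,4\}}\times\scrS_{\{5,6\}}$ in~\eqref{eq:Young2} is compatible with — and not redundant relative to — the symmetrizer built into~\eqref{eq:Weitzenböck_spezial}. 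In particular one must verify that the curvature term $R_{x_2,x_4}\cdot\ric(x_1,x_3)$ appearing in the Weitzenböck formula, after being fed through the outer symmetrization, produces exactly the coefficient $2$ in front of $R_{x_{\tau(5)},x_{\sigma(2)}}\cdot\ric(x_{\tau(6)},x_{\sigma(4)})$ and does not instead reshuffle into an $R*R$ contribution. I expect the cleanest way to avoid these pitfalls is to treat each of the four term-types in~\eqref{eq:Young2} separately, symmetrize it completely, express it in the common basis $\{R*R,\ R_{\cdot,\cdot}\cdot\ric,\ \nabla^{2)}\ric\}$ using only the already-established identities, and only add up at the very end; the factor $80$ from~\eqref{eq:eigenvalue} then fixes the overall normalization and the claimed coefficients $-1$, $2$, $10$ fall out.
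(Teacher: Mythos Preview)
Your plan has a genuine gap at the very first step, and it propagates through the rest.

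You want to invoke~\eqref{eq:eigenvalue} with $k=2$ to say that the $(4,2)$ Young symmetrizer acts by the scalar $80$, and hence that $\tilde\nabla^{2)}_{x_5,x_6}\tilde\ric(x_2,x_4)$ is a fixed multiple of the left-hand side of~\eqref{eq:Young2}. But~\eqref{eq:eigenvalue} holds only for tensors already lying in $\scrC_2$, and for an algebraic two-jet $\nabla^{2)}R$ is \emph{not} in $\scrC_2$: the Ricci identity $\nabla^{2)}_{x,y}R-\nabla^{2)}_{y,x}R=R_{x,y}\cdot R$ is nonzero in general, so $\nabla^{2)}R$ is not symmetric in the first two slots. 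Applying the symmetrizer to $\tilde\nabla^{2)}\tilde R$ does give $80\,\tilde\nabla^{2)}\tilde R$, but that is a tautology; it tells you nothing about how to \emph{compute} $\tilde\nabla^{2)}\tilde R$ from $\nabla^{2)}R$.

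Concretely, the $(4,2)$ Young symmetrizer in the definition~\eqref{eq:definition_of_the_second_covariant_derivative_of_the_Ricci_tensor_of_the_associated_linear_two-jet} symmetrizes over the full row $\{1,3,5,6\}$, not just $\{1,3\}$. The paper handles this via the coset decomposition~\eqref{eq:symmetrizer} of $\scrS_{\{5,6\}}\backslash\scrS_{\{1,3,5,6\}}$, which breaks the traced symmetrizer into five pieces. Lemma~\ref{le:Young2} accounts only for the two pieces coming from the permutations $(1\,5)(3\,6)$ and $(1\,6)(3\,5)$. The remaining three pieces --- the $\scrS_{\{1,3,5\}}$-, $\scrS_{\{1,3,6\}}$-, and $\scrS_{\{1,3\}}$-summands --- are evaluated separately using~\eqref{eq:eigenvalue} for $k=0,1$ together with Corollary~\ref{co:Young1} (equation~\eqref{eq:Young1}), and they produce, for instance, the $48\,\nabla^{2)}\ric$ and $12\,R_{x_5,x_6}\cdot\ric$ contributions that your outline never generates. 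Without these pieces the bookkeeping cannot close to the coefficients $-1,2,10$. Once those three extra blocks are in place, the remainder of your plan (substituting~\eqref{eq:hirachy_1}, the Weitzenb\"ock formula~\eqref{eq:Weitzenböck}, and rewriting the quadratic terms via~\eqref{eq:Jacobioperator_of_R*R}) is exactly what the paper does.
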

\begin{proof}
Let $\rmS_{\{1,3,5,6\}}$\,, $\scrS_{\{1,3,5\}}$\,, $\scrS_{\{1,3,6\}}$ and $\rmS_{\{5,6\}}$  denote the symmetric groups over
$\{1,3,5,6\}$\,, $\{1,3,5\}$\,, $\{1,3,6\}$ and $\{5,6\}$\,, respectively. In particular, $\rmS_{\{5,6\}}$ is a subgroup of $\rmS_{\{1,3,5,6\}}$\,. Let $\scrS_{\{5,6\}}\backslash \rmS_{\{1,3,5,6\}}$ be the set of right cosets. In the free vector space over $\scrS_{\{5,6\}}\backslash \rmS_{\{3,5,6\}}$ 
\begin{align}\label{eq:symmetrizer}
& \sum_{[\pi]\in\scrS_{\{5,6\}\backslash \scrS_{\{1,3,5,6\}}}} [\pi] = \sum_{\pi\in \scrS_{\{1,3,6\}}} [\pi] + \sum_{\pi\in \scrS_{\{1,3,5\}}}[\pi] - \sum_{\pi\in \scrS_{\{1,3\}}} [\pi] + [(1\, 5)(3\, 6)] + [(1\, 6)(3\, 5)]
\end{align}
(see~\cite[Eq.~46 on p.~13]{J1}.) Thus l.h.s. of~\eqref{eq:Ricci_tensor_of_the-associated_linear_two-jet_1} is given by
\begin{align*}
&\underbrace{-\trace_{1,3}\; \begin{array}{|c|c|c|}
\cline{1-3}
1 & 3 & 5\\
\cline{1-3}
2 & 4 & \multicolumn{1}{c}{\;\;\;} \\
\cline{1-2}
\end{array}\; \underbrace{\nabla^{2)}_{x_6,x_5} R(x_1,x_2,x_3,x_4) +
  \nabla^{2)}_{x_5,x_6} R(x_1,x_2,x_3,x_4)}_{= 2\,\nabla^{2)}_{x_6,x_5}   R(x_1,x_2,x_3,x_4) + R_{x_5,x_6}\cdot R(x_1,x_2,x_3,x_4)}}_{\stackrel{\eqref{eq:eigenvalue},\eqref{eq:Young1}}
{=} 48\,\nabla^{2)}_{x_6,x_5}\ric(x_2,x_4) + 12\,R_{x_6,x_5}\cdot \ric(x_2,x_4) + 6\,\left\lbrace\begin{array}{l}
+ \sum_{i=1}^nR_{x_6,e_i}\cdot R(e_i,x_2,x_5,x_4)\\ + \sum_{i=1}^n R_{x_6,e_i}\cdot R(e_i,x_4,x_5,x_2) \\
 + \sum_{i=1}^n R_{x_6,x_2}\cdot \ric(x_4,x_5)\\ + \sum_{i=1}^n R_{x_6,x_4}\cdot \ric(x_2,x_5)
\end{array}\right .}\\
& \underbrace{- \trace_{1,3}\; \begin{array}{|c|c|c|}
\cline{1-3}
1 & 3 & 6\\
\cline{1-3}
2 & 4 & \multicolumn{1}{c}{\;\;\;} \\
\cline{1-2}
\end{array}\; \nabla^{2)}_{x_5,x_6}R(x_1,x_2,x_3,x_4) +
\nabla^{2)}_{x_6,x_5}R(x_1,x_2,x_3,x_4)}_{\stackrel{\eqref{eq:eigenvalue},\eqref{eq:Young1}}
  {=} 48\,\nabla^{2)}_{x_5,x_6}\ric(x_2,x_4) + 12\,R_{x_5,x_6}\cdot \ric(x_2,x_4) + 6\, \left\lbrace\begin{array}{l}
+ \sum_{i=1}^n R_{x_5,e_i}\cdot R(e_i,x_2,x_6,x_4)\\ + \sum_{i=1}^n R_{x_5,e_i}\cdot R(e_i,x_4,x_6,x_2) \\
+ R_{x_5,x_2}\cdot \ric(x_4,x_6)\\ + R_{x_5,x_4}\cdot \ric(x_2,x_6)
\end{array}\right .}\\
& + \underbrace{\trace_{1,3}\; \begin{array}{|c|c|}
\cline{1-2}
1 & 3 \\
\cline{1-2}
2 & 4 \\
\cline{1-2}
\end{array}\; \nabla^{2)}_{x_5,x_6}R(x_1,x_2,x_3,x_4) +
\nabla^{2)}_{x_6,x_5}R(x_1,x_2,x_3,x_4)}_{\stackrel{\eqref{eq:eigenvalue}}{=}- 12(\nabla^{2)}_{x_5,x_6} \ric(x_2,x_4) + \nabla^{2)}_{x_6,x_5} \ric(x_2,x_4))}\\
& \underbrace{- \trace_{1,3}\;\begin{array}{|c|c|}
\cline{1-2}
1 & 3 \\
\cline{1-2}
2 & 4 \\
\cline{1-2}
\end{array}\; \big (\nabla^{2)}_{x_1,x_3}\, R(x_5,x_2,x_6,x_4) + \nabla^{2)}_{x_1,x_3}\, R(x_6,x_2,x_5,x_4)\big )}_{
\stackrel{\eqref{eq:Young2}}{=} 2\, \left ( 
\begin{array}{ll} 
 + 2\,\nabla^*\nabla R(x_5,x_2,x_6,x_4) & + 2\,\nabla^*\nabla\, R(x_6,x_2,x_5,x_4) \\
 - 2\,\nabla_{x_2}\delta_{x_5} R(x_4,x_6) & - \sum_{i=1}^n R_{x_2,e_i}\cdot R(e_i,x_5,x_4,x_6)\\
 - 2\,\nabla_{x_2}\delta_{x_6} R(x_4,x_5) & - \sum_{i=1}^n  R_{x_2,e_i}\cdot R(e_i,x_6,x_4,x_5)\\
 - 2\,\nabla_{x_4}\delta_{x_6}R(x_2,x_5)&  - \sum_{i=1}^n R_{x_4,e_i}\cdot R(e_i,x_6,x_2,x_5) \\
 - 2\,\nabla_{x_4}\delta_{x_5}R(x_2,x_6) &  - \sum_{i=1}^n  R_{x_4,e_i}\cdot R(e_i,x_5,x_2,x_6) \\
 + 2\,\nabla^{2)}_{x_2,x_4}\ric(x_5,x_6) & + 2\, \nabla^{2)}_{x_4,x_2}\ric(x_5,x_6)
\end{array}
\right )}\;.
\end{align*}
Further, recall that
\begin{align}
& \frac{1}{4} \sum_{\begin{array}{c}
(\sigma,\tau)\in\\
\scrS_{\{2,4\}}\times\scrS_{\{5,6\}} \end{array}} R*R(x_{\tau(6)},x_{\sigma(2)},x_{\tau(5)},,x_{\sigma(4)})\\
& \stackrel{\eqref{eq:Jacobioperator_of_R*R}}{=} - \sum_{i=1}^n\left \lbrace \begin{array}{llll} & R_{x_6,e_i}\cdot R(e_i,x_2,x_5,x_4) & + &  R_{x_6,e_i}\cdot R(e_i,x_4,x_5,x_2) \\
+ & R_{x_5,e_i}\cdot R(e_i,x_2,x_6,x_4) & + & R_{x_5,e_i}\cdot R(e_i,x_4,x_6,x_2)
\end{array}  \right . \\
& \stackrel{\eqref{eq:Jacobioperator_of_R*R}}{=} - \sum_{i=1}^n\left \lbrace \begin{array}{llll} & R_{x_2,e_i} \cdot R(e_i,x_5,x_4,x_6) & + & R_{x_2,e_i}\cdot R(e_i,x_6,x_4,x_5) \\
 + & R_{x_4,e_i}\cdot R(e_i,x_6,x_2,x_5)  & + & R_{x_4,e_i}\cdot R(e_i,x_5,x_2,x_6) \end{array} \right .\;.
\end{align}

We conclude that \begin{equation*}
\tilde\nabla^{2)}_{x_5,x_6}\tilde \ric(x_2,x_4) = \sum_{(\sigma,\tau)\in\scrS_{\{2,4\}}\times\scrS_{\{5,6\}}} \left \lbrace\begin{array}{l}
 2\;\nabla^*\nabla R(x_{\tau(5)},x_{\sigma(2)},x_{\tau(6)},x_{\sigma(4)})\\ - R*R(x_{\tau(5)},x_{\sigma(2)},x_{\tau(6)},x_{\sigma(4)})\\
 + 6\, R_{x_{\tau(5)},x_{\sigma(2)}}\cdot\ric(x_{\tau(6)},x_{\sigma(4)})\\  - 4\,\nabla_{x_{\sigma(2)}}\delta^\nabla_{x_{\tau(5)}} R(x_{\sigma(4)},x_{\tau(6)})\\
 + 2\,  \nabla^{2)}_{x_{\sigma(2)},x_{\sigma(4)}}\ric(x_{\tau(5)},x_{\tau(6)})\\ + 18\, \nabla^{2)}_{x_{\tau(5)},x_{\tau(6)}}\ric(x_{\sigma(2)},x_{\sigma(4)})
 \end{array} \right .\;.
\end{equation*}
Further, 
\begin{align*}
& \sum_{\begin{array}{c}
 (\sigma,\tau)\in\\
\scrS_{\{2,4\}}\times\scrS_{\{5,6\}}\end{array}} \nabla^*\nabla R(x_{\tau(5)},x_{\sigma(2)},x_{\tau(6)},x_{\sigma(4)})\\
& \stackrel{\eqref{eq:Weitzenböck}}{=} \sum_{\begin{array}{c}
 (\sigma,\tau)\in\\
\scrS_{\{2,4\}}\times\scrS_{\{5,6\}}\end{array}}\big (
\underbrace{\d^\nabla\delta^\nabla_{x_{\tau(5)},x_{\sigma(2)}}R(x_{\tau(6)},x_{\sigma(4)})
}_{= \left \lbrace \begin{array}{l}
- \nabla_{x_{\sigma(2)}}\delta^\nabla_{x_{\tau(5)}}R(x_{\tau(6)},x_{\sigma(4)})\\
 + \nabla^2_{x_{\tau(5)},x_{\tau(6)}}\ric(x_{\sigma(2)},x_{\sigma(4)})\\
 - \nabla^2_{x_{\tau(5)},x_{\sigma(4)}}\ric(x_{\sigma(2)},x_{\tau(6)})
\end{array}\right .} -  \frac{1}{2} R*R(x_{\tau(5)},x_{\sigma(2)},x_{\tau(6)},x_{\sigma(4)}) \big ) \;.
\end{align*}
We thus see that 
\begin{equation*}
 \begin{array}{lll} \tilde\nabla^{2)}_{x_5,x_6}\tilde \ric(x_2,x_4) & = & 2\, \sum_{(\sigma,\tau)\in\scrS_{\{2,4\}}\times\scrS_{\{5,6\}}} \left \lbrace\begin{array}{l}
 - R*R(x_{\tau(5)},x_{\sigma(2)},x_{\tau(6)},x_{\sigma(4)}) + 3\, R_{x_{\tau(5)},x_{\sigma(2)}}\cdot\ric(x_{\tau(6)},x_{\sigma(4)})\\ 
\underbrace{- \nabla_{x_{\sigma(2)}}\delta^\nabla_{x_{\tau(5)}} R(x_{\sigma(4)},x_{\tau(6)}) +  \nabla^{2)}_{x_{\sigma(2)},x_{\sigma(4)}}\ric(x_{\tau(5)},x_{\tau(6)})}_{=   \nabla^{2)}_{x_{\sigma(2)},x_{\tau(6)}}\ric(x_{\tau(5)},x_{\sigma(4)})}\\ 
- \nabla^2_{x_{\tau(5)},x_{\sigma(4)}}\ric(x_{\sigma(2)},x_{\tau(6)})\\
+ 10\, \nabla^{2)}_{x_{\tau(5)},x_{\tau(6)}}\ric(x_{\sigma(2)},x_{\sigma(4)})
 \end{array} \right .\\
 & = & 2 \sum_{(\sigma,\tau)\in\scrS_{\{2,4\}}\times\scrS_{\{5,6\}}} \left \lbrace\begin{array}{l}
 - R*R(x_{\tau(5)},x_{\sigma(2)},x_{\tau(6)},x_{\sigma(4)}) + 3\, R_{x_{\tau(5)},x_{\sigma(2)}}\cdot\ric(x_{\tau(6)},x_{\sigma(4)})\\ 
+ \underbrace{\nabla^{2)}_{x_{\sigma(2)},x_{\tau(6)}}\ric(x_{\tau(5)},x_{\sigma(4)}) - \nabla^2_{x_{\tau(5)},x_{\sigma(4)}}\ric(x_{\sigma(2)},x_{\tau(6)})}_{R_{x_{\sigma(2)},x_{\tau(6)}}\cdot \ric(x_{\tau(5)},x_{\sigma(4)})}\\
+ 10\, \nabla^{2)}_{x_{\tau(5)},x_{\tau(6)}}\ric(x_{\sigma(2)},x_{\sigma(4)})
\end{array} \right .
\end{array}
\end{equation*}
The result follows from Lemma~\ref{le:Jacobioperator_of_R*R} together with~\eqref{eq:definition_of_the_second_covariant_derivative_of_the_Ricci_tensor_of_the_associated_linear_two-jet}.
\end{proof} 

We thus conclude
from~\eqref{eq:Ricci_tensor_of_the-associated_linear_two-jet_1} in combination
with the Ricci identity that~\eqref{eq:difference_1} is solved by
\begin{equation}
\label{eq:difference_2}
f(R) := - 80\, R_{x_5,x_6}\cdot \ric(x_{2},x_{4}) + 2\, \sum_{\begin{array}{c}
 (\sigma,\tau)\in\\
\scrS_{\{2,4\}}\times\scrS_{\{5,6\}}\end{array}}\left \lbrace\begin{array}{l}
- R*R(x_{\tau(5)},x_{\sigma(2)},x_{\tau(6)},x_{\sigma(4)})\\
 + 2\, R_{x_{\tau(5)},x_{\sigma(2)}}\cdot\ric(x_{\tau(6)},x_{\sigma(4)})
 \end{array} \right .\;.
\end{equation}

Using the special case~\eqref{eq:Weitzenböck_spezial_fuer_Einstein} of the Weitzenb\"ock formula we conclude from Proposition~\ref{p:Young1}:

\bigskip
\begin{corollary}\label{co:Einstein}
Let an algebraic Einstein two-jet $(R,\nabla R,\nabla^{2)} R)$ be given. Then
\begin{equation}
\tilde\nabla^{2)}_{x_5,x_6}\tilde \ric(x_2,x_4) =  - 4\,\big(R*R(x_5,x_2,x_6,x_4) + R*R(x_5,x_4,x_6,x_2)\big )\;.
\end{equation}
\end{corollary}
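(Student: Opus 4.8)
The plan is to specialize the general identity~\eqref{eq:Ricci_tensor_of_the-associated_linear_two-jet_1} of Proposition~\ref{p:Young1} to an Einstein two-jet and then to collapse the resulting permutation sum by invoking the symmetries of $R*R$. Recall that an Einstein two-jet satisfies $\ric\in\R\,\gg$, $\nabla\ric = 0$ and $\nabla^{2)}\ric = 0$; of these, only $\ric\in\R\,\gg$ and $\nabla^{2)}\ric = 0$ will be used directly.

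First I would discard the summands on the right-hand side of~\eqref{eq:Ricci_tensor_of_the-associated_linear_two-jet_1} that involve the Ricci tensor. Since $\ric = c\,\gg$ for some $c\in\R$ and each $R_{x,y}$ is skew-adjoint, the derivation rule recalled after~\eqref{eq:def_of_R*R} yields $R_{x,y}\cdot\ric = c\,R_{x,y}\cdot\gg = 0$, because a skew-adjoint endomorphism annihilates the metric tensor; hence the four summands $2\,R_{x_{\tau(5)},x_{\sigma(2)}}\cdot\ric(x_{\tau(6)},x_{\sigma(4)})$ all vanish. Likewise $\nabla^{2)}\ric = 0$ annihilates the four summands $10\,\nabla^{2)}_{x_{\tau(5)},x_{\tau(6)}}\ric(x_{\sigma(2)},x_{\sigma(4)})$. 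What remains is
\[
\tilde\nabla^{2)}_{x_5,x_6}\tilde\ric(x_2,x_4) = -2\sum_{(\sigma,\tau)\in\scrS_{\{2,4\}}\times\scrS_{\{5,6\}}} R*R(x_{\tau(5)},x_{\sigma(2)},x_{\tau(6)},x_{\sigma(4)})\;.
\]
One may equally start from the penultimate display in the proof of Proposition~\ref{p:Young1}: there the terms $\nabla_{x_{\sigma(2)}}\delta^\nabla_{x_{\tau(5)}}R$ drop out by~\eqref{eq:hirachy_1} combined with $\nabla^{2)}\ric = 0$, and the substitution $2\,\nabla^*\nabla R - R*R = -2\,R*R$, which is precisely~\eqref{eq:Weitzenböck_spezial_fuer_Einstein}, leads to the same expression; this is the route the wording preceding the corollary alludes to.

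The final step is purely combinatorial. The four pairs $(\sigma,\tau)$ produce the tensors $R*R(x_5,x_2,x_6,x_4)$, $R*R(x_5,x_4,x_6,x_2)$, $R*R(x_6,x_2,x_5,x_4)$ and $R*R(x_6,x_4,x_5,x_2)$. Since $R*R$ is an algebraic curvature tensor (Corollary~\ref{co:Jacobioperator_of_R*R}), it satisfies the pair symmetry $R*R(a,b,c,d) = R*R(c,d,a,b)$, which identifies the third tensor with the second and the fourth with the first; the sum therefore equals $2\,R*R(x_5,x_2,x_6,x_4) + 2\,R*R(x_5,x_4,x_6,x_2)$, and multiplying by $-2$ gives the claim.

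I do not anticipate a real obstacle: the corollary is essentially a bookkeeping specialization of Proposition~\ref{p:Young1}. The one place to be careful is precisely this last collapse --- one must check that all summands involving the Ricci tensor genuinely disappear and that the four remaining $R*R$-terms combine to two via \emph{pair} symmetry, rather than, say, cancelling by the skew-symmetry $R(a,b,c,d)=-R(b,a,c,d)$; here it matters that $(\sigma,\tau)$ permutes the argument slots in the order $(5,2,6,4)$, so that the $\scrS_{\{5,6\}}$-transposition interchanges a block of two arguments.
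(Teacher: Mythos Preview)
Your proof is correct and matches the paper's intended argument: specialize Proposition~\ref{p:Young1} to the Einstein case, drop the $R\cdot\ric$ and $\nabla^{2)}\ric$ terms, and collapse the four $R*R$ summands to two via pair symmetry. Your parenthetical remark about the alternative route through the penultimate display and~\eqref{eq:Weitzenböck_spezial_fuer_Einstein} is exactly what the paper's one-line justification ``using the special case~\eqref{eq:Weitzenböck_spezial_fuer_Einstein} \ldots\ we conclude from Proposition~\ref{p:Young1}'' is pointing at. One minor quibble: the fact that $R*R$ is an algebraic curvature tensor (hence pair-symmetric) is stated right after~\eqref{eq:def_of_R*R}, not in Corollary~\ref{co:Jacobioperator_of_R*R}.
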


\subsection{The associated rough Laplacian} 
Let an algebraic two-jet $(R,\nabla R,\nabla^{2)})$ be given. We aim to calculate the rough Laplacian associated
with~\eqref{eq:associated_linear_two-jet_1} according
to~\eqref{eq:rough_Laplacian_of_a_linear_two-jet}, i.e. the algebraic
curvature tensor given by
\begin{equation}
\label{eq:rough_Laplacian_of_the_associated_linear_two-jet_1}
\tilde\nabla^*\tilde\nabla \tilde R(x_1,x_2,x_3,x_4) := - \trace_{5,6}\, \begin{array}{|c|c|c|c|}
\cline{1-4}
1 & 3 & 5 & 6 \\
\cline{1-4}
2 & 4 \\
\cline{1-2}
\end{array}\;\nabla^{2)}_{x_5,x_6}R(x_1,x_2,x_3,x_4)\;.
\end{equation}
Clearly, we will use~\eqref{eq:hirachy_3} to achieve this goal. Again it is a
priori clear 
that there exists some expression $g(R)$ quadratic in $R$ such that
\begin{equation}\label{eq:difference_3}
\tilde \nabla^*\tilde\nabla_{x_5,x_6}\ric (x_2,x_4) - 80\,
\nabla^*\nabla_{x_5,x_6}\ric (x_2,x_4) = g(R)\;.
\end{equation}

\bigskip
\begin{lemma}\label{le:Young3}
\begin{equation}
\label{eq:hilf4}
\begin{array}{|c|c|c|}
\cline{1-2}
1 & 3  \\
\cline{1-2}
2 & 4 \\
\cline{1-2}
\end{array}\; R_{x_1,x_2}\cdot\ric(x_3,x_4) = 0\;.
\end{equation}
\end{lemma}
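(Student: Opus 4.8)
The plan is to use two general facts. First, the Young symmetrizer attached to the diagram appearing in~\eqref{eq:hilf4} (the case $k=0$ of~\eqref{eq:Young_symmetrizer}) maps \emph{every} covariant $4$-tensor into the space $\scrC_0$ of algebraic curvature tensors, because its image is by definition the associated Schur functor $\scrC_0(V)$. Second, an algebraic curvature tensor $Q$ is zero as soon as its Jacobi operator vanishes; one way to see this without extra work is that~\eqref{eq:example_for_symmetrized-antisymmetrized} with $k=\ell=0$, combined with~\eqref{eq:eigenvalue}, exhibits any $Q\in\scrC_0$ as a nonzero multiple of the Kulkarni--Nomizu product $\owedge$ applied to its Jacobi operator. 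So it suffices to prove that the symmetrized tensor on the left of~\eqref{eq:hilf4}, evaluated on $(x_1,x_2,x_3,x_4)=(\xi,\eta,\xi,\eta)$, is zero for all $\xi,\eta\in V$.

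I would start by recording the two elementary symmetries of $T(x_1,x_2,x_3,x_4):=R_{x_1,x_2}\cdot\ric(x_3,x_4)$. It is antisymmetric in $(x_1,x_2)$, since $R_{x_1,x_2}=-R_{x_2,x_1}$, and symmetric in $(x_3,x_4)$, since the algebraic-derivation action of the skew-adjoint endomorphism $R_{x_1,x_2}$ preserves the symmetry of $\ric$; equivalently, $T(x_1,x_2,x_3,x_4)=\g{[R_{x_1,x_2},\Ric]\,x_3}{x_4}$ with $[R_{x_1,x_2},\Ric]$ self-adjoint, being the commutator of a skew-adjoint and a self-adjoint endomorphism. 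In particular $T$ vanishes whenever $x_1=x_2$.

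Now I would spell out the symmetrizer in the normalization fixed by~\eqref{eq:eigenvalue} --- symmetrize over the rows $\{1,3\}$ and $\{2,4\}$, then antisymmetrize over the columns $\{1,2\}$ and $\{3,4\}$ --- and evaluate at $(\xi,\eta,\xi,\eta)$. Using only the antisymmetry of $T$ in $(x_1,x_2)$, four of the sixteen terms vanish outright and the remaining twelve combine (after pairing up via that same antisymmetry) to $6\big(T(\xi,\eta,\xi,\eta)-T(\xi,\eta,\eta,\xi)\big)$; this is $0$ by the symmetry of $T$ in $(x_3,x_4)$. As a consistency check, the identical computation run on a genuine curvature tensor, which is \emph{anti}symmetric in the last pair, returns $12\,R(\xi,\eta,\xi,\eta)$, in agreement with~\eqref{eq:eigenvalue}. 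Since the symmetrization of $T$ lies in $\scrC_0$ and we have just shown its Jacobi operator vanishes, it is zero, which is~\eqref{eq:hilf4}. The computation itself is routine; the only point that requires attention is keeping the order of the row/column operations and the overall normalization consistent with~\eqref{eq:eigenvalue}. (If one prefers, the bookkeeping can be avoided by a representation-theoretic argument: $T$ lies in $\Lambda^2 V^*\otimes\Sym^2 V^*$, which by the Pieri rule is $\GL(V)$-isomorphic to $\mathbb{S}_{(3,1)}V^*\oplus\mathbb{S}_{(2,1,1)}V^*$ and hence has no summand of type $(2,2)$; since the Young symmetrizer annihilates every isotypic component of $\bigotimes^4 V^*$ not of type $(2,2)$, it annihilates $T$.)
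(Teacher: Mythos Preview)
Your proof is correct and takes a genuinely different route from the paper. The paper simply writes out all sixteen terms of the $(2,2)$ Young symmetrizer applied to $T(x_1,x_2,x_3,x_4)=R_{x_1,x_2}\cdot\ric(x_3,x_4)$ and observes that they cancel pairwise, using only the antisymmetry of $T$ in $(x_1,x_2)$ and its symmetry in $(x_3,x_4)$; no further structure of $R$ or $\ric$ is invoked.

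Your argument exploits the same two elementary symmetries of $T$ but organizes the computation more conceptually: since the symmetrizer lands in $\scrC_0$, it is enough to kill the Jacobi operator, and evaluating at $(\xi,\eta,\xi,\eta)$ collapses the sixteen terms to $6\big(T(\xi,\eta,\xi,\eta)-T(\xi,\eta,\eta,\xi)\big)=0$. Your consistency check against~\eqref{eq:eigenvalue} is a nice touch. The parenthetical representation-theoretic argument is the cleanest of all: once one notices that $T\in\Lambda^2V^*\otimes\Sym^2V^*$ and that by Pieri this module contains no copy of $\mathbb{S}_{(2,2)}V^*$, Schur's Lemma finishes the proof without any calculation. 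This last viewpoint also makes clear that the lemma has nothing to do with $R$ or $\ric$ specifically---it holds for \emph{any} tensor with those two symmetries---something that is implicit but not highlighted in the paper's term-by-term cancellation.
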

\begin{proof}
\begin{align*}
&\begin{array}{|c|c|c|}
\cline{1-2}
1 & 3 \\
\cline{1-2}
2 & 4 \\
\cline{1-2}
\end{array}\; R_{x_2,x_1}\cdot\ric(x_3,x_4) = \left \lbrace \begin{array}{l} R_{x_1,x_2}\cdot\ric(x_3,x_4) + R_{x_1,x_4}\cdot\ric(x_3,x_2)\\
+ R_{x_3,x_2}\cdot\ric(x_1,x_4) + R_{x_3,x_4}\cdot\ric(x_1,x_2)\\
- R_{x_2,x_1}\cdot\ric(x_3,x_4) - R_{x_2,x_4}\cdot\ric(x_3,x_1)\\
- R_{x_3,x_1}\cdot\ric(x_2,x_4) - R_{x_3,x_4}\cdot\ric(x_2,x_1)\\
- R_{x_1,x_2}\cdot\ric(x_4,x_3) - R_{x_1,x_3}\cdot\ric(x_4,x_2)\\
- R_{x_4,x_2}\cdot\ric(x_1,x_3) - R_{x_4,x_3}\cdot\ric(x_1,x_2)\\
+ R_{x_2,x_1}\cdot\ric(x_4,x_3) + R_{x_2,x_3}\cdot\ric(x_4,x_1)\\
+ R_{x_4,x_1}\cdot\ric(x_2,x_3) + R_{x_4,x_3}\cdot\ric(x_2,x_1)
\end{array}\right \rbrace = 0\;.
\end{align*}
\end{proof}

\bigskip
\begin{proposition}\label{p:rough_Laplacian_of_the_associated_two-jet}
Let an algebraic two-jet $(R,\nabla R,\nabla^{2)} R)$ be
given. Consider the rough Laplacian~\eqref{eq:rough_Laplacian_of_a_linear_two-jet}
associated with the linear two-jet~\eqref{eq:associated_linear_two-jet_1}. We have
\begin{equation}\label{eq:Laplace_der_symmetrisierten_zweiten_kovarianten_Ableitung}
\tilde \nabla^*\tilde \nabla \tilde R(x_1,x_2,x_3,x_4) = 80\, \nabla^*\nabla R(x_1,x_2,x_3,x_4) + 16\, R*R(x_1,x_2,x_3,x_4)\;.
\end{equation}
\end{proposition}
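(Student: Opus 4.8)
The plan is to reduce $\tilde\nabla^*\tilde\nabla\tilde R$ to the associated second covariant derivative of the Ricci tensor, which is already at our disposal through Proposition~\ref{p:Young1}. Since $\tilde\nabla^{2)}\tilde R\in\scrC_2$, we regard it as a linear two-jet (with vanishing underlying algebraic curvature tensor) and apply Lemma~\ref{le:hirachy} to it: combining~\eqref{eq:hirachy_3} with~\eqref{eq:hirachy_1}, equivalently invoking~\eqref{eq:hirachy_2}, gives
\[
\tilde\nabla^*\tilde\nabla\tilde R(x_1,x_2,x_3,x_4)=\frac14\;\begin{array}{|c|c|}
\cline{1-2}
1 & 3 \\
\cline{1-2}
2 & 4 \\
\cline{1-2}
\end{array}\;\tilde\nabla^{2)}_{x_1,x_3}\tilde\ric(x_2,x_4)\;.
\]
So it remains to substitute the expression~\eqref{eq:Ricci_tensor_of_the-associated_linear_two-jet_1} for $\tilde\nabla^{2)}\tilde\ric$ (after renaming the derivative slots $5,6$ as $1,3$) and to evaluate the $2\times2$ Young symmetrizer on the three families of terms occurring in it.

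Let $b$ denote the row symmetrizer of the diagram, i.e. the sum over $\scrS_{\{1,3\}}\times\scrS_{\{2,4\}}$. Since the $2\times2$ Young symmetrizer factors through $b$ on the right and $b\circ b=4\,b$, applying $\tfrac14$ times the Young symmetrizer to $b(X)$ returns the Young symmetrizer applied to $X$. The summation over $(\sigma,\tau)\in\scrS_{\{2,4\}}\times\scrS_{\{1,3\}}$ in~\eqref{eq:Ricci_tensor_of_the-associated_linear_two-jet_1} is exactly an application of $b$, so $\tilde\nabla^*\tilde\nabla\tilde R$ becomes $-2$, $4$, and $20$ times the $2\times2$ Young symmetrizer applied, respectively, to $R*R(x_1,x_2,x_3,x_4)$, to $R_{x_1,x_2}\cdot\ric(x_3,x_4)$, and to $\nabla^{2)}_{x_1,x_3}\ric(x_2,x_4)$ — the coefficients $-2,4,20$ being the prefactors $2\cdot(-1)$, $2\cdot 2$, $2\cdot 10$ read off from~\eqref{eq:Ricci_tensor_of_the-associated_linear_two-jet_1}.

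These three terms are then evaluated separately. The middle one vanishes by Lemma~\ref{le:Young3}. The first equals $-24\,R*R$, since $R*R$ is an algebraic curvature tensor and the $2\times2$ Young symmetrizer acts on $\scrC_0$ by multiplication by $12$ (the case $k=0$ of~\eqref{eq:eigenvalue}). In the third term $\ric$ is the Ricci tensor of the \emph{genuine} two-jet $(R,\nabla R,\nabla^{2)}R)$, so the correct tool is the Weitzenböck formula~\eqref{eq:Weitzenböck_spezial} — not the linear-jet relation~\eqref{eq:hirachy_2} — by which the $2\times2$ Young symmetrizer applied to $\nabla^{2)}_{x_1,x_3}\ric(x_2,x_4)$ equals $4\,\nabla^*\nabla R+2\,R*R$; this contributes $80\,\nabla^*\nabla R+40\,R*R$. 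Summing up, $\tilde\nabla^*\tilde\nabla\tilde R=80\,\nabla^*\nabla R+40\,R*R-24\,R*R=80\,\nabla^*\nabla R+16\,R*R$.

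The only delicate point is this last evaluation: because Proposition~\ref{p:Young1} provides $\tilde\nabla^{2)}\tilde\ric$ partly through $\nabla^{2)}\ric$ of the underlying two-jet, the correction term $-\tfrac12\,R*R$ of~\eqref{eq:Weitzenböck_spezial} has to be carried along, and it is precisely this term that turns the naive coefficient $-24$ of $R*R$ into the asserted $+16$. Everything else is bookkeeping with the combinatorial multiplicities of~\eqref{eq:Ricci_tensor_of_the-associated_linear_two-jet_1} and with the factorization $\text{(column antisymmetrizer)}\circ b$ of the $2\times2$ Young symmetrizer.
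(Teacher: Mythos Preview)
Your proof is correct and follows essentially the same route as the paper's own argument: reduce to $\tfrac14$ times the $2\times2$ Young symmetrizer of $\tilde\nabla^{2)}\tilde\ric$ via~\eqref{eq:hirachy_2}, insert Proposition~\ref{p:Young1}, and evaluate the three resulting terms using~\eqref{eq:eigenvalue}, Lemma~\ref{le:Young3}, and~\eqref{eq:Weitzenböck_spezial}. Your explicit justification of the coefficients $-2,4,20$ through the identity $b\circ b=4b$ for the row symmetrizer is a welcome clarification of a step the paper leaves implicit.
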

\begin{proof}
According to the last heorem,~\eqref{eq:Weitzenböck_spezial} and Lemma~\ref{le:Young3} left hand side
of~\eqref{eq:Laplace_der_symmetrisierten_zweiten_kovarianten_Ableitung} is given by 
\begin{align*}
& \tilde \nabla^*\tilde \nabla \tilde R(x_1,x_2,x_3,x_4) & \stackrel{\eqref{eq:hirachy_2}}{=} &\frac{1}{4}\; \begin{array}{|c|c|}
\cline{1-2}
1 & 3 \\
\cline{1-2}
2 & 4 \\
\cline{1-2}
\end{array}\; \tilde \nabla^{2)}_{x_1,x_3}\tilde \ric(x_2,x_4)\\
& \stackrel{\eqref{eq:Ricci_tensor_of_the-associated_linear_two-jet_1}}{=}  \underbrace{- 2\, \begin{array}{|c|c|}
\cline{1-2}
1 & 3 \\
\cline{1-2}
2 & 4 \\
\cline{1-2}
\end{array}\; R*R(x_1,x_2,x_3,x_4)}_{\stackrel{\eqref{eq:eigenvalue}}{=} -24\, R*R(x_1,x_2,x_3,x_4)}
& + & \underbrace{4\, \begin{array}{|c|c|}
\cline{1-2}
1 & 3 \\
\cline{1-2}
2 & 4 \\
\cline{1-2}
\end{array}\;R_{x_1,x_2}\cdot\ric(x_3,x_4)}_{\stackrel{\eqref{eq:hilf4}}{=}0}\\
& + \underbrace{20 \, \begin{array}{|c|c|}
\cline{1-2}
1 & 3 \\
\cline{1-2}
2 & 4 \\
\cline{1-2}
\end{array}\ \nabla^{2)}_{x_1,x_3}\ric(x_{2},x_{4})}_{\stackrel{\eqref{eq:Weitzenböck_spezial}}{=}
80\,\nabla^*\nabla R(x_1,x_2,x_3,x_4) + 40\, R*R(x_1,x_2,x_3,x_4)}& & 
\end{align*}
The result follows.
\end{proof}

\section{The canonical embedding of Ricci-flat curvature tensors}
Given the algebraic curvature tensor $R$\,, we set
\begin{equation}\label{eq:associated_linear_two-jet_2}
\hat \nabla^{2)}_{x_5,x_6} \hat R(x_1,x_2,x_3,x_4) := \begin{array}{|c|c|c|c|}
\cline{1-4}
1 & 3 & 5 & 6 \\
\cline{1-4}
2 & 4 \\
\cline{1-2}
\end{array}\;\g{x_5}{x_6}R(x_1,x_2,x_3,x_4)\;.
\end{equation}
Then $\hat\nabla^{2)}\hat R\in\scrC_2$ and the linear map 
\begin{equation}\label{eq:iota}
\iota:\scrC_0 \to \scrC_2, R \mapsto \hat \nabla^{2)} \hat R
\end{equation}
is an embedding according to the arguments given in the proof of
Lemma~\ref{le:linear_jacobi_relation_alternativ}. In the following we suppose
further that $\ric = 0$\,. By definition, the second covariant derivative of
the Ricci-tensor and the rough Laplacian associated with $\hat\nabla^{2)}\hat R$ according
to~\eqref{eq:Ricci_tensor_of_a_linear_two-jet} are given by
\begin{align}\label{eq:Ricci_tensor_of_the_associated_linear_two-jet_2}
\hat \nabla^{2)}_{x_5,x_6}\hat\ric(x_2,x_4) = - \trace_{1,3}\,\begin{array}{|c|c|c|c|}
\cline{1-4}
1 & 3 & 5 & 6 \\
\cline{1-4}
2 & 4 \\
\cline{1-2}
\end{array}\;\g{x_5}{x_6}R(x_1,x_2,x_3,x_4)\;,\\
\label{eq:rough_Laplacian_of_the_associated_linear_two-jet_2}
\hat\nabla^*\hat\nabla\hat R(x_1,x_2,x_3,x_4) = - \trace_{5,6}\,\begin{array}{|c|c|c|c|}
\cline{1-4}
1 & 3 & 5 & 6 \\
\cline{1-4}
2 & 4 \\
\cline{1-2}
\end{array}\;\g{x_5}{x_6}R(x_1,x_2,x_3,x_4)\;.
\end{align}

\bigskip
\begin{lemma}
Let an algebraic curvature tensor $R$ with vanishing Ricci tensor be given. Then
\begin{equation}\label{eq:Young3}
\trace_{1,3}\;\begin{array}{|c|c|c|}
\cline{1-2}
1 & 3 \\
\cline{1-2}
2 & 4 \\
\cline{1-2}
\end{array}\; \g{x_1}{x_6}\, R(x_3,x_2,x_5,x_4) =  \begin{array}{ll}
3\,\big ( R(x_6,x_2,x_5,x_4) & + R(x_6,x_4,x_5,x_2) \big )\;.
\end{array}
\end{equation}
\end{lemma}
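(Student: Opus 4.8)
The plan is to compute the left-hand side directly from the definition of the Young symmetrizer attached to the diagram~\eqref{eq:Young_symmetrizer} (restricted to the rows of length two, i.e. the $(2,2)$-shape) and then take the trace $\trace_{1,3}$, using the Ricci-flatness of $R$ together with the first Bianchi identity and the pair symmetry of $R$ to collapse the resulting sum. This is structurally the same computation as the one carried out in Lemma~\ref{le:Young1} and Corollary~\ref{co:Young1}: there the tensor being symmetrized was $R_{x_5,x_3}\cdot R(\cdots)$ (a quadratic expression in $R$), whereas here it is the much simpler $\g{x_1}{x_6}\,R(x_3,x_2,x_5,x_4)$, linear in $R$ and linear in the metric. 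So I expect the bookkeeping to be lighter.

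Concretely, I would first write out the $(2,2)$-Young symmetrizer applied to a tensor $T(x_1,x_2,x_3,x_4)$ as the alternating sum over the column transpositions composed with the row symmetrizations, exactly as displayed in the expansions inside the proofs of Lemma~\ref{le:Young1} and Corollary~\ref{co:Young1}. Applying this to $T(x_1,x_2,x_3,x_4) = \g{x_1}{x_6}\,R(x_3,x_2,x_5,x_4)$ produces sixteen terms; in each of them one of the four slots among $x_1,x_2,x_3,x_4$ is paired with $x_6$ through the metric, and the remaining three (plus the fixed $x_5$) fill the four curvature slots. Then I apply $\trace_{1,3}$, i.e. sum over $x_1=x_3=e_i$. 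The terms in which $e_i$ is contracted against $x_6$ through the metric produce $R(x_6,\cdots)$ directly; the terms in which $e_i$ appears twice inside the curvature tensor produce either $\ric$ (hence vanish, by $\ric=0$) or a trace over two curvature slots that is again $\pm\ric$ and vanishes; and the terms in which $e_i$ appears once in the metric factor $\g{e_i}{x_6}$ and once in a curvature slot reduce, after summing over $i$, to a single curvature evaluation with $x_6$ inserted in that slot.

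After this contraction, Ricci-flatness kills all the Ricci terms, and the first Bianchi identity together with pair symmetry of $R$ is used to identify the surviving curvature evaluations and combine like terms, yielding $3\big(R(x_6,x_2,x_5,x_4) + R(x_6,x_4,x_5,x_2)\big)$; the symmetrization in $\{x_2,x_4\}$ on the right is forced by the fact that the $(2,2)$-symmetrizer already symmetrizes the second row, so the left-hand side is automatically symmetric under $x_2\leftrightarrow x_4$. The main obstacle is purely organizational: keeping track of signs across the sixteen terms and correctly classifying each into one of the three types above. There is no conceptual difficulty — the only inputs are the explicit form of the $(2,2)$-symmetrizer, $\ric=0$, the first Bianchi identity, and pair symmetry — but a sign error in the alternating sum would propagate, so I would organize the sixteen terms into a $4\times 4$ array as in the proof of Lemma~\ref{le:Young1} and verify the type-by-type cancellation column by column.
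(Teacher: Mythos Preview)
Your plan is correct and matches the paper's own proof essentially line for line: the paper also expands the $(2,2)$-Young symmetrizer into sixteen terms, applies $\trace_{1,3}$ term by term, and then simplifies using $\ric=0$ together with the antisymmetry and pair symmetry of $R$ to obtain the factor $3$. The only minor difference is cosmetic: the paper first records the intermediate result with the Ricci terms still present and then sets $\ric=0$, whereas you propose to invoke Ricci-flatness as you go; also, the final collapse uses only antisymmetry in the first pair (so the first Bianchi identity is not actually needed here).
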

\begin{proof}
\begin{align*}
\begin{array}{|c|c|c|}
\cline{1-2}
1 & 3 \\
\cline{1-2}
2 & 4 \\
\cline{1-2}
\end{array}\; \g{x_1}{x_6}\, R(x_3,x_2,x_5,x_4) = 
\left \lbrace \begin{array}{ll}
+ \g{x_1}{x_6}\, R(x_3,x_2,x_5,x_4) & + \g{x_3}{x_6}\, R(x_1,x_2,x_5,x_4)\\
+ \g{x_1}{x_6}\, R(x_3,x_4,x_5,x_2) & + \g{x_3}{x_6}\, R(x_1,x_4,x_5,x_2)\\
- \g{x_2}{x_6}\, R(x_3,x_1,x_5,x_4) & - \g{x_3}{x_6}\, R(x_2,x_1,x_5,x_4)\\
- \g{x_2}{x_6}\, R(x_3,x_4,x_5,x_1) & - \g{x_3}{x_6}\, R(x_2,x_4,x_5,x_1)\\
- \g{x_1}{x_6}\, R(x_4,x_2,x_5,x_3) & - \g{x_4}{x_6}\, R(x_1,x_2,x_5,x_3)\\
- \g{x_1}{x_6}\, R(x_4,x_3,x_5,x_2) & - \g{x_4}{x_6}\, R(x_1,x_3,x_5,x_2)\\
+ \g{x_2}{x_6}\, R(x_4,x_1,x_5,x_3) & + \g{x_4}{x_6}\, R(x_2,x_1,x_5,x_3)\\
+ \g{x_2}{x_6}\, R(x_4,x_3,x_5,x_1) & + \g{x_4}{x_6}\, R(x_2,x_3,x_5,x_1)\\
\end{array} \right . \;.
\end{align*}
Hence,
\begin{align*}
&\trace_{1,3}\;\begin{array}{|c|c|c|}
\cline{1-2}
1 & 3 \\
\cline{1-2}
2 & 4 \\
\cline{1-2}
\end{array}\; \g{x_1}{x_6}\, R(x_3,x_2,x_4,x_5) = 
\left \lbrace \begin{array}{ll}
+ \g{e_i}{x_6}\, R(e_i,x_2,x_5,x_4) & + \g{e_i}{x_6}\, R(e_i,x_2,x_5,x_4)\\
+ \g{e_i}{x_6}\, R(e_i,x_4,x_5,x_2) & + \g{e_i}{x_6}\, R(e_i,x_4,x_5,x_2)\\
- \g{x_2}{x_6}\, R(e_i,e_i,x_5,x_4) & - \g{e_i}{x_6}\, R(x_2,e_i,x_5,x_4)\\
- \g{x_2}{x_6}\, R(e_i,x_4,x_5,e_i) & - \g{e_i}{x_6}\, R(x_2,x_4,x_5,e_i)\\
- \g{e_i}{x_6}\, R(x_4,x_2,x_5,e_i) & - \g{x_4}{x_6}\, R(e_i,x_2,x_5,e_i)\\
- \g{e_i}{x_6}\, R(x_4,e_i,x_5,x_2) & - \g{x_4}{x_6}\, R(e_i,e_i,x_5,x_2)\\
+ \g{x_2}{x_6}\, R(x_4,e_i,x_5,e_i) & + \g{x_4}{x_6}\, R(x_2,e_i,x_5,e_i)\\
+ \g{x_2}{x_6}\, R(x_4,e_i,x_5,e_i) & + \g{x_4}{x_6}\, R(x_2,e_i,x_5,e_i)\\
\end{array} \right .\\
&  = \left \lbrace \begin{array}{ll}
+ R(x_6,x_2,x_5,x_4) & + R(x_6,x_2,x_5,x_4)\\
+ R(x_6,x_4,x_5,x_2) & + R(x_6,x_4,x_5,x_2)\\
 & - R(x_2,x_6,x_5,x_4)\\
- \g{x_2}{x_6}\, \ric(x_4,x_5) & - R(x_2,x_4,x_5,x_6)\\
- R(x_4,x_2,x_5,x_6) & - \g{x_4}{x_6}\, \ric(x_2,x_5)\\
- R(x_4,x_6,x_5,x_2) & \\
- \g{x_2}{x_6}\, \ric(x_4,x_5) & - \g{x_4}{x_6}\, \ric(x_2,x_5)\\
- \g{x_2}{x_6}\, \ric(x_4,x_5) & - \g{x_4}{x_6}\, \ric(x_2,x_5)
\end{array} \right \rbrace = 3\, \left (\begin{array}{ll}
+ R(x_6,x_2,x_5,x_4) & + R(x_6,x_4,x_5,x_2)\\
- \g{x_2}{x_6}\, \ric(x_4,x_5) & - \g{x_4}{x_6}\, \ric(x_2,x_5)
\end{array} \right )\;.
\end{align*}
\end{proof}

\bigskip
\begin{corollary}
Let an algebraic curvature tensor $R$ with vanishing Ricci tensor be given. Then 
\begin{equation}\label{eq:Young4}
\trace_{1,3}\;\begin{array}{|c|c|c|}
\cline{1-3}
1 & 3 & 5\\
\cline{1-3}
2 & 4 & \multicolumn{1}{c}{\;\;\;} \\
\cline{1-2}
\end{array}\; \g{x_5}{x_6}\, R(x_1,x_2,x_3,x_4) = 6\,\left(\begin{array}{l}
R(x_6,x_2,x_5,x_4) + R(x_6,x_4,x_5,x_2)
\end{array}
\right )\;.
\end{equation}
\end{corollary}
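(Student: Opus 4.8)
The plan is to reduce~\eqref{eq:Young4} to~\eqref{eq:Young3} by decomposing the Young symmetrizer of shape $(3,2)$ on the left into three copies of the shape-$(2,2)$ (i.e.\ curvature-tensor) symmetrizer, just as in the proof of Corollary~\ref{co:Young1}. Concretely, the row-symmetrization over the top row $\{1,3,5\}$ factors as the row-symmetrization over $\{1,3\}$ followed by the sum over the three right cosets of $\scrS_{\{1,3\}}$ in $\scrS_{\{1,3,5\}}$, with representatives $e$, $(1\,5)$ and $(3\,5)$, while the singleton column $\{5\}$ contributes nothing to the column-antisymmetrization. Writing $c$ for the shape-$(2,2)$ symmetrizer acting on the arguments labelled $1,2,3,4$, this gives
\begin{align*}
&\begin{array}{|c|c|c|}
\cline{1-3}
1 & 3 & 5\\
\cline{1-3}
2 & 4 & \multicolumn{1}{c}{\;\;\;} \\
\cline{1-2}
\end{array}\; \g{x_5}{x_6}\, R(x_1,x_2,x_3,x_4)\\
&\qquad = c\,\big(\g{x_5}{x_6}R(x_1,x_2,x_3,x_4)\big) + c\,\big(\g{x_1}{x_6}R(x_5,x_2,x_3,x_4)\big) + c\,\big(\g{x_3}{x_6}R(x_1,x_2,x_5,x_4)\big)\;.
\end{align*}

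Next I would dispose of the three summands in turn. For the first one, $\g{x_5}{x_6}$ is a scalar for $c$, so by~\eqref{eq:eigenvalue} with $k=0$ it equals $12\,\g{x_5}{x_6}\, R(x_1,x_2,x_3,x_4)$; applying $\trace_{1,3}$ then produces a multiple of $\g{x_5}{x_6}\,\ric(x_2,x_4)$, which vanishes by the standing hypothesis $\ric=0$. For the second and third summands I would exploit the pair symmetry of $R$ together with the invariance of $c$ under pre-composition with the row transpositions $(1\,3)$ and $(2\,4)$: using $R(x_5,x_2,x_3,x_4)=R(x_3,x_4,x_5,x_2)$ and then the $(2\,4)$-invariance, the second summand becomes $c\,\big(\g{x_1}{x_6}R(x_3,x_2,x_5,x_4)\big)$, and the $(1\,3)$-invariance turns the third summand into the same expression. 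Thus, after applying $\trace_{1,3}$, each of the last two summands is exactly the left hand side of~\eqref{eq:Young3}, hence equals $3\,\big(R(x_6,x_2,x_5,x_4)+R(x_6,x_4,x_5,x_2)\big)$; summing $0+3(\cdots)+3(\cdots)$ yields~\eqref{eq:Young4}.

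The only delicate point is the combinatorial bookkeeping of the coset decomposition: one has to pin down the three representative tensors correctly and, above all, verify that after applying the symmetries of $R$ and the built-in row invariances of $c$ the last two summands genuinely coincide with the left hand side of~\eqref{eq:Young3} and not with some permuted variant of it. This is routine but requires the same care as the proof of Corollary~\ref{co:Young1}; the vanishing of the first summand and the final additive combination are then immediate.
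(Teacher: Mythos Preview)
Your proposal is correct and follows essentially the same route as the paper: decompose the $(3,2)$ Young symmetrizer as $c_{(2,2)}$ composed with a sum over three coset representatives of $\scrS_{\{1,3\}}$ in $\scrS_{\{1,3,5\}}$, kill the first summand via $\ric=0$, and match the remaining two to~\eqref{eq:Young3}. The only cosmetic difference is that the paper chooses the $3$-cycles $(1\,3\,5)$ and $(1\,5\,3)$ as representatives---so its second summand already reads $c\big(\g{x_1}{x_6}R(x_3,x_2,x_5,x_4)\big)$ and matches~\eqref{eq:Young3} without the extra pair-symmetry/row-invariance rewrite you perform---while you use the transpositions $(1\,5)$, $(3\,5)$ and then reduce; since $c_{(2,2)}\cdot(1\,3)=c_{(2,2)}=c_{(2,2)}\cdot(2\,4)$, the two choices are equivalent and your bookkeeping is sound.
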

\begin{proof}
\begin{align*}
& \begin{array}{|c|c|c|}
\cline{1-3}
1 & 3 & 5\\
\cline{1-3}
2 & 4 & \multicolumn{1}{c}{\;\;\;} \\
\cline{1-2}
\end{array}\;\g{x_5}{x_6}R(x_1,x_2,x_3,x_4)
 = &  \begin{array}{|c|c|}
\cline{1-2}
1 & 3 \\
\cline{1-2}
2 & 4 \\
\cline{1-2}
\end{array}\; \g{x_5}{x_6} R(x_1,x_2,x_3,x_4)\\
&  +\;\begin{array}{|c|c|}
\cline{1-2}
1 & 3 \\
\cline{1-2}
2 & 4 \\
\cline{1-2}
\end{array}\; \g{x_1}{x_6} R(x_3,x_2,x_5,x_4)
&+\;\begin{array}{|c|c|}
\cline{1-2}
1 & 3 \\
\cline{1-2}
2 & 4 \\
\cline{1-2}
\end{array}\; \g{x_3}{x_6}R(x_5,x_2,x_1,x_4)\;.
\end{align*}
Hence, 
\begin{align*}
& \trace_{1,3}\;\begin{array}{|c|c|c|}
\cline{1-3}
1 & 3 & 3\\
\cline{1-3}
2 & 4 & \multicolumn{1}{c}{\;\;\;} \\
\cline{1-2}
\end{array}\;\g{x_5}{x_6}R(x_1,x_2,x_3,x_4)
 = & \underbrace{\trace_{1,3}\;\begin{array}{|c|c|}
\cline{1-2}
1 & 3 \\
\cline{1-2}
2 & 4 \\
\cline{1-2}
\end{array}\; \g{x_5}{x_6} R(x_1,x_2,x_3,x_4)}_{\stackrel{\eqref{eq:eigenvalue}} = 0} \\
& +  \underbrace{\trace_{1,3}\;\begin{array}{|c|c|}
\cline{1-2}
1 & 3 \\
\cline{1-2}
2 & 4 \\
\cline{1-2}
\end{array}\; \g{x_1}{x_6} R(x_3,x_2,x_5,x_4)}_{\stackrel{\eqref{eq:Young3}}{=} \begin{array}{ll}
+ 3\, R(x_6,x_2,x_5,x_4) & + 3\, R(x_6,x_4,x_5,x_2)
\end{array}
}
& +\underbrace{\trace_{1,3}\;\begin{array}{|c|c|}
\cline{1-2}
1 & 3 \\
\cline{1-2}
2 & 4 \\
\cline{1-2}
\end{array}\; \overbrace{\g{x_3}{x_6}R(x_5,x_2,x_1,x_4)}^{= \g{x_3}{x_6}R(x_1,x_4,x_2,x_5)}}_{\stackrel{\eqref{eq:Young3}}= 
\begin{array}{ll}
+ 3\, R(x_6,x_2,x_5,x_4) & + 3\, R(x_6,x_4,x_5,x_2)
\end{array}
}\;.
\end{align*}
The result follows.
\end{proof}

\bigskip
\begin{lemma}
Let an algebraic curvature tensor $R$ with $\ric = 0$ be given. Then 
\begin{equation}\label{eq:Young5}
\trace_{1,3}\; \begin{array}{|c|c|}
\cline{1-2}
1 & 3 \\
\cline{1-2}
2 & 4 \\
\cline{1-2}
\end{array}\; \g{x_1}{x_3}\, R(x_5,x_2,x_6,x_4) = 
(2\,n - 4)\,\left (\begin{array}{l} 
 R(x_5,x_2,x_6,x_4) + R(x_5,x_4,x_6,x_2)
\end{array}\right )\;.
\end{equation}
\end{lemma}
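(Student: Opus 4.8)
The proof runs parallel to those of~\eqref{eq:Young3} and~\eqref{eq:Young4}: expand the Young symmetrizer, apply the trace $\trace_{1,3}$, and simplify using the algebraic symmetries of $R$ together with the hypothesis $\ric=0$.

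First I would expand the action of the $(2,2)$-Young symmetrizer on the $4$-tensor $(y_1,y_2,y_3,y_4)\mapsto\g{y_1}{y_3}\,R(x_5,y_2,x_6,y_4)$ as a sum of signed permuted copies, exactly as in the proof of~\eqref{eq:Young3}. Since the symmetrizer only permutes the four arguments, the parameters $x_5,x_6$ stay in the first and third slot of $R$ throughout, so every resulting term has the shape $\pm\,\g{y_a}{y_b}\,R(x_5,y_c,x_6,y_d)$ with $\{a,b,c,d\}=\{1,2,3,4\}$.

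Next I would take the trace $\trace_{1,3}$, i.e. substitute $y_1=y_3=e_i$ and sum over $i$. A given term then falls into one of three types. If the metric factor joins the two traced arguments it becomes $\g{e_i}{e_i}$, and $\sum_i\g{e_i}{e_i}=n$, leaving $\pm\,n\,R(x_5,x_2,x_6,x_4)$ or $\pm\,n\,R(x_5,x_4,x_6,x_2)$; this is the only source of the dimension $n$. If the metric factor is of the form $\g{e_i}{x_j}$ with the companion traced argument lying inside $R$, then $\sum_i\g{e_i}{x_j}\,R(\cdots,e_i,\cdots)=R(\cdots,x_j,\cdots)$, leaving a plain curvature term $\pm\,R(x_5,\cdot,x_6,\cdot)$ with the two dots equal to $x_2,x_4$ in some order. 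Finally, if both traced arguments land inside $R$ -- which forces the metric factor to be $\g{x_2}{x_4}$ -- the term is $\pm\,\g{x_2}{x_4}\sum_i R(x_5,e_i,x_6,e_i)=\mp\,\g{x_2}{x_4}\,\ric(x_5,x_6)$, which vanishes because $R$ is Ricci-flat.

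It then remains to collect the surviving contributions. Using the skew symmetries $R(a,b,c,d)=-R(b,a,c,d)=-R(a,b,d,c)$, the pair symmetry $R(a,b,c,d)=R(c,d,a,b)$, the first Bianchi identity, and once more $\ric=0$, every term of the second type reduces to a multiple of $R(x_5,x_2,x_6,x_4)$ or of $R(x_5,x_4,x_6,x_2)$; adding the coefficients produces $2n$ from the terms of the first type and $-4$ from those of the second type, which is~\eqref{eq:Young5}. As in~\eqref{eq:Young3} and~\eqref{eq:Young4} the computation is routine sign-and-symmetry bookkeeping; the one place that genuinely requires care -- and the feature absent from the earlier two results, where the relevant metric factor always carried an external index -- is keeping precise track of which permuted terms still have the metric factor joining the two traced arguments, since these are exactly the ones responsible for the coefficient $2n$.
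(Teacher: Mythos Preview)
Your proposal is correct and follows essentially the same approach as the paper: expand the $(2,2)$ Young symmetrizer into its sixteen signed terms, set $x_1=x_3=e_i$ and sum, then sort the resulting terms into those carrying $\g{e_i}{e_i}=n$, those where the metric contracts one $e_i$ and the other sits inside $R$, and those producing a Ricci contraction which vanishes by hypothesis. One minor remark: since $x_5,x_6$ remain fixed in the first and third slots of $R$ throughout, the terms of your second type are already of the form $\pm R(x_5,x_2,x_6,x_4)$ or $\pm R(x_5,x_4,x_6,x_2)$ and need no further simplification, so the first Bianchi identity and pair symmetry you list are not actually required at that step.
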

\begin{proof}
\begin{align*}
& \begin{array}{|c|c|}
\cline{1-2}
1 & 3 \\
\cline{1-2}
2 & 4 \\
\cline{1-2}
\end{array}\; \g{x_1}{x_3}\, R(x_5,x_2,x_6,x_4) = \left \lbrace 
\begin{array}{ll} 
+ \g{x_1}{x_3}\, R(x_5,x_2,x_6,x_4) & + \g{x_3}{x_1}\, R(x_5,x_2,x_6,x_4)\\
+ \g{x_1}{x_3}\, R(x_5,x_4,x_6,x_2) & + \g{x_3}{x_1}\, R(x_5,x_4,x_6,x_2)\\
- \g{x_2}{x_3}\, R(x_5,x_1,x_6,x_4) & - \g{x_3}{x_2}\, R(x_5,x_1,x_6,x_4)\\
- \g{x_2}{x_3}\, R(x_5,x_4,x_6,x_1) & - \g{x_3}{x_2}\, R(x_5,x_4,x_6,x_1)\\
- \g{x_1}{x_4}\, R(x_5,x_2,x_6,x_3) & - \g{x_4}{x_1}\, R(x_5,x_2,x_6,x_3)\\
- \g{x_1}{x_4}\, R(x_5,x_3,x_6,x_2) & - \g{x_4}{x_1}\, R(x_5,x_3,x_6,x_2)\\
+ \g{x_2}{x_4}\, R(x_5,x_1,x_6,x_3) & + \g{x_4}{x_2}\, R(x_5,x_1,x_6,x_3)\\
+ \g{x_2}{x_4}\, R(x_5,x_3,x_6,x_1) & + \g{x_4}{x_2}\, R(x_5,x_3,x_6,x_1)
\end{array}
\right .\;.
\end{align*}
Hence,
\begin{align*}
& \trace_{1,3}\;\begin{array}{|c|c|}
\cline{1-2}
1 & 3 \\
\cline{1-2}
2 & 4 \\
\cline{1-2}
\end{array}\; \g{x_1}{x_3}\, R(x_5,x_2,x_4,x_6) & = &\left \lbrace 
\begin{array}{ll} 
+ \g{e_i}{e_i}\, R(x_5,x_2,x_6,x_4) & + \g{e_i}{e_i}\, R(x_5,x_2,x_6,x_4)\\
+ \g{e_i}{e_i}\, R(x_5,x_4,x_6,x_2) & + \g{e_i}{e_i}\, R(x_5,x_4,x_6,x_2)\\
- \g{x_2}{e_i}\, R(x_5,e_i,x_6,x_4) & - \g{e_i}{x_2}\, R(x_5,e_i,x_6,x_4)\\
- \g{x_2}{e_i}\, R(x_5,x_4,x_6,e_i) & - \g{e_i}{x_2}\, R(x_5,x_4,x_6,e_i)\\
- \g{e_i}{x_4}\, R(x_5,x_2,x_6,e_i) & - \g{x_4}{e_i}\, R(x_5,x_2,x_6,e_i)\\
- \g{e_i}{x_4}\, R(x_5,e_i,x_6,x_2) & - \g{x_4}{e_i}\, R(x_5,e_i,x_6,x_2)\\
+ \g{x_2}{x_4}\, R(x_5,e_i,x_6,e_i) & + \g{x_4}{x_2}\, R(x_5,e_i,x_6,e_i)\\
+ \g{x_2}{x_4}\, R(x_5,e_i,x_6,e_i) & + \g{x_4}{x_2}\, R(x_5,e_i,x_6,e_i)
\end{array}
\right .\\
& = 2\, \left ( 
\begin{array}{ll} 
+ n R(x_5,x_2,x_6,x_4) & + n R(x_5,x_4,x_6,x_2) \\
- R(x_5,x_2,x_6,x_4) & - R(x_5,x_4,x_6,x_4)\\
- R(x_5,x_2,x_6,x_4) & - R(x_5,x_4,x_6,x_2)\\
- \g{x_2}{x_4}\, \ric (x_5,x_6) & - \g{x_2}{x_4}\, \ric (x_5,x_6)
\end{array}
\right ) & = & \left \lbrace \begin{array}{l} 
+ (2\,n - 4)R(x_5,x_2,x_4,x_6)\\ + (2\,n - 4) R(x_5,x_4,x_2,x_6)\\ + 4
\g{x_2}{x_4}\, \ric (x_5,x_6)
\end{array} \right .\;.
\end{align*}
\end{proof}

\bigskip
\begin{corollary}\label{co:Ricci_tensor}
Let an algebraic curvature tensor $R$ with vanishing Ricci tensor be given. Then
the second covariant derivative of
the Ricci-tensor and the rough Laplacian accociated with~\eqref{eq:associated_linear_two-jet_2} are given by
\begin{enumerate}
\item \begin{equation}\label{eq:Ricci_tensor_of_iotaR}
\hat \nabla^{2)}_{x_5,x_6}\hat\ric(x_2,x_4) = - 4(n + 4)\big (R(x_5,x_2,x_6,x_4) + R(x_5,x_4,x_6,x_2)\big )\;,
\end{equation}
\item 
 \begin{equation}\label{eq:Rough_Laplacian_of_iotaR}
\hat\nabla^*\hat\nabla \hat R(x_1,x_2,x_3,x_4) = - 24(n + 4)\, R(x_1,x_2,x_3,x_4)\;.
\end{equation}
\end{enumerate}
\end{corollary}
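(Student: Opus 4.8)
The two assertions are treated separately; (2) will be deduced from (1).

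For (1), the plan is to imitate the proof of Proposition~\ref{p:Young1}. Starting from the definition~\eqref{eq:Ricci_tensor_of_the_associated_linear_two-jet_2}, I would expand the Young symmetrizer of shape $(4,2)$ acting on $\g{x_5}{x_6}R(x_1,x_2,x_3,x_4)$ by means of the coset decomposition~\eqref{eq:symmetrizer} of the row symmetrization over $\{1,3,5,6\}$ modulo $\scrS_{\{5,6\}}$. Since $\g{x_5}{x_6}$ is symmetric in $x_5,x_6$, the $\scrS_{\{5,6\}}$-part acts as multiplication by $2$, and the five remaining summands of~\eqref{eq:symmetrizer}, together with the column antisymmetrization and the $\{2,4\}$-symmetrization, yield: two copies of the Young symmetrizer of shape $(3,2)$ applied to $\g{x_5}{x_6}R(x_1,x_2,x_3,x_4)$ (the copy arising from $\scrS_{\{1,3,6\}}$ has the roles of $5$ and $6$ interchanged, which is immaterial because $\g{x_5}{x_6}$ is symmetric); minus the Young symmetrizer of shape $(2,2)$ applied to $\g{x_5}{x_6}R(x_1,x_2,x_3,x_4)$; and, from the two cosets $(1\,5)(3\,6)$ and $(1\,6)(3\,5)$, the column antisymmetrization and $\{2,4\}$-symmetrization of $\g{x_1}{x_3}R(x_5,x_2,x_6,x_4)$ and of $\g{x_1}{x_3}R(x_6,x_2,x_5,x_4)$.

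Taking $\trace_{1,3}$ of each summand, I would evaluate the two $(3,2)$-terms by~\eqref{eq:Young4}; each equals $6\big(R(x_5,x_2,x_6,x_4)+R(x_5,x_4,x_6,x_2)\big)$ once the two $5\leftrightarrow 6$ orderings are identified via the pair symmetry of $R$. The $(2,2)$-term, by~\eqref{eq:eigenvalue}, equals $12\,\g{x_5}{x_6}R(x_1,x_2,x_3,x_4)$, whose trace over $1,3$ is a multiple of $\g{x_5}{x_6}\ric(x_2,x_4)=0$. For the two transposition terms I would use that $\trace_{1,3}$ renders the $\{1,3\}$-symmetrization trivial, so that each equals one half of the value furnished by~\eqref{eq:Young5}, namely $(n-2)\big(R(x_5,x_2,x_6,x_4)+R(x_5,x_4,x_6,x_2)\big)$. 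Collecting the surviving coefficients $6+6+(n-2)+(n-2)=2n+8$, multiplying by the overall factor $2$ and inserting the sign from~\eqref{eq:Ricci_tensor_of_the_associated_linear_two-jet_2}, one obtains $-4(n+4)\big(R(x_5,x_2,x_6,x_4)+R(x_5,x_4,x_6,x_2)\big)$, which is~\eqref{eq:Ricci_tensor_of_iotaR}.

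For (2), since $\hat\nabla^{2)}\hat R\in\scrC_2$, Lemma~\ref{le:hirachy} — specifically~\eqref{eq:hirachy_2} — applies to it, giving
\[
\hat\nabla^*\hat\nabla\hat R(x_1,x_2,x_3,x_4)=\frac{1}{4}\;\begin{array}{|c|c|}
\cline{1-2}
1 & 3 \\
\cline{1-2}
2 & 4 \\
\cline{1-2}
\end{array}\;\hat\nabla^{2)}_{x_1,x_3}\hat\ric(x_2,x_4)\,.
\]
Inserting~\eqref{eq:Ricci_tensor_of_iotaR} (with $x_5,x_6$ replaced by $x_1,x_3$) turns the right-hand side into $-(n+4)$ times the Young symmetrizer of shape $(2,2)$ applied to $R(x_1,x_2,x_3,x_4)+R(x_1,x_4,x_3,x_2)$. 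As $2$ and $4$ lie in the same row of the tableau, this symmetrizer is invariant under precomposition with the transposition $(2\,4)$, hence maps $R(x_1,x_4,x_3,x_2)$ to the same element as $R(x_1,x_2,x_3,x_4)$, and by~\eqref{eq:eigenvalue} that element is $12\,R(x_1,x_2,x_3,x_4)$. Thus the symmetrizer produces $24\,R(x_1,x_2,x_3,x_4)$ and $\hat\nabla^*\hat\nabla\hat R=-24(n+4)R$, which is~\eqref{eq:Rough_Laplacian_of_iotaR}.

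The only genuine difficulty is the combinatorial bookkeeping in (1) — tracking the normalization factors of the various (anti)symmetrizations, performing the $5\leftrightarrow 6$ relabelling, and applying the correct pair symmetry to identify $R(x_5,x_2,x_6,x_4)$ with $R(x_6,x_4,x_5,x_2)$. This is precisely the computation already carried out in the proof of Proposition~\ref{p:Young1}, but lighter, since $R$ itself (rather than $\nabla^{2)}R$) occupies the first row: no Ricci identity is needed, and most terms are killed by the hypothesis $\ric=0$.
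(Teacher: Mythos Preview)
Your proposal is correct and follows essentially the same route as the paper: the coset decomposition~\eqref{eq:symmetrizer} is used to reduce $\trace_{1,3}$ of the $(4,2)$-Young symmetrizer to the pieces handled by~\eqref{eq:Young4},~\eqref{eq:eigenvalue} and~\eqref{eq:Young5}, and part~(b) is then read off from Lemma~\ref{le:hirachy}. The only cosmetic difference is that the paper merges the two transposition cosets into a single application of the full $(2,2)$-symmetrizer to $\g{x_1}{x_3}R(x_5,x_2,x_6,x_4)$ (which, as you note, amounts to the same thing after the $(2\,4)$-symmetrization and the pair symmetry of $R$), and invokes~\eqref{eq:hirachy_3} rather than~\eqref{eq:hirachy_2} for~(b).
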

\begin{proof}
Using the previous together with~\eqref{eq:symmetrizer},
\begin{align*}
& \frac{1}{2}\,\trace_{1,3}\;\begin{array}{|c|c|c|c|}
\cline{1-4}
1 & 3 & 5 & 6\\
\cline{1-4}
2 & 4 & \multicolumn{2}{c}{\;\;\;}\\
\cline{1-2}
\end{array}\; \g{x_5}{x_6}\, R(x_1,x_2,x_3,x_4) = & \underbrace{\trace_{1,3}\;\begin{array}{|c|c|c|}
\cline{1-3}
1 & 3 & 5\\
\cline{1-3}
2 & 4 & \multicolumn{1}{c}{\;\;\;}\\
\cline{1-2}
\end{array}\; \g{x_5}{x_6}\, R(x_1,x_2,x_3,x_4)}_{\stackrel{\eqref{eq:Young4}}= 
6\, R(x_6,x_2,x_5,x_4) + 6\, R(x_6,x_4,x_5,x_2)}\\
& +\underbrace{ \trace_{1,3}\;\begin{array}{|c|c|c|}
\cline{1-3}
1 & 3 & 6\\
\cline{1-3}
2 & 4 & \multicolumn{1}{c}{\;\;\;} \\
\cline{1-2}
\end{array}\; \g{x_5}{x_6}\, R(x_1,x_2,x_3,x_4)}_{\stackrel{\eqref{eq:Young4}}= 
6\, R(x_5,x_2,x_6,x_4) + 6\, R(x_5,x_4,x_6,x_2)}
& - \underbrace{\trace_{1,3}\;\begin{array}{|c|c|}
\cline{1-2}
1 & 3 \\
\cline{1-2}
2 & 4 \\
\cline{1-2}
\end{array}\; \g{x_5}{x_6}\, R(x_1,x_2,x_3,x_4)}_{= 0}\\
&  + \underbrace{\trace_{1,3}\;\begin{array}{|c|c|}
\cline{1-2}
1 & 3 \\
\cline{1-2}
2 & 4 \\
\cline{1-2}
\end{array}\; \g{x_1}{x_3}\, R(x_5,x_2,x_6,x_4)}_{\stackrel{\eqref{eq:Young5}}= 
(2\,n - 4)\, R(x_5,x_2,x_6,x_4) + (2\,n - 4)\, R(x_5,x_4,x_6,x_2)}\;.
\end{align*}
Eq.~\ref{eq:Ricci_tensor_of_iotaR} follows. Then~\eqref{eq:Rough_Laplacian_of_iotaR} follows from~\eqref{eq:hirachy_3}.
\end{proof}

\section{Proof of Theorem~\ref{th:main}}
On the analogy of Lemma~\ref{le:linear_jacobi_relation_alternativ}, it suffices to show the following:

\bigskip
\begin{singletheorem}[\bf Theorem 1']\label{th:main_alternativ}
The two-jet $(R,\nabla R,\nabla^{2)}R)$ of some curvature tensor is Einstein
if and only if $(R,\nabla R)$ is Einstein and 
\begin{equation}\label{eq:traceless_part_1}
\begin{array}{|c|c|c|c|}
\cline{1-4}
1 & 3 & 5 & 6 \\
\cline{1-4}
2 & 4 \\
\cline{1-2}
\end{array}\;\big ( \nabla^{2)}_{x_5,x_6}R(x_1,x_2,x_3,x_4) -
\frac{1}{n+4}\g{x_5}{x_6} (R*R)(x_1,x_2,x_3,x_4)\big )
\end{equation}
is totally trace-free.
\end{singletheorem}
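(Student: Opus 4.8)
The plan is to realise the tensor in~\eqref{eq:traceless_part_1} as a single element $\Phi$ of $\scrC_2$, to compute its essential trace, and to recognise that this trace is a nonzero multiple of $\nabla^{2)}\ric$. Then Lemma~\ref{le:hirachy} converts the condition ``$\Phi$ is totally trace-free'' into the condition ``$\nabla^{2)}\ric = 0$'', which is exactly the extra requirement that promotes an Einstein pair $(R,\nabla R)$ to an Einstein two-jet. As a preliminary tautology, $(R,\nabla R,\nabla^{2)}R)$ is Einstein if and only if $(R,\nabla R)$ is Einstein and $\nabla^{2)}\ric = 0$, so it suffices, under the standing hypothesis that $(R,\nabla R)$ is Einstein (hence $\ric\in\R\gg$), to prove that~\eqref{eq:traceless_part_1} is totally trace-free precisely when $\nabla^{2)}\ric = 0$.

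Since the Young symmetrizer of~\eqref{eq:Young_symmetrizer} with $k=2$ is linear, the tensor~\eqref{eq:traceless_part_1} equals $\Phi := \tilde\nabla^{2)}\tilde R - \frac{1}{n+4}\,\iota(R*R)$, where $\tilde\nabla^{2)}\tilde R$ is the associated linear two-jet from~\eqref{eq:associated_linear_two-jet_1} and $\iota$ is the embedding from~\eqref{eq:iota}; applying $\iota$ to $R*R$ is legitimate because $R*R$ is Ricci-flat by Corollary~\ref{co:Jacobioperator_of_R*R}. Both summands lie in $\scrC_2$, hence so does $\Phi$. The core of the proof is the computation of the Ricci tensor of $\Phi$. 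For $\tilde\nabla^{2)}\tilde R$ I would invoke Proposition~\ref{p:Young1}: the term $R_{\cdot,\cdot}\cdot\ric$ there drops out because $\ric$ is a multiple of $\gg$ and curvature endomorphisms are skew-adjoint, and after using the pair symmetry of $R*R$ together with $\nabla^{2)}\ric\in\Sym^2 V^*\otimes\Sym^2 V^*$ to collapse the sum over $\scrS_{\{2,4\}}\times\scrS_{\{5,6\}}$, one is left with $\tilde\nabla^{2)}_{x_5,x_6}\tilde\ric(x_2,x_4) = 80\,\nabla^{2)}_{x_5,x_6}\ric(x_2,x_4) - 4\big(R*R(x_5,x_2,x_6,x_4) + R*R(x_5,x_4,x_6,x_2)\big)$. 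For $\iota(R*R)$ I would apply Corollary~\ref{co:Ricci_tensor}(a) to the Ricci-flat tensor $R*R$, obtaining its Ricci tensor as $-4(n+4)\big(R*R(x_5,x_2,x_6,x_4) + R*R(x_5,x_4,x_6,x_2)\big)$. The coefficient $\frac{1}{n+4}$ is chosen precisely so that the two $R*R$-contributions cancel upon subtraction, leaving $\Phi$ with Ricci tensor $80\,\nabla^{2)}\ric$.

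To finish, since $\Phi\in\scrC_2$, Lemma~\ref{le:hirachy} (via~\eqref{eq:hirachy_1}--\eqref{eq:hirachy_4}) gives that $\Phi$ is totally trace-free if and only if its Ricci tensor vanishes, i.e.\ if and only if $\nabla^{2)}\ric = 0$; together with the preliminary tautology this yields both directions of the theorem at once. Almost all of the computational work has already been carried out in Proposition~\ref{p:Young1} and Corollary~\ref{co:Ricci_tensor}, so what remains is a short assembly. The point I would be most careful about is the permutation bookkeeping in $\tilde\nabla^{2)}\tilde\ric$ --- tracking the four elements of $\scrS_{\{2,4\}}\times\scrS_{\{5,6\}}$ and the pair symmetry of $R*R$ --- and checking that the scalar multiplying $R*R$ there is, up to the factor $n+4$, exactly the one produced by $\iota$, so that the cancellation is clean; getting these two constants to match is the main obstacle.
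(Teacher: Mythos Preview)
Your argument is correct. The forward direction coincides with the paper's: both deduce from Proposition~\ref{p:Young1} (in the paper via its specialisation Corollary~\ref{co:Einstein}) together with Corollary~\ref{co:Ricci_tensor}(a) that the $\trace_{1,3}$ of~\eqref{eq:traceless_part_1} vanishes, and then appeal to Lemma~\ref{le:hirachy}.

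For the converse direction, however, your route is genuinely shorter than the paper's. You observe that, under the standing assumption $\ric\in\R\gg$, Proposition~\ref{p:Young1} already yields the Ricci tensor of $\Phi:=\tilde\nabla^{2)}\tilde R-\tfrac{1}{n+4}\iota(R*R)$ as exactly $80\,\nabla^{2)}\ric$, so that Lemma~\ref{le:hirachy} converts ``$\Phi$ totally trace-free'' into ``$\nabla^{2)}\ric=0$'' in one stroke. The paper instead passes through the Littlewood--Richardson decomposition $\Sym^2 V^*\otimes\Sym^2 V^*=[4]\oplus[3,1]\oplus[2,2]$ and kills the three components of $\nabla^{2)}\ric$ separately: the first two via evaluation of $\trace_{1,3}\Phi$ at special arguments, and the $[2,2]$-piece via $\trace_{5,6}\Phi$ together with Proposition~\ref{p:rough_Laplacian_of_the_associated_two-jet} and the Weitzenb\"ock relation~\eqref{eq:Weitzenböck_spezial}. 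Your approach dispenses with Proposition~\ref{p:rough_Laplacian_of_the_associated_two-jet} entirely and handles both implications at once; the paper's approach, on the other hand, makes explicit how each irreducible piece of $\nabla^{2)}\ric$ is controlled, which ties in with the affine-bundle description in the concluding remarks. Your bookkeeping (pair symmetry of $R*R$ and $\nabla^{2)}\ric\in\Sym^2\otimes\Sym^2$ from the Ricci identity with $\ric\in\R\gg$) is precisely what is needed, and the constants do match as you anticipated.
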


\begin{proof}[Proof of Theorem 1']
In the one direction, suppose the Einstein condition holds. 
Clearly this implies that $R*\ric = 0$\,. Hence $R*R$ has vanishing Ricci tensor according to~\eqref{eq:Ricci_tensor_of_R*R}. Thus, on the one hand applying Corollary~\ref{co:Ricci_tensor} to the
curvature tensor $R*R$\,, we obtain that
\begin{align*}
&  - \frac{1}{n + 4} \trace_{1,3}\,\begin{array}{|c|c|c|c|}
\cline{1-4}
1 & 3 & 5 & 6 \\
\cline{1-4}
2 & 4 \\
\cline{1-2}
\end{array}\; \g{x_5}{x_6}\,R*R(x_1,x_2,x_3,x_4) \\ 
& \stackrel{\eqref{eq:Ricci_tensor_of_iotaR}}{=} - 4 \big (R*R(x_5,x_2,x_6,x_4) + R*R(x_5,x_4,x_6,x_2)\big )\;.
\end{align*}
On the other hand,  
\[
-\trace_{1,3}\, \begin{array}{|c|c|c|c|}
\cline{1-4}
1 & 3 & 5 & 6 \\
\cline{1-4}
2 & 4 \\
\cline{1-2}
\end{array}\; \nabla^{2)}_{x_5,x_6}R(x_1,x_2,x_3,x_4) = - 4\, \big (R*R(x_5,x_2,x_6,x_4) + R*R(x_5,x_4,x_6,x_2)\big )
\]
according to Corollary~\ref{co:Einstein}. Thus the trace
of~\eqref{eq:traceless_part_1} with respect to $\{x_1,x_3\}$ vanishes. Because of Lemma~\ref{le:hirachy} this
already implies that~\eqref{eq:traceless_part_1} is totally trace-free.
\qed

In the other direction, suppose that $(R,\nabla R)$ satisfies the Einstein condition and 
that~\eqref{eq:traceless_part_1} is totally trace-free. We aim to show that  $\nabla^{2)}\ric = 0$\,. For this, since the Ricci tensor is assumed to be a multiple of the identity,
\begin{align*}
& \nabla^{2)}_{x_1,x_2}\ric(y_1,y_2) - \nabla^{2)}_{x_2,x_1}\ric(y_1,y_2)\;,\\
& = R_{x_1,x_2}\cdot \ric(y_1,y_2) = 0\;.
\end{align*}
Thus we already know that
\begin{equation}
\label{eq:nabla_quadrat_in_Sym_Zwei_tensor_Sym_Zwei}
\nabla^{2)}\ric \in \Sym^2 V^* \otimes \Sym^2 V^*\;.
\end{equation}
Further, recall from the Littlewood-Richardson rules that there is an abstract $\sl(V;\C)$ decomposition
\begin{equation}\label{eq:Littlewood_Richardson_rule}
\Sym^2 V^* \otimes \Sym^2 V^* = \begin{array}{|c|c|c|c|}
\cline{1-4}
\quad &\quad  &\quad &\quad \\
\cline{1-4}
\end{array} \oplus \begin{array}{|c|c|c|}
\cline{1-3}
\quad &\quad  &\quad\\
\cline{1-3}
\quad\\
\cline{1-1}
\end{array} \oplus \begin{array}{|c|c|}
\cline{1-2}
\quad & \quad \\
\cline{1-2}
\quad & \quad \\
\cline{1-2}
\end{array}
\end{equation}
where each Young frame represents some irreducible component in $\bigotimes^4 V^*$\,.
We will show that each of the three components of  $\nabla^{2)}\ric$ vanishes:

First, we aim to show that 
\begin{equation}\label{eq:vanishes?_1}
\forall_{x_1,x_2,x_3,x_4\in V}\begin{array}{|c|c|c|c|}
\cline{1-4}
1 & 2 & 3 & 4\\
\cline{1-4}
\end{array}\; \nabla^{2)}_{x_1,x_3}\ric(x_2,x_4) = 0\;.
\end{equation} 
This means by the polarization formula that $\nabla^{2)}_{\xi,\xi}\ric(\xi,\xi) = 0$ for all $\xi\in V$\,.

For this, we calculate the trace of~\eqref{eq:traceless_part_1} with respect to $\{x_1,x_3\}$ and evaluate for $x_2 = x_4 = x_5 = x_6 = \xi$\,:
\begin{align*}
&\underbrace{\trace_{1,3}\,\begin{array}{|c|c|c|c|}
\cline{1-4}
1 & 3 & 5 & 6 \\
\cline{1-4}
2 & 4 \\
\cline{1-2}
\end{array}\; \nabla^{2)}_{x_5,x_6}R(x_1,x_2,x_3,x_4)|_{x_2 = x_4 = x_5 = x_ 6 = \xi}}_{\stackrel{\eqref{eq:Ricci_tensor_of_the-associated_linear_two-jet_1}}{=}
- 80\, \nabla^{2)}_{\xi,\xi}\ric(\xi,\xi)}\\
& \stackrel{!}{=} \underbrace{\frac{1}{n + 4}\,\trace_{1,3}\, \begin{array}{|c|c|c|c|}
\cline{1-4}
1 & 3 & 5 & 6 \\
\cline{1-4}
2 & 4 \\
\cline{1-2}
\end{array}\; \g{x_5}{x_6}R*R(x_1,x_2,x_3,x_4)|_{x_2 = x_4 = x_5 = x_ 6 = \xi}}_{\stackrel{\eqref{eq:Ricci_tensor_of_iotaR}}= 0}\;.
\end{align*}
We conclude that $\nabla^{2)}_{\xi,\xi}\ric(\xi,\xi) = 0$\,.\qed

Second, we will show that 
\begin{equation}\label{eq:vanishes?_2}
\forall_{x_1,x_2,x_3,x_4\in V}\,:\; \begin{array}{|c|c|c|}
\cline{1-3}
1 & 3 & 4\\
\cline{1-3}
2 \\
\cline{1-1}
\end{array}\; \nabla^{2)}_{x_1,x_2}\ric(x_3,x_4) = \begin{array}{|c|c|c|}
\cline{1-3}
1 & 2 & 4\\
\cline{1-3}
3 \\
\cline{1-1}
\end{array}\; \nabla^{2)}_{x_1,x_2}\ric(x_3,x_4) = 0\;.
\end{equation}
Thus it suffices to show that  $\nabla^{2)}_{\xi,x}\ric(\xi,\xi) = \nabla^{2)}_{\xi,\xi}\ric(x,\xi) = 0$ for all $x,\xi$\,:

we calculate the trace of~\eqref{eq:traceless_part_1} with respect to $\{x_1,x_3\}$ and evaluate for  $x_2 = x_4 = x_5 = \xi$\,, $x_6 = x$\,:
\begin{align*}
&\underbrace{\trace_{1,3}\,\begin{array}{|c|c|c|c|}
\cline{1-4}
1 & 3 & 5 & 6 \\
\cline{1-4}
2 & 4 \\
\cline{1-2}
\end{array}\; \nabla^{2)}_{x_5,x_6}R(x_1,x_2,x_3,x_4)|_{x_2 = x_4 = x_5 = \xi\,,\ x_6=x}}_{\stackrel{\eqref{eq:Ricci_tensor_of_the-associated_linear_two-jet_1},\ R\cdot \ric = 0}{=} 80\, \nabla^{2)}_{\xi,x}\ric(\xi,\xi)}\\
& \stackrel{!}{=}  \underbrace{\frac{1}{n + 4}\,\trace_{1,3}\, \begin{array}{|c|c|c|c|}
\cline{1-4}
1 & 3 & 5 & 6 \\
\cline{1-4}
2 & 4 \\
\cline{1-2}
\end{array}\; \g{x_5}{x_6}R*R(x_1,x_2,x_3,x_4)|_{x_2 = x, x_4 = x_5 = x_6 = \xi}}_{\stackrel{\eqref{eq:Ricci_tensor_of_iotaR}}= 0}\;.
\end{align*}
We conclude that $\nabla^{2)}_{\xi,x}\ric(\xi,\xi) = 0$\,. The conclusion that $\nabla^{2)}_{\xi,\xi}\ric(x,\xi) = 0$ is derived in the same way.\qed

Last we have to show that 
\begin{equation}\label{eq:vanishes?_3}
\forall_{x_1,x_2,x_3,x_4\in V}\,:\; \begin{array}{|c|c|}
\cline{1-2}
1 & 3 \\
\cline{1-2}
2 & 4 \\
\cline{1-2}
\end{array}\; \nabla^{2)}_{x_1,x_3}\ric(x_2,x_4) = 0\;.
\end{equation}

For this, we calculate the trace of~\eqref{eq:traceless_part_1} with respect to $\{x_5,x_6\}$ to see that
\begin{align*}
&\underbrace{\trace_{5,6}\,\begin{array}{|c|c|c|c|}
\cline{1-4}
1 & 3 & 5 & 6 \\
\cline{1-4}
2 & 4 \\
\cline{1-2}
\end{array}\; \nabla^{2)}_{x_5,x_6}R(x_1,x_2,x_3,x_4)}_{\stackrel{\eqref{eq:Laplace_der_symmetrisierten_zweiten_kovarianten_Ableitung}}{=} -80\, \nabla^*\nabla R(x_1,x_2,x_3,x_4) - 16\, R*R(x_1,x_2,x_3,x_4)} \stackrel{!}{=} \underbrace{\frac{1}{n + 4}\trace_{5,6}\, \begin{array}{|c|c|c|c|}
\cline{1-4}
1 & 3 & 5 & 6 \\
\cline{1-4}
2 & 4 \\
\cline{1-2}
\end{array}\; \g{x_5}{x_6}R*R(x_1,x_2,x_3,x_4)}_{\stackrel{\eqref{eq:Rough_Laplacian_of_iotaR}}= 24\, R*R(x_1,x_2,x_3,x_4)}\;.
\end{align*}
We conclude that $\nabla^*\nabla R +\frac{1}{2} R*R = 0$\,. Using the special case of the Weitzenböck formula~\eqref{eq:Weitzenböck_spezial}, we see that~\eqref{eq:vanishes?_2} vanishes.
\end{proof}

\section{Proof of Corollary~\ref{co:main}}
Let $V$ be a pseudo-euclidean space with $n := \dim(V)$\,. Recall from Sec.~\ref{se:reformulation} that $\scrC_k\otimes \C$  is an
irreducible representation of $\sl(V,\C)$ of highest weight $(k + 2)\,L_1 +
2\, L_2$\,. The subset $[k+2,2]\subset \scrC_k$ given by the totally trace-free tensors is
a representation of $\so(V)$\,. According to~\cite[Thm.~19.22]{FH}, the complexification $[k+2,2]\otimes\C$  is
\begin{itemize}
\item an irreducible representation  of $\so(V,\C)$ with highest weight $(k + 2)\, L_1 + 2\, L_2$ (if $n\geq 5$), 
\item the sum of two irreducible representations of $\so(V,\C)$ with highest weights $(k + 2)\, L_1 \pm 2\, L_2$ (if $n = 4$),  
\item or $[k + 2,2] = \{0\}$ (if $n\leq 3$)\;.
\end{itemize}
This is called Weyls construction for orthogonal groups. Now we can give the proof of Corollary~\ref{co:main}:

\begin{proof}
Suppose that~\eqref{eq:Relation_of_order_two} holds. By means of Lemma~\ref{le:linear_jacobi_relation_alternativ},
\begin{equation}\label{eq:traceless_part_2}
\trace_{1,3}\; \begin{array}{|c|c|c|c|}
\cline{1-4}
1 & 3 & 5 & 6 \\
\cline{1-4}
2 & 4 \\
\cline{1-2}
\end{array}\;\big ( \nabla^{2)} R_{x_5,x_6}(x_1,x_2,x_3,x_4) - c\,\g{x_5}{x_6}R(x_1,x_2,x_3,x_4) \big ) = 0\;.
\end{equation} 
Using additionally~\eqref{eq:traceless_part_1} and substituting $R*R = - 2\nabla^*\nabla R$ according to~\eqref{eq:Weitzenböck_spezial_fuer_Einstein}, we thus see that
\begin{equation}\label{eq:traceless_part_3}
\trace_{1,3}\; \begin{array}{|c|c|c|c|}
\cline{1-4}
1 & 3 & 5 & 6 \\
\cline{1-4}
2 & 4 \\
\cline{1-2}
\end{array}\;\big (\frac{2}{n + 4}\g{x_5}{x_6}\nabla^*\nabla R_{x_5,x_6}(x_1,x_2,x_3,x_4) + c\, \g{x_5}{x_6}R(x_1,x_2,x_3,x_4) \big ) = 0\;.
\end{equation} 
Further, recall from~\eqref{eq:decomposition_of_algebraic_curvature_tensors} that on the one hand there is a decomposition into $\SO(V)$-modules
\[
\scrC_0 = [0] \oplus [2] \oplus [2,2]\;.
\]
Let $[4,2]\subset\scrC_2$ denote the subset of totally trace-free tensors. Since the embedding $\iota$ defined in~\eqref{eq:iota} is clearly $\SO(V)$-equivariant, Schurs Lemma in combination with~\eqref{eq:decomposition_of_algebraic_curvature_tensors} implies that $[4,2]\cap \iota(\scrC_0)
=\{0\}$\,. Hence $\hat \nabla^{2)} \hat R$ is completely determined by its traces and then by $\trace_{1,3}$ as was shown in Lemma~\ref{le:hirachy}.
We conclude from~\eqref{eq:traceless_part_3} that
\begin{equation}
\frac{2}{n + 4}\nabla^*\nabla R_{x_5,x_6}(x_1,x_2,x_3,x_4) + c\, R(x_1,x_2,x_3,x_4) = 0\;.
\end{equation}
Thus $\nabla^*\nabla R = -\frac{(n + 4)c}{2} R$\,, which finishes the proof of~\eqref{eq:main}.
\end{proof}

\section{Concluding remarks}
Let $(V,\gg)$ be a pseudo-euclidean vector space. It is known from the theory
of partial differential equations that an algebraic $k$-jet is Einstein if and
only if it is actually the $k$-jet of the curvature tensor of some Einstein metric defined in a
neighborhood of the origin of $V$ (for $k=0$ see~\cite{Ga}.) It follows (in complete analogy to the Jet Isomorphism Theorem) that the space of Einstein $k$-jets is an affine vector bundle over the space of Einstein $k-1$-jets with direction space $[k+2,2]$\,, the totally traceless part of $\scrC_{k}$\,. 

However, to the authors best knowledge, this existence result does not give a hint how to construct the Einstein metric explicitly from the given $k$-jet. 
Now Theorem~\ref{th:main_alternativ} points exactly into that direction. Namely, it tells us how to extend a given Einstein one-jet $(R,\nabla R)$ to an Einstein two-jet in an explicit way:

we may extend $(R,\nabla R)$ to some two-jet $(R,\nabla R,\nabla^{2)}R)$\,. In
fact, 
\[
\Id - \frac{1}{3}\scrR - \frac{1}{6} \scrR^{1)}:\xi\mapsto \Id -
\frac{1}{3}R(\,\cdot\,,\xi,\xi) - \frac{1}{6} \nabla_\xi
R(\,\cdot\,,\xi,\xi)\in \End_+(V)
\]
is a polynomial of degree three with values in the symmetric endomorphisms of
$V$\,. Using the inner product $\gg$ in order to identify symmetric
endomorphisms with symmetric bilinear forms, we obtain a metric defined in
a neighbourhood of the origin which has the prescribed two-jet $(R,\nabla R)$\,. 
Next, we remove the traces of~\eqref{eq:traceless_part_1} via adding a suitable $\hat\nabla^{2)} \hat R\in\scrC_2$ (which is the analogue of finding the Weyl part of an algebraic curvature tensor.)
Then $(R,\nabla R,\nabla^{2)}R + \frac{1}{80}\hat\nabla^{2)} \hat R)$ is an Einstein two-jet.

It seems reasonable that similar ideas also work for higher $k$-jets (i.e. $k\geq 3)$\,.

\end{document}